\newtheorem{theorem}{Theorem}[section]
\newtheorem{lemma}[theorem]{Lemma}
\newtheorem{proposition}[theorem]{Proposition}
\newtheorem{corollary}[theorem]{Corollary}
\newtheorem{conjecture}[theorem]{Conjecture}
\newtheorem{claim}[theorem]{Claim}
\newtheorem*{thm:main}{Theorem~\ref{thm:main}}
\newenvironment{clproof}{\begin{list}{}{%
              \setlength{\leftmargin}{5mm}%
              } \item {\it Proof.} }{\hfill$\lozenge$\end{list}\medskip}
\newcommand\abs[1]{\lvert #1\rvert}
\newcommand{\pivot}{\wedge}
\def\K_#1{{K_{#1}}}
\def\S_#1{\overline{K_{#1}}}
\begin{document}
\title{Chi-boundedness of graph classes excluding wheel vertex-minors}

\author[1]{Hojin Choi}

\affil[1]{Department of Mathematical Sciences, KAIST,  Daejeon, South Korea.}

\author[2]{O-joung Kwon\thanks{Previous affiliation : Logic and Semantics, Technische Universit\"at Berlin, Berlin, Germany.
Supported by the European Research Council (ERC) under the European Union's Horizon 2020 research and innovation programme (ERC consolidator grant DISTRUCT, agreement No. 648527).}}

\affil[2]{Department of Mathematics, Incheon National University, Incheon, South Korea.}

\author[1]{Sang-il Oum\thanks{Supported by the National Research Foundation of Korea (NRF) grant funded by the Korea government (MSIT) (No. NRF-2017R1A2B4005020).}}

\author[3]{Paul Wollan\thanks{Supported by the European Research Council (ERC) under the European Union's Seventh Framework Programme (FP7/2007-2013)/ERC Grant Agreement no. 279558.}}
\affil[3]{Department of Computer Science, University of Rome, ``La Sapienza'', Rome, Italy.}

\date\today
\maketitle

\footnotetext{E-mail addresses: \texttt{hjchoi0330@gmail.com} (H. Choi), \texttt{ojoungkwon@gmail.com} (O. Kwon), \texttt{sangil@kaist.edu} (S. Oum), \texttt{paul.wollan@gmail.com} (P. Wollan)}

\begin{abstract}
A class of graphs is \emph{$\chi$-bounded} if there exists a function
$f:\mathbb N\rightarrow \mathbb N$ such that for every graph $G$ in
the class and an induced subgraph $H$ of $G$, if $H$ has no clique of
size $q+1$, then the chromatic number of
$H$ is less than or equal to $f(q)$.
We denote by $W_n$ the wheel graph on $n+1$ vertices.
We show that the class of graphs having no vertex-minor isomorphic to $W_n$ is $\chi$-bounded. 
This generalizes several previous results; $\chi$-boundedness for circle graphs, for graphs having no $W_5$ vertex-minors, and for graphs having no fan vertex-minors.
\end{abstract}

\section{Introduction}

All graphs in this paper are simple and undirected. A \emph{clique} of
a graph is
a set of pairwise adjacent vertices. The \emph{clique
  number} of a graph $G$, denoted by $\omega(G)$, is the maximum
number of vertices in a clique in $G$. We denote the chromatic number of a graph $G$ by $\chi(G)$. 

Gy\'{a}rf\'{a}s~\cite{Gyarfas1987} introduced the concept of a $\chi$-bounded class of graphs. A class $\mathcal C$ of graphs is \emph{$\chi$-bounded} if there exists a function $f:\mathbb N\rightarrow \mathbb N$ such that for every graph $G\in \mathcal C$ and an induced subgraph $H$ of $G$, $\chi(H)\le f(\omega(H))$. 
Such a function $f$ is called a \emph{$\chi$-bounding function}. 
Gy\'{a}rf\'{a}s~\cite{Gyarfas1987} proved that for every positive integer $k$, the class of graphs with no induced path of length $k$ is $\chi$-bounded. 

A \emph{vertex-minor} of a graph $G$ is an induced subgraph of a graph
that can be obtained from $G$ by a sequence of \emph{local
  complementations} \cite{Bouchet1987a,Bouchet1987b,Bouchet1988,Bouchet1989a,Bouchet1990,Oum2004}.
The precise
definition will be presented in Section~\ref{sec:prelim}.

As graph minors are motivated by the study of planar graphs, 
one of the major motivations to study vertex-minors is due to its
close relation to circle graphs. 
\emph{Circle graphs} are intersection graphs of chords on a
circle. Vertex-minors of a circle graph are circle graphs, as local
complementations preserve the property of being circle graphs.

Gy\'{a}rf\'{a}s~\cite{Gyarfas19851, Gyarfas19852} proved the following theorem.
\begin{theorem}[Gy\'{a}rf\'{a}s~{\cite{Gyarfas19851, Gyarfas19852}}]\label{thm:circlegraph}
The class of circle graphs is $\chi$-bounded.
\end{theorem}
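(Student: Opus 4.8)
The plan is to work with the chord model directly and induct on the clique number. Cut the circle at a point avoiding all chord ends; each chord $v$ then becomes an interval $I_v=[l_v,r_v]$ on a line, and $u$ and $v$ are adjacent in $G$ exactly when $I_u$ and $I_v$ \emph{properly overlap}: they intersect but neither contains the other. So it suffices to show that every interval overlap graph $G$ satisfies $\chi(G)\le f(\omega(G))$ for a suitable $f$, and we do this by induction on $\omega=\omega(G)$ (the case $\omega\le 1$ being trivial).

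The engine is that several natural ``coherent'' sub-families of intervals induce perfect graphs and hence need only $\omega$ colours. Two instances suffice. (i) If a set of intervals pairwise intersects, then since pairwise intersecting intervals on a line have a common point, adjacency inside the set is exactly incomparability under containment; so the overlap graph is a co-comparability graph, hence perfect. (ii) The chords crossing a fixed chord $c$ each have one end on each side of $c$, and reading off the two linear orders of these ends exhibits their overlap graph as a permutation graph, again perfect. In both cases the subgraph has chromatic number at most its clique number, so at most $\omega$.

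For the recursion, let $I^*$ be the interval with leftmost left endpoint and split $V(G)\setminus\{I^*\}$ into $A$ (intervals contained in $I^*$), $B$ (intervals properly overlapping $I^*$, which then all contain the right endpoint of $I^*$) and $C$ (intervals disjoint from $I^*$, necessarily lying to its right). By (i), $G[B\cup\{I^*\}]$ is perfect and uses at most $\omega$ colours; and since every interval of $A$ ends before every interval of $C$ begins, there is no edge between $A$ and $C$. Hence $\chi(G)\le \omega+\max\bigl(\chi(G[A]),\chi(G[C])\bigr)$, and one iterates the same decomposition inside $G[A]$ and inside $G[C]$. Done naively this recurses only on the number of chords; the role of the inductive hypothesis is to collapse it --- peeling off ``perfect slabs'' along the right-hand spine $C,C',\dots$ and down the nested left-hand remnants while re-using a bounded palette --- and to argue, using that $\omega$ is bounded, that only $O\bigl(f(\omega-1)\bigr)$ many palettes (times an $O(\omega)$ overhead) are ever required. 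This yields a recursion of the form $f(\omega)\le c\,\bigl(f(\omega-1)+\omega\bigr)$, and hence an exponential $\chi$-bounding function of the kind in Gy\'{a}rf\'{a}s's original argument.

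The main obstacle is precisely that collapsing step: a single long interval can properly overlap arbitrarily many pairwise non-adjacent (nested) intervals, so the perfect slabs produced by the decomposition do not obviously interact in a bounded way and the palettes need not collapse on their own. Making this rigorous requires choosing the decomposition more carefully --- for instance running the peeling as a breadth-first search from a well-chosen chord and showing that, even though a single search layer is in general neither perfect nor an interval overlap graph of smaller clique number, it can still be split into boundedly many coherent pieces of the two kinds above --- and it is here that the bound on $\omega$ must be spent. The remainder is bookkeeping: combining layer colourings with alternating palettes, and treating disconnected graphs component by component.
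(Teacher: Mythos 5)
The paper itself does not prove Theorem~\ref{thm:circlegraph}; it is a black-box citation to Gy\'arf\'as, and the paper only notes that its main theorem re-derives it since $W_5$ is not a circle graph. So your argument must stand on its own, and as written it has a real gap that you flag but do not close.

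The decomposition around the interval $I^*$ with leftmost left endpoint is fine as far as it goes: every interval properly overlapping $I^*$ contains the right endpoint $r^*$ of $I^*$, so $B\cup\{I^*\}$ is a pairwise-intersecting family of intervals, hence a co-comparability graph, hence perfect and $\omega$-colorable; and $A$ and $C$ are anticomplete since every member of $A$ ends before $r^*$ and every member of $C$ begins after it. This yields $\chi(G)\le\omega+\max\bigl(\chi(G[A]),\chi(G[C])\bigr)$. But $A$ and $C$ are again interval-overlap graphs whose clique number may still equal $\omega$; nothing in this decomposition drops the clique number, so the induction on $\omega$ announced at the start is never actually invoked. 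Iterating the inequality literally gives only $\chi(G)\le\omega\cdot d$ with $d$ the recursion depth, which can grow with $\abs{V(G)}$ and is therefore not a $\chi$-bound. The asserted recursion $f(\omega)\le c\,(f(\omega-1)+\omega)$ is precisely what a correct proof must establish, and your own final paragraph concedes that the ``collapsing'' step --- the BFS from a well-chosen chord, the bounded palette reuse, the reduction to boundedly many coherent pieces of strictly smaller clique number --- is an unresolved obstacle rather than a worked-out argument. Until some subinstance of strictly smaller clique number is actually exhibited and the colorings of the perfect slabs are shown to interact in a manner controlled by a function of $\omega$ alone, the bound $\chi\le f(\omega)$ does not follow from what is written.
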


Dvo\v{r}\'{a}k and  Kr\'{a}l'~\cite{DvorakK2012} proved that
graphs of rank-width at most $k$ are also $\chi$-bounded and it is
also the case that the class of graphs of rank-width at most $k$ is
closed under taking vertex-minors. 

These motivate the following conjecture of Geelen (see~\cite{DvorakK2012}).
\begin{conjecture}[Geelen~{(see \cite{DvorakK2012})}]\label{conj:geelen}
For every graph $H$, the class of graphs with no $H$ vertex-minor is $\chi$-bounded.   
\end{conjecture}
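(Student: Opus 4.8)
The plan is to deduce Conjecture~\ref{conj:geelen} from a rough structure theorem for the class $\cC_H$ of graphs with no vertex-minor isomorphic to $H$, playing the role that the Robertson--Seymour graph minor structure theorem plays for proper minor-closed classes of graphs, and that the Geelen--Gerards--Whittle structure theorem plays for minor-closed classes of matroids over a fixed finite field. Since $\cC_{H'}\subseteq\cC_H$ whenever $H'$ is a vertex-minor of $H$, enlarging $H$ only strengthens the conjecture, so it suffices to prove it for $H$ ranging over any family cofinal in the vertex-minor order; in particular one may assume $H$ is connected, since every graph is an induced subgraph of a connected graph (join one new vertex to one vertex of each component). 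The target structure theorem should read: for every graph $H$ there is a constant $k=k(H)$ such that each $G\in\cC_H$ admits a tree-like decomposition along separations of cut-rank at most $k$ in which every torso lies in one of a few basic classes --- graphs of rank-width at most $k$, and graphs obtained from circle graphs by adding at most $k$ ``apex'' vertices and attaching bounded-depth path-like ``vortices''. (An essentially equivalent route passes to Bouchet's isotropic systems, in which vertex-minors become minors of an object close to binary matroids, and ports the matroid structure machinery.)

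Granting such a structure theorem, two further steps complete the proof. First, the basic classes are $\chi$-bounded: graphs of rank-width at most $k$ by Dvo\v{r}\'ak and Kr\'al'~\cite{DvorakK2012}, and bounded-apex, bounded-vortex modifications of circle graphs by Theorem~\ref{thm:circlegraph} together with the easy fact that $\chi$-boundedness is preserved under adding boundedly many vertices and under bounded-depth path-like attachments. Second --- and this is where genuinely new ideas are needed --- one must show that $\chi$-boundedness survives the tree-like assembly along cut-rank-$k$ separations. In contrast with a separation of bounded order, a cut of bounded rank can have unboundedly many vertices on each side, so the clique-sum arguments used to show that $\chi$-bounded classes are closed under small-order amalgamation do not carry over. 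The natural substitute is that across a cut of rank $r$ the two sides see each other through at most $2^{r}$ ``types'': partition each side by type, refine the colour palette of each torso accordingly, and glue the colourings greedily along the decomposition tree, paying only a factor bounded in terms of $k$. The delicate point is to keep the clique number under control throughout, since a clique of $G$ may be spread across many torsos.

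The main obstacle, and the reason the conjecture remains open, is the structure theorem: proving that the exclusion of a single vertex-minor forces such a decomposition is a project of roughly the magnitude of the matroid minors project, and it is presently available only in restricted regimes --- when excluding $H$ already bounds the rank-width, so that \cite{DvorakK2012} alone applies, and when $H$ is a wheel or a fan, which is the case proved by Theorem~\ref{thm:main} and its predecessors. It is worth stressing that this latter regime is not merely a bounded-rank-width phenomenon: since $W_5$ is not a circle graph (by Bouchet's excluded vertex-minors for circle graphs), the class $\cC_{W_5}$ contains every circle graph, so Theorem~\ref{thm:circlegraph} is genuinely invoked and the ``circle graph layer'' of the conjectural structure is already present, in a highly constrained form. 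Accordingly the natural next milestones are: to prove the gluing lemma of the previous paragraph unconditionally, since it is of independent interest; and to extend the structural analysis beyond wheels and fans to graphs $H$ whose exclusion leaves room, alongside circle graphs, for genuine bounded-rank-width torsos attached to them --- the first situation in which the full ``tree of bounded-cut-rank separations'', rather than a single global bound, becomes unavoidable.
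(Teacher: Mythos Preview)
The statement you are addressing is Conjecture~\ref{conj:geelen}, which the paper presents as an \emph{open conjecture}, not a theorem; the paper does not prove it, and indeed explicitly states that only the cases $H=W_5$, $H$ a fan, and now $H=W_n$ are known. So there is no ``paper's own proof'' to compare against.

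Your proposal is not a proof either, and to your credit you say so: you explicitly write that ``the conjecture remains open'' and that the required structure theorem ``is presently available only in restricted regimes.'' What you have written is a speculative research programme --- deduce the conjecture from a hypothetical vertex-minor analogue of the graph minors / matroid minors structure theorem, then verify $\chi$-boundedness of the basic classes and of their amalgams along bounded-cut-rank separations. None of the three ingredients is established: the structure theorem does not exist in the literature, the ``gluing lemma'' for $\chi$-boundedness across bounded-cut-rank decompositions is not proved (and your sketch of refining colours by $2^r$ types does not address why cliques stay controlled, as you yourself flag), and the claim that bounded-vortex modifications of circle graphs remain $\chi$-bounded would itself need an argument. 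As written, this is a plausible heuristic outline, not a proof, and it cannot be accepted as such.

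It is also worth noting that the paper's approach to the special case $H=W_n$ (Theorem~\ref{thm:main}) bears no resemblance to your proposed structure-theorem route: it proceeds by a direct Ramsey-type argument --- iterated levelings, the regular partition lemma, the rectangular Ramsey lemma, and explicit vertex-minor reductions via drums, clams, and hanging ladders --- with no decomposition into torsos and no appeal to a global structural description of $\cC_{W_n}$.
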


Conjecture~\ref{conj:geelen} is known to be true for the following cases. Here, for an integer $k\ge 3$, a \emph{wheel graph} $W_k$ is a graph that consists of an induced cycle on $k$ vertices and an additional vertex adjacent to all vertices of the induced cycle. 
\begin{enumerate}[(I)]
\item Conjecture~\ref{conj:geelen} is true if $H$ is a vertex-minor of $W_5$,  as shown by Dvo\v{r}\'{a}k
  and Kr\'{a}l'~\cite{DvorakK2012}.  Bouchet~\cite{Bouchet1994} proved
  that a graph is a circle graph if and only if the graph has none of
  $W_5, W_7$, and $F_7$ as a vertex-minor, where $F_7$ is the (unique) $7$-vertex
  bipartite graph such that $F_7-v$ is a cycle of length 6 for some vertex
  $v$ of degree $3$.
  Geelen~\cite{geelen1995} gave a decomposition theorem of
  graphs with no $W_5$ vertex-minor, using circle graphs as one of the
  building blocks by applying a theorem of Bouchet. Dvo\v{r}\'{a}k and
  Kr\'{a}l'~\cite{DvorakK2012} used the decomposition theorem of Geelen and Theorem~\ref{thm:circlegraph} to prove that the class of graphs with no $W_5$ vertex-minor is $\chi$-bounded. 
\item Conjecture~\ref{conj:geelen} is true if $H$ is a vertex-minor of a fan graph (a fan graph is a graph
  obtained from the wheel graph by removing a vertex of degree $3$),
  as shown by I.~Choi, Kwon, and Oum~\cite{ChoiKO2016}. 

  This implies that Conjecture~\ref{conj:geelen} is true for all $H$ such that $H$ is a cycle, as every cycle is a vertex-minor of a sufficiently large fan graph.  For such $H$, the conjecture also follows from a recent theorem of Chudnovsky, Seymour, and
  Scott~\cite{ChudnovskySS2016}, proving a conjecture of Gy\'{a}rf\'{a}s that the class of graphs
  having no induced cycles of length at least $k$ is $\chi$-bounded
  for all $k$.
\end{enumerate}

We prove Conjecture~\ref{conj:geelen} for $H = W_k$ for all $k \ge 3$,  thus implying 
both (I) and (II).
\begin{thm:main}
For every integer $n\ge 3$, the class of graphs with no  $W_n$ vertex-minor is $\chi$-bounded.
\end{thm:main}

Our theorem also provides an alternative proof of  Theorem~\ref{thm:circlegraph}, as $W_n$ is not a circle graph for $n\ge 5$. 
Of course, (I) implies Theorem~\ref{thm:circlegraph}, but the proof of (I) by Dvo\v{r}\'{a}k and Kr\'{a}l'~\cite{DvorakK2012} depends on Theorem~\ref{thm:circlegraph}.
Moreover, (II) does not imply
Theorem~\ref{thm:circlegraph} since a fan graph is a circle graph.

The paper is organized as follows.  
Section~\ref{sec:prelim} provides some preliminary concepts. 
Section~\ref{sec:overview} gives a high level overview of the proof of our main theorem. 
 Section~\ref{sec:regpartlem} presents a lemma that will help us to arrange finite sets of reals. 
Section~\ref{sec:ramsey} proves a variant of Ramsey's theorem.
In Sections~\ref{sec:manufacturingwheel} and~\ref{sec:patchedpath}, we explain how to obtain a wheel graph from several large graphs as a vertex-minor.
We prove our main theorem in Section~\ref{sec:mainthm}.

\section{Preliminaries}\label{sec:prelim}

For a graph $G$, let $V(G)$ and $E(G)$ denote the vertex set and the edge set of $G$, respectively. 
Let $G$ be a graph.
For $S\subseteq V(G)$, we denote by $G[S]$ the subgraph of $G$ induced by $S$. 
For $v\in V(G)$ and $S\subseteq V(G)$, we denote by $G- v$ the graph obtained from $G$ by removing $v$, and by $G- S$ the graph obtained by removing all vertices in $S$. 
For $F\subseteq E(G)$, we denote by $G- F$ the graph with vertex set $V(G)$ and edge set $E(G) \setminus F$.  
For $v\in V(G)$, the set of \emph{neighbors} of $v$ in $G$ is denoted by $N_G(v)$, and the \emph{degree} of $v$ is the size of $N_G(v)$.
For $S\subseteq V(G)$, we denote by $N_G(S)$ the set of vertices in $V(G)\setminus S$ having a neighbor in $S$.
For an edge $e$ of a graph $G$, we denote by $G/e$ the graph obtained by contracting $e$. 
Note we are only considering simple graphs, so we delete any parallel edges which arise from contracting an edge.  A graph $H$ is a \emph{subdivision} of $G$ if $H$ can be obtained from $G$ by replacing each edge $vw$ by a path with at least one edge whose end vertices are $v$ and $w$.

For a vertex $v$ in a graph $G$, to perform \emph{local complementation} at $v$, 
replace the subgraph of $G$ induced on $N_G(v)$ 
by its complement graph. 
We write $G*v$ to denote the graph obtained from $G$ by applying
local complementation at $v$.
Two graphs $G$ and $H$ are \emph{locally equivalent} if $G$ can be obtained from $H$ by a sequence of local complementations.
A graph $H$ is a \emph{vertex-minor} of a graph $G$
if $H$ is an induced subgraph of a graph which is locally equivalent to $G$.

For an edge $uv$ of a graph $G$, to \emph{pivot} the edge $uv$ in $G$, denoted $G\pivot uv$, perform the series of local complementations $G*u*v*u$.
Note that $G\pivot uv$ is identical to the graph obtained from $G$ by flipping the adjacency relation between every pair of vertices $x$ and $y$ where $x$ and $y$ are contained in distinct sets of $N_G(u)\setminus (N_G(v)\cup \{v\})$, $N_G(v)\setminus (N_G(u)\cup \{v\})$, and $N_G(u)\cap N_G(v)$, and finally swapping the labels of $u$ and $v$.  To \emph{flip} the adjacency relation between two vertices, we delete the edge if it exists and add it otherwise. 

For a vertex $v$ of a graph $G$ with exactly two neighbors $v_1$ and $v_2$ that are non-adjacent, the series of operations $(G*v)-v$ is called \emph{smoothing} the vertex $v$. The resulting graph is equivalently the graph obtained by contracting an edge incident with $v$.
Note that if $H$ is a subdivision of $G$, then $G$ is a vertex-minor of $H$ because we can construct $G$ from $H$ by repeatedly smoothing vertices.

We describe another type of contraction that creates a graph isomorphic to  a vertex-minor of the original graph.
For a vertex set $S$ of a graph $G$ where $G[S]$ is connected, we denote by $G/S$ the graph obtained by contracting all edges in $G[S]$.
Thus, all vertices in $S$ are identified to one vertex in $G/S$. In general, $G/S$ is not a vertex-minor of $G$; the following lemma describes a situation, which will be useful in the coming arguments, where $G/S$ is isomorphic to a vertex-minor of $G$.

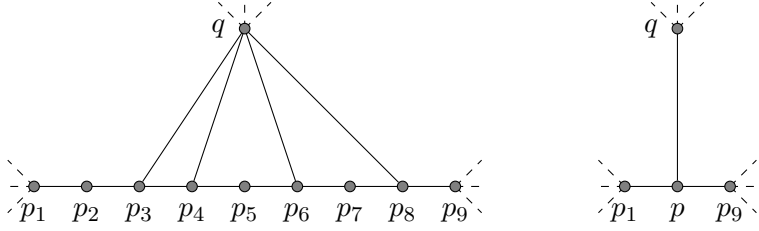
\begin{figure}
  \centering
  \begin{tikzpicture}[scale=0.7]
    \tikzstyle{w}=[circle,draw,fill=black!50,inner sep=0pt,minimum width=4pt]

   \draw (1,0)--(9,0);
   \draw[dashed] (5,3)--(5.5,3.5);
   \draw[dashed] (5,3)--(5,3.5);
   \draw[dashed] (5,3)--(4.5,3.5);

   \draw[dashed] (1,0)--(1-.5,.5);
   \draw[dashed] (1,0)--(1-.5,0);
   \draw[dashed] (1,0)--(1-.5,-.5);

   \draw[dashed] (9,0)--(9.5,.5);
   \draw[dashed] (9,0)--(9.5,0);
   \draw[dashed] (9,0)--(9.5,-.5);
   
 \foreach \y in {1,...,9}{
      \draw (\y,0) node [w] (a\y) {};
     \node at (\y, -.5) {$p_{\y}$};
     }
 \foreach \y in {3,4,6, 8}{
	\draw (5,3)--(a\y);
    }

      \draw (5,3) node [w] (x) {};
     \node at (4.5, 3) {$q$};

   \end{tikzpicture}\qquad\qquad
  \begin{tikzpicture}[scale=0.7]
    \tikzstyle{w}=[circle,draw,fill=black!50,inner sep=0pt,minimum width=4pt]

   \draw (4,0)--(6,0);
   \draw[dashed] (5,3)--(5.5,3.5);
   \draw[dashed] (5,3)--(5,3.5);
   \draw[dashed] (5,3)--(4.5,3.5);

   \draw[dashed] (4,0)--(4-.5,.5);
   \draw[dashed] (4,0)--(4-.5,0);
   \draw[dashed] (4,0)--(4-.5,-.5);

   \draw[dashed] (6,0)--(6.5,.5);
   \draw[dashed] (6,0)--(6.5,0);
   \draw[dashed] (6,0)--(6.5,-.5);
   
 \foreach \y in {4,...,6}{
      \draw (\y,0) node [w] (a\y) {};
     }
     \node at (4, -.5) {$p_1$};
     \node at (5, -.5) {$p$};
     \node at (6, -.5) {$p_9$};

 \foreach \y in {5}{
	\draw (5,3)--(a\y);
    }

      \draw (5,3) node [w] (x) {};
     \node at (4.5, 3) {$q$};

   \end{tikzpicture}
     \caption{The graph $G$ and a contraction $G/\{p_2, p_3, \ldots, p_8\}$.}\label{fig:contraction}
\end{figure}

\begin{lemma}\label{lem:fancontract}
Let $m\ge 4$ be an integer. Let $G$ be a graph and let $\{p_1, \ldots, p_m\}\cup \{q\}$ be a vertex set of $G$ such that
\begin{itemize}
\item $p_1p_2 \cdots p_m$ is an induced path in $G$, 
\item there are no edges between $\{p_2, \ldots, p_{m-1}\}$ and  $V(G)\setminus (\{p_1, \ldots, p_m\}\cup \{q\})$, 
\item $q$ has at least one neighbor in $\{p_3, \ldots, p_{m-1}\}$, and no neighbors in $\{p_1, p_2, p_m\}$. 
\end{itemize}
Then $G/\{p_2, p_3, \ldots, p_{m-1}\}$ is isomorphic to a vertex-minor of $G$. 
\end{lemma}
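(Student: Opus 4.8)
The plan is to produce a graph $G'$ locally equivalent to $G$ in which some internal vertex of the path is adjacent to all of $p_1$, $p_m$, $q$, and then read off the contraction as an induced subgraph of $G'$. Throughout, set $S=\{p_2,\dots,p_{m-1}\}$ and call a local complementation \emph{admissible} if it is performed at a vertex of $S$ which is, at that moment, adjacent to at most one of $p_1,p_m,q$.

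\textbf{Step 1: admissible local complementations suffice.} Suppose $G'$ is obtained from $G$ by a sequence of admissible local complementations and that some $p_j\in S$ has $N_{G'}(p_j)\supseteq\{p_1,p_m,q\}$. A local complementation at $p_i\in S$ only changes adjacencies inside $N(p_i)$, so an induction along the sequence gives $N_{G'}(p_i)\subseteq S\cup\{p_1,p_m,q\}$ for all $i$; since an admissible complementation is never done at a vertex adjacent to two of $p_1,p_m,q$, it never flips an edge with both ends in $V(G)\setminus S$. Hence $G'$ and $G$ have the same edges inside $V(G)\setminus S$, and $N_{G'}(p_j)\cap(V(G)\setminus S)=\{p_1,p_m,q\}$. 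On the other hand, the hypotheses say that in $G$ the path $G[S]=p_2\cdots p_{m-1}$ is connected and that the only vertices outside $S$ with a neighbour in $S$ are $p_1$ (through $p_2$), $p_m$ (through $p_{m-1}$) and $q$ (through its neighbours in $\{p_3,\dots,p_{m-1}\}$); so $G/\{p_2,\dots,p_{m-1}\}$ is exactly the graph obtained from $G$ by deleting $S$ and adding one new vertex adjacent precisely to $p_1,p_m,q$. Comparing these descriptions, the map sending $p_j$ to the contracted vertex and fixing everything else is an isomorphism $G'-(S\setminus\{p_j\})\cong G/\{p_2,\dots,p_{m-1}\}$. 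Since $G'-(S\setminus\{p_j\})$ is an induced subgraph of $G'$, which is locally equivalent to $G$, the lemma follows.

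\textbf{Step 2: building $G'$.} It suffices to make $p_2$ adjacent to $q$ and to $p_m$ by admissible complementations ($p_2$ is already adjacent to $p_1$, and this survives since we never complement at a vertex adjacent to $p_1$). Let $a$ be least with $q\sim p_a$. First complement at $p_2,p_3,\dots,p_{a-1}$ in this order: these vertices are not adjacent to $q$, so the complementations are admissible (each complemented vertex meets $\{p_1,p_m,q\}$ only in $p_1$), and one checks this makes $p_2$ adjacent to $p_a$ while touching nothing outside $\{p_1,\dots,p_a\}$. Then complement at $p_a$: now $p_2,q\in N(p_a)$ and $p_2\not\sim q$, so this creates the edge $p_2q$. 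Finally, to get $p_2\sim p_m$ one keeps ``pushing'' the adjacency of $p_2$ rightward along the path, complementing at $p_{a+1},\dots,p_{m-1}$ so as to make $p_2$ successively adjacent to $p_{a+2},\dots,p_m$; the one subtlety is that before complementing admissibly at a vertex $p_k$ still adjacent to $q$, one must first cancel the edge $p_kq$ by a preliminary admissible complementation at a common neighbour of $p_k$ and $q$. (It may also help, before starting, to smooth the internal vertices lying strictly between consecutive neighbours of $q$, making those neighbours consecutive on the path.)

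The main obstacle I expect is exactly this rightward push: every complementation at a vertex adjacent to $q$ reshuffles which path vertices are adjacent to $q$, so one must order the complementations, and interleave the clean-up complementations, so as never to be forced to complement at a vertex that has become adjacent to two of $p_1,p_m,q$, and so as not to destroy the edge $p_2q$ once it is created. This is where the hypotheses do their work: the internal vertices having no neighbours outside $\{p_1,\dots,p_m,q\}$ keeps admissibility easy to verify, and $q$ missing $p_1,p_2,p_m$ makes $p_2$ a safe target and keeps the endpoints of the path undisturbed.
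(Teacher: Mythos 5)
Your Step 1 is a valid and genuinely different reduction from the paper's: you reduce the lemma to showing that some sequence of local complementations, each performed at an internal path vertex adjacent to at most one of $p_1,p_m,q$, makes a single internal vertex $p_j$ adjacent to all of $p_1,p_m,q$; deleting the remaining internal vertices then yields the contraction as an induced subgraph. That framework is sound. However, Step 2 does not establish the needed configuration, and there is a concrete error in the part you do spell out: the first phase, complementing at $p_2,p_3,\ldots,p_{a-1}$ in order, does \emph{not} preserve the edge $p_1p_2$ once $a\ge 4$. After complementing at $p_2$, the vertex $p_3$ becomes adjacent to $p_1$ as well as to $p_2$, and complementing at $p_3$ then flips the edge $p_1p_2$ and deletes it --- directly contradicting your parenthetical claim that ``we never complement at a vertex adjacent to $p_1$.'' (Admissibility protects only edges with both endpoints outside $S$; it does not protect $p_1p_2$.) The second phase, the ``rightward push'' of $p_2$'s neighbourhood toward $p_m$ interleaved with clean-up complementations, is stated as a plan rather than a proof, and you explicitly flag it as the unresolved main obstacle. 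As written, the argument has a real gap.

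For comparison, the paper's proof is a short explicit reduction rather than a search for a clever local-complementation sequence. First smooth every degree-$2$ vertex among $p_3,\ldots,p_{m-1}$, so that afterwards every internal vertex other than $p_2$ is adjacent to $q$. Then, while $m\ge 7$, complement at $p_4$ and delete it: since $N(p_4)=\{p_3,p_5,q\}$, this adds the edge $p_3p_5$ and deletes $qp_3$ and $qp_5$, making $p_3$ and $p_5$ degree-$2$ so they can be smoothed; this decreases $m$ by $3$ while preserving all hypotheses. Finally the base cases $m\in\{4,5,6\}$ are settled by explicit short sequences (smoothing $p_2$; complementing at and deleting $p_3$ then smoothing $p_4$; pivoting $p_3p_4$, deleting $p_3,p_4$ and smoothing $p_5$). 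Your Step 1 could still be useful if paired with a correct, carefully verified realization of the target configuration, but the realization is exactly what is missing.
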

\begin{proof}
Let $G':=G/\{p_2, p_3, \ldots, p_{m-1}\}$ and let $p$ be the contracted vertex in $G'$.
We depict in Figure~\ref{fig:contraction}.
 We simulate this contraction as follows:
\begin{enumerate}
\item First if there is a vertex of degree $2$ in $\{p_3, \ldots, p_{m-1}\}$, then we smooth it. 
We may assume that there are no vertices of degree $2$ in $p_3, \ldots, p_{m-1}$.
\item If $m\ge 7$, then we apply local complementation at $p_4$ and remove it. This local complementation removes edges $qp_3$ and $qp_5$, and links $p_3$ and $p_5$.
Then we smooth $p_3$ and $p_5$. By applying this procedure repeatedly, we may assume  $m\in \{4,5,6\}$.
\item If $m=4$, then we smooth $p_2$. 
If $m=5$, then we apply local complementation at $p_3$ and remove it, and then smooth $p_4$.
If $m=6$, then  we pivot $p_3p_4$ and remove $p_3$ and $p_4$, and then smooth $p_5$.
\end{enumerate}
It is not difficult to see that each resulting graph is isomorphic to $G'$.
\end{proof}

\section{Overview of the approach}\label{sec:overview}

We begin by taking a leveling of the given graph.
A sequence $L_0, L_1, \ldots, L_m$ of disjoint subsets of the vertex set of a graph $G$ is called a \emph{leveling} in $G$ if 
\begin{enumerate}
\item $\abs{L_0} = 1$, and
\item for each $i\in\{1, \ldots, m\}$, every vertex in $L_i$ has a neighbor in $L_{i-1}$, and has no neighbors in $L_j$ for all $j\in\{0, \ldots, i-2\}$.
\end{enumerate}
Each $L_i$ is called a \emph{level}. 
We can obtain a leveling that covers all vertices in a connected graph by fixing a vertex $v$, and taking $L_i$ as the set of all vertices at distance $i$ from $v$.
Given a leveling $L_0, L_1, \dots, L_m$,
if each $L_i$ can be colored by $x$ colors, then 
the whole graph can be colored by $2x$ colors, because there are no edges between $L_{i}$ and $L_{j}$ for $\abs{j-i}\ge 2$.
Therefore, starting with a connected graph with sufficiently large chromatic number, we may assume that there is some level $L_i$ that still has large chromatic number.

\begin{figure}
  \centering
  \begin{tikzpicture}[scale=0.7]
    \tikzstyle{w}=[circle,draw,fill=black!50,inner sep=0pt,minimum width=4pt]

   \draw (1,0)--(8,0);
   \draw (2.7,1.3)--(9.7,1.3);
   \draw (1,0)--(2.7,1.3);\draw(8,0)--(9.7,1.3);

        \foreach \y in {1,...,8}{
      \draw (\y,0) node [w] (c\y) {};
      \draw (\y+1.7,1.3) node [w] (d\y) {};
    }
 
 \foreach \y in {2,...,7}{
      \draw (\y+1,3) node [w] (e\y) {};
      \draw(c\y)--(e\y)--(d\y);
      \draw (5.5,4.5)--(e\y);
    }

      \draw (5.5, 4.5) node [w] (x) {};
      \node at (5.5, 5) {$v$};

   \end{tikzpicture}
    \begin{tikzpicture}[scale=0.7]
    \tikzstyle{w}=[circle,draw,fill=black!50,inner sep=0pt,minimum width=4pt]

   \draw (1,0)--(8,0);
   \draw (2.7,1.3)--(9.7,1.3);
   \draw (1,0)--(2.7,1.3);\draw(8,0)--(9.7,1.3);

        \foreach \y in {1,...,8}{
      \draw (\y,0) node [w] (c\y) {};
      \draw (\y+1.7,1.3) node [w] (d\y) {};
    }
 
 \foreach \y in {2,...,7}{
      \draw(c\y)--(5.5, 4)--(d\y);
      \draw(c\y)--(d\y);
    }   
      \draw (5.5, 4) node [w] (x) {};
      \node at (5.5, 4.5) {$v$};

   \end{tikzpicture}

  \caption{A problematic case.}\label{fig:badcase}
\end{figure}
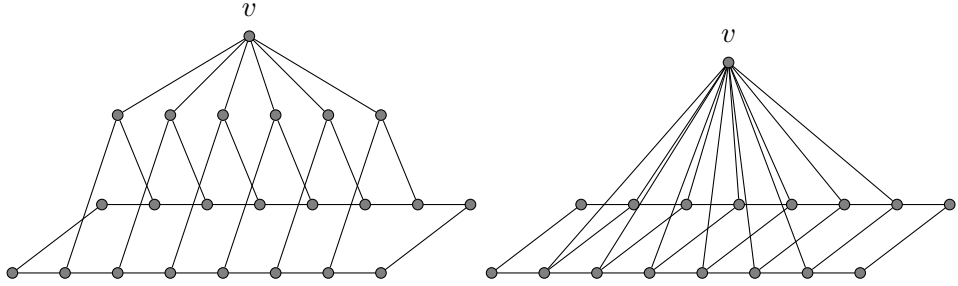

A natural approach to find a wheel vertex-minor is to find a long induced cycle in the level $L_i$ with large chromatic number, using the result by Chudnovsky, Scott, and Seymour (Theorem~\ref{thm:longcycle}), 
and then to construct a large wheel vertex-minor using the connected subgraph on $L_0\cup \cdots \cup L_{i-1}$. 
However, this strategy does not work well. 
For instance, we may find a graph depicted in the first figure of  Figure~\ref{fig:badcase}.
In this graph, if we apply local complementations to create edges from $v$ to the bottom cycle, 
then we obtain a graph obtained from a large wheel by adding some parallel chords, depicted in the right-hand figure.
At this point, it is difficult to remove these chords to finally obtain a wheel graph as a vertex-minor.

To avoid such problems, we aim to find a similar structure, but having two disjoint large independent sets having regular neighbors on the cycle. One simple example is depicted in the first figure of Figure~\ref{fig:badcasenewway}.
In this example, one independent set of vertices $w_i$ is used to create a vertex having many neighbors on the cycle, and the second set of vertices $z_j$ is used to remove the newly created chords. We depict this procedure in Figure~\ref{fig:badcasenewway}. 
Briefly speaking, to remove the chords that are newly created from $w_i$'s, we want to add new chords that does not share an end vertex with chords created from $w_i$'s, and then by pivoting these edges, we remove chords created from $w_i$'s. 
We need more involved arguments for dealing with general cases. This is one of the main procedures we will utilize to find a large wheel as a vertex-minor.

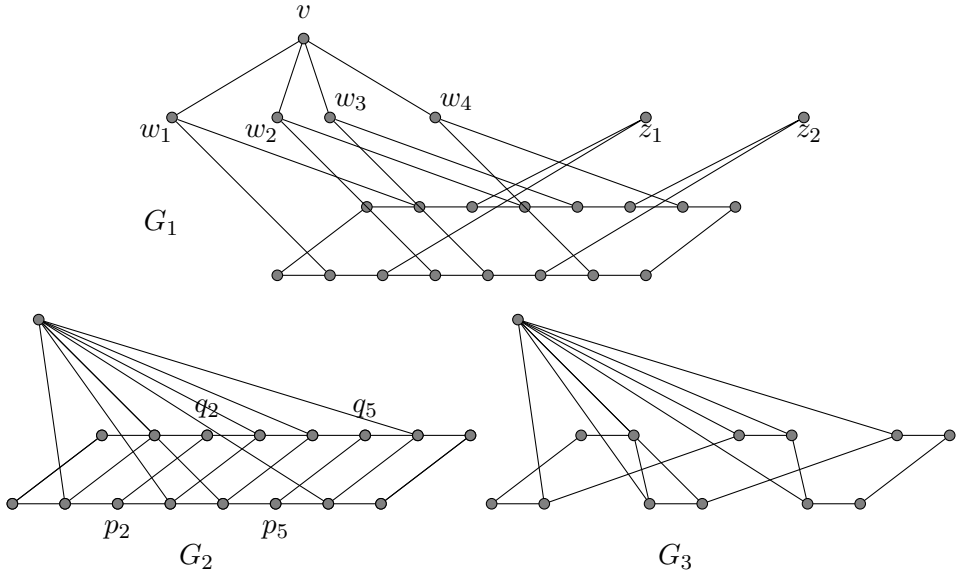
\begin{figure}
  \centering
  \begin{tikzpicture}[scale=0.7]
    \tikzstyle{w}=[circle,draw,fill=black!50,inner sep=0pt,minimum width=4pt]

   \draw (1,0)--(8,0);
   \draw (2.7,1.3)--(9.7,1.3);
   \draw (1,0)--(2.7,1.3);\draw(8,0)--(9.7,1.3);

    \foreach \y in {1,...,8}{
      \draw (\y,0) node [w] (c\y) {};
      \draw (\y+1.7,1.3) node [w] (d\y) {};
     }

 \foreach \y in {2,4,5,7}{
      \draw (\y-4+1,3) node [w] (e\y) {};
      \draw(c\y)--(e\y)--(d\y);
      \draw (1.5,4.5)--(e\y);
     }

 \foreach \y in {3,6}{
      \draw (\y+4+1,3) node [w] (f\y) {};
      \draw(c\y)--(f\y)--(d\y);
     }

      \draw (1.5, 4.5) node [w] (x) {};

      \node at (1+1.7-4, 2.7) {$w_{1}$};
      \node at (3+1.7-4, 2.7) {$w_{2}$};
      \node at (4.7+1.7-4, 3.3) {$w_{3}$};
      \node at (6.7+1.7-4, 3.3) {$w_{4}$};

      \node at (2+2.1+4, 2.7) {$z_{1}$};
      \node at (5+2.1+4, 2.7) {$z_{2}$};

      \node at (1.5, 5) {$v$};
      \node at (-1.2, 1) {$G_1$};

   \end{tikzpicture}\vskip 0.4cm
     \begin{tikzpicture}[scale=0.7]
    \tikzstyle{w}=[circle,draw,fill=black!50,inner sep=0pt,minimum width=4pt]

      \draw(1.5,3.5)--(2,0);
      \draw(1.5,3.5)--(4,0);
      \draw(1.5,3.5)--(5,0);
      \draw(1.5,3.5)--(7,0);
      \draw(1.5,3.5)--(3.7,1.3);
      \draw(1.5,3.5)--(5.7,1.3);
      \draw(1.5,3.5)--(6.7,1.3);
      \draw(1.5,3.5)--(8.7,1.3);
   
   \foreach \y in {1,...,8}{
      \draw (\y,0) node [w] (e\y) {};
      \draw (\y+1.7,1.3) node [w] (f\y) {};
     }

 \foreach \y in {1,...,8}{
      \draw (e\y)--(f\y);
     }

    \foreach \y in {2,5}{
      \node at (\y+1, -0.5) {$p_{\y}$};
      \node at (\y+2.7, 1.8) {$q_{\y}$};
    }

   \draw (1,0)--(8,0);
   \draw (2.7,1.3)--(9.7,1.3);
   \draw (1,0)--(2.7,1.3);\draw(8,0)--(9.7,1.3);

    \foreach \y in {1,...,8}{
      \draw (\y,0) node [w] (c\y) {};
      \draw (\y+1.7,1.3) node [w] (d\y) {};

     }
      \draw (1.5, 3.5) node [w] (x) {};
      \node at (4.5, -1) {$G_2$};

   \end{tikzpicture}
     \begin{tikzpicture}[scale=0.7]
    \tikzstyle{w}=[circle,draw,fill=black!50,inner sep=0pt,minimum width=4pt]

     \draw(1.5,3.5)--(2,0);
      \draw(1.5,3.5)--(4,0);
      \draw(1.5,3.5)--(5,0);
      \draw(1.5,3.5)--(7,0);
      \draw(1.5,3.5)--(3.7,1.3);
      \draw(1.5,3.5)--(5.7,1.3);
      \draw(1.5,3.5)--(6.7,1.3);
      \draw(1.5,3.5)--(8.7,1.3);
    \draw (1,0)--(2,0);\draw (4,0)--(5,0);\draw (7,0)--(8,0);
   \draw (2.7,1.3)--(3.7, 1.3);\draw (5.7, 1.3)--(6.7, 1.3); \draw(8.7, 1.3)--(9.7,1.3);
   \draw (1,0)--(2.7,1.3);\draw(8,0)--(9.7,1.3);

	\draw(2,0)--(5.7,1.3); \draw(6.7,1.3)--(7,0);
	\draw(3.7, 1.3)-- (4, 0); \draw (5,0)--(8.7, 1.3);

    \foreach \y in {1,2,4,5,7,8}{
      \draw (\y,0) node [w] (c\y) {};
      \draw (\y+1.7,1.3) node [w] (d\y) {};

     }
      \draw (1.5, 3.5) node [w] (x) {};
      \node at (4.5, -1) {$G_3$};

   \end{tikzpicture}

  \caption{An example procedure to find a large wheel. The graph $G_2$ is $(G_1*w_1*w_2*w_3*w_4*z_1*z_2)-\{w_1, w_2, w_3, w_4, z_1, z_2\}$, and 
  the graph $G_3$ is $(G_2\pivot p_2q_2\pivot p_5q_5)-\{p_2, p_5, q_2, q_5\}$. Since $G_3$ is isomorphic to a subdivision of $W_{8}$, it contains $W_{8}$ as a vertex-minor.}\label{fig:badcasenewway}
\end{figure}

Our argument begins with a structure arising from recursively taking repeated levelings. 
Explicitly, we aim to find pairwise disjoint vertex sets $X_i$ and $Y_1, \ldots, Y_i$ and $Z_1, \ldots, Z_i$ in a graph $G$ with sufficiently large $i$ such that   
\begin{itemize}
\item  $G[X_i]$ has large chromatic number,
\item for each vertex $v\in X_i$ and each $x\in \{1, \ldots, i\}$, $v$ has a neighbor in $Y_x$ and no neighbors in $Z_x$, 
\item for each $x\in \{1, \ldots, i\}$, every vertex in $Y_x$ has a neighbor in $Z_x$, 
\item for each $x\in \{1, \ldots, i\}$, there exists a vertex $r_x\in Z_x$ where 
for every $v\in N_G(Y_x)\cap Z_x$, there is a path $P$ from $v$ to $r_x$ in $G[Z_x]$ with $N_G(Y_x)\cap V(P)=\{v\}$,
\item for distinct integers $x,y\in \{1, \ldots, i\}$ with $x<y$, there are no edges between $Z_x$ and $Z_y\cup Y_y$. 
\end{itemize}
Assume that we have such $X_i$, $Y_1, \dots, Y_i, Z_1, \dots, Z_i$.  If $\chi(G[X_i])$ is sufficiently large, then some connected component $C$ of $G[X_i]$ has the same chromatic number as $G[X_i]$.
We choose a vertex $v$ in $C$, and 
we take a leveling $L_0, L_1, \ldots, L_m$ of $C$ where $L_i$ is the set of all vertices at distance $i$ from $v$.
Then there is a level $L_t$ such that $\chi(G[L_t])\ge \chi (G[X_i])/2$.
If $t=1$, we find a long induced cycle in $G[L_1]$ and thus we can obtain a large wheel vertex-minor directly.
Otherwise, it holds that $t\ge 2$.  Assign
 $X_{i+1}:=L_t$ and $Y_{i+1}:=L_{t-1}$ and $Z_{i+1}:=L_0\cup L_1\cup \cdots \cup L_{t-2}$.
Thus, by requiring $\chi(G[X_i])$ to be sufficiently large, we can either find $X_{i+1}$, $Y_1, \dots, Y_{i+1}, Z_1, \dots, Z_{i+1}$ with the desired properties, or find a large wheel vertex-minor.

From this structure, we will reduce to several types of simpler graphs step by step in Sections~\ref{sec:manufacturingwheel} and \ref{sec:patchedpath}.
We first find a long induced cycle $C=q_1q_2 \cdots q_mq_1$ in $G[X_i]$ using the result of Chudnovsky, Scott, and Seymour (Theorem~\ref{thm:longcycle}).
Secondly, we obtain a structure called a \emph{$(w, \ell)$-patched cycle} where $w=i$.
The definition will be rigorously given in Section \ref{sec:patchedpath}; for the moment, we proceed more informally.  A $(w,\ell)$-patched cycle consists of $C$ along with vertex sets $S_j=\{s^j_1, s^j_2, \ldots, s^j_\ell\}\subseteq Y_j$ for each $j\in \{1, 2, \ldots, i\}$ and 
a sequence of vertices $q_{b_1}, q_{b_2}, \ldots, q_{b_{\ell}}$ with $1\le b_1< b_2<\cdots< b_{\ell}\le m$ such that 
\begin{itemize}
\item for each $x\in \{1, \ldots, i\}$ and $y\in \{1,2,\ldots, \ell\}$, 
$s^x_y$ is adjacent to $q_{b_y}$
and non-adjacent to $q_{b_z}$ for all $z\in \{1, \ldots, \ell\}\setminus \{1, \ldots, y\}$.
\end{itemize}
We prove in Proposition~\ref{prop:intermediate} that the existence of this structure is guaranteed by assuming that the graph has no large wheel vertex-minor and the conditions that 
\begin{itemize}
\item $C$ is sufficiently long, 
\item for every $v\in V(C)$, $v$ has a neighbor in each $Y_j$, 
\item each vertex in $Y_1\cup Y_2\cup \cdots \cup Y_i$ has at most $n-1$ neighbors in $C$.
\end{itemize}
Concerning the final condition, if a vertex in $Y_1\cup Y_2\cup \cdots \cup Y_i$ has at least $n$ neighbors in $C$, then we can directly obtain a $W_n$ vertex-minor (Lemma~\ref{lem:fromlargerwheel}).

Up until this point in the argument, we have made no assumptions on the possible edges between pairs of vertices in the set $S_1\cup S_2\cup \cdots \cup S_i$.
As we argued in Figure~\ref{fig:badcasenewway}, we want to find a large independent set formed by two disjoint subsets from two distinct sets $S_j$ and $S_{j'}$. For this, we apply a Ramsey-type argument, which we call the \emph{rectangular Ramsey lemma} (Proposition~\ref{prop:recramsey}).
This lemma implies that there exist a large subset $J\subseteq \{1, 2, \ldots, \ell\}$ and $\{c_1, c_2\}\in \{1, 2, \ldots, i\}$ such that
$\{s^{c_1}_x: x\in J\}\cup \{s^{c_2}_x: x\in J\}$ is an independent set, if $G$ has no large clique. 

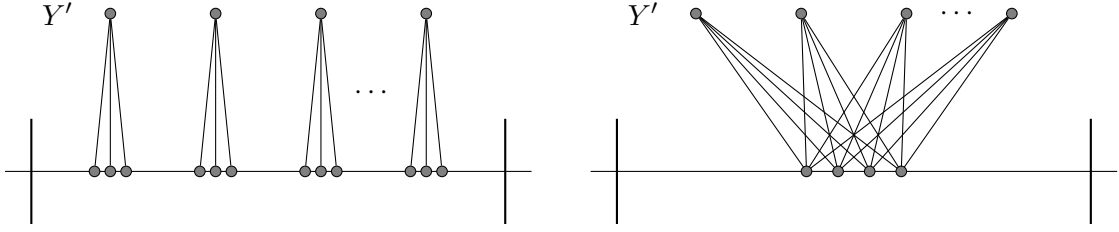
\begin{figure}
  \centering
  \begin{tikzpicture}[scale=0.7]
    \tikzstyle{w}=[circle,draw,fill=black!50,inner sep=0pt,minimum width=4pt]

   \draw (0,0)--(10,0);
  \draw[thick] (0.5, 1)--(0.5,-1);
  \draw[thick] (9.5, 1)--(9.5,-1);
 \foreach \y in {1,...,4}{
      \draw (\y*2-.3,0) node [w] (a\y) {};
      \draw (\y*2+.3,0) node [w] (b\y) {};
      \draw (\y*2,0) node [w] (c\y) {};
      \draw (\y*2,3) node [w] (e\y) {};
      \draw(a\y)--(e\y);
      \draw(b\y)--(e\y);
      \draw(c\y)--(e\y);
    }

      \node at (1, 3) {$Y'$};
      \node at (7, 1.5) {$\cdots$};

   \end{tikzpicture}\quad\quad
  \begin{tikzpicture}[scale=0.7]
    \tikzstyle{w}=[circle,draw,fill=black!50,inner sep=0pt,minimum width=4pt]

   \draw (0,0)--(10,0);
  \draw[thick] (0.5, 1)--(0.5,-1);
  \draw[thick] (9.5, 1)--(9.5,-1);
 \foreach \y in {1,...,4}{
      \draw (\y*2,3) node [w] (e\y) {};
      \draw(5-.9, 0)--(e\y);
      \draw(5-.3, 0)--(e\y);
      \draw(5+.3, 0)--(e\y);
      \draw(5+.9, 0)--(e\y);
    }
      \draw (5-.9,0) node [w] () {};
      \draw (5-.3,0) node [w] () {};
      \draw (5+.3,0) node [w] () {};
      \draw (5+.9,0) node [w] () {};

      \node at (1, 3) {$Y'$};
      \node at (7, 3) {$\cdots$};

   \end{tikzpicture}
  \caption{The intended application of the regular partition lemma.}\label{fig:partitionlemma}
\end{figure}

We further refine the adjacency relations between $\{s^{c_1}_x: x\in J\}\cup \{s^{c_2}_x: x\in J\}$ and $C$ using the following Ramsey-type argument:
for a graph $H$ on the vertex set $D\cup Y$ such that
\begin{itemize}
\item $H[D]$ is a sufficiently long induced cycle, 
\item for every $v\in D$, $v$ has a neighbor in $Y$, 
\item each vertex in $Y$ has at most $n-1$ neighbors in $D$,
\end{itemize}
there is a large subset $Y'\subseteq Y$ and a partition of $H[D]$ into at most $n-1$ paths such that for each part, either vertices in $Y'$ 
have the exactly same neighborhood, or neighborhoods appear in a consecutive order.
Figure~\ref{fig:partitionlemma} shows the two cases for how the vertices of $Y'$ can be adjacent to the vertices in $H[D]$ which is in one subpath of an element of the partition of $H[D]$.  
We prove this result in a more general setting, which we call the \emph{regular partition lemma} (Proposition~\ref{prop:regularpartition}), with the hope that it might be of use in other situations. 

Depending on the outcome of the application of the regular partition lemma, we show that $G$ contains a vertex-minor isomorphic to one of several cases we call a drum, a clam, and a hanging ladder, depicted in Figures~\ref{fig:drum}, \ref{fig:clam}, \ref{fig:hanging}, respectively.

\section{Regular partition lemma}\label{sec:regpartlem}

For a sequence $(A_1, \ldots, A_{\ell})$ of finite subsets of an interval $I\subseteq \mathbb{R}$, a partition $\{I_1, \ldots, I_k\}$ of $I$ into intervals 
is called a \emph{regular partition} of $I$ with respect to $(A_1, \ldots, A_{\ell})$ if 
for all $i\in\{1,\ldots,k\}$, either 
\begin{itemize}
\item $A_1\cap I_i=A_2\cap I_i=\cdots = A_{\ell}\cap I_i\not= \emptyset$, or 
\item $\abs{A_1\cap I_i}=\abs{A_2\cap I_i}=\cdots = \abs{A_{\ell}\cap I_i}>0$, and for all $j,j'\in \{1, \ldots, \ell\}$ with $j<j'$, $\max(A_j\cap I_i)<\min (A_{j'}\cap I_i)$, or
\item $\abs{A_1\cap I_i}=\abs{A_2\cap I_i}=\cdots = \abs{A_{\ell}\cap I_i}>0$, and for all $j, j'\in \{1, \ldots, \ell\}$ with $j<j'$, $\max (A_{j'}\cap I_i)< \min (A_j\cap I_i)$. 
\end{itemize}
The number of parts $k$ is called the \emph{order} of the regular partition.

The following lemma is a strengthening of Erd\H os-Szekeres theorem. We simply follow the proof of Seidenberg~\cite{seidenberg1959}. We say that a sequence is \emph{identical} if all elements of the sequence are same. 
\begin{lemma}\label{lem:erdosszekerestype}
For every sequence $(a_1, \ldots, a_{(\ell-1)^3+1})$ of real numbers, there exists a subsequence $(a_{i_1}, \ldots, a_{i_\ell})$ that is identical or strictly increasing or strictly decreasing. %
\end{lemma}

\begin{proof}
For each $a_i$, we define a triplet $(a_i^1, a_i^2, a_i^3)$ where
\begin{itemize}
\item $a_i^1$ is the length of the longest identical subsequence ending at $a_i$, %
\item $a_i^2$ is the length of the longest strictly increasing subsequence ending at $a_i$, and
\item $a_i^3$ is the length of the longest strictly decreasing subsequence ending at $a_i$.
\end{itemize} 

Note that $(a_i^1, a_i^2, a_i^3)\neq(a_j^1, a_j^2, a_j^3)$ for all $i\neq j$, since $a_j=a_i$ or $a_j>a_i$ or $a_j<a_i$. 
However, the number of different triplets such that $0< a_i^1, a_i^2, a_i^3<\ell$ is at most $(\ell-1)^3$. 
Therefore, there exists $a_k$ such that one of $a_k^1, a_k^2$, and $a_k^3$ is $\ell$, completing the proof. 
\end{proof}

\begin{proposition}[Regular partition lemma]\label{prop:regularpartition}
Let $I\subseteq \mathbb R$ be an interval.
For all positive integers $k$ and $\ell$,
there exists a positive integer $N = N(k, \ell)$ satisfying the following.
For every sequence $(A_1, \ldots, A_N)$ of $k$-element subsets of $I$, 
there exist a subsequence $(A_{j_1}, \ldots, A_{j_{\ell}})$ of $(A_1, \ldots, A_N)$ and a regular partition of $I$ with respect to $(A_{j_1}, \ldots, A_{j_{\ell}})$ that has order at most $k$. 
\end{proposition}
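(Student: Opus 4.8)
The plan is to prove the Regular Partition Lemma (Proposition~\ref{prop:regularpartition}) by induction on $k$, using Lemma~\ref{lem:erdosszekerestype} as the base case engine and the workhorse at each inductive step. The rough idea: for each $k$-element set $A_i = \{a_i^1 < a_i^2 < \cdots < a_i^k\}$, look at the position of its smallest element $a_i^1$. These numbers $a_1^1, a_2^1, \ldots, a_N^1$ form a sequence of reals, so by Lemma~\ref{lem:erdosszekerestype} (applied with parameter large enough) we can pass to a long subsequence on which the sequence of minima is identical, strictly increasing, or strictly decreasing. Each of these three outcomes will let us "split off" a single interval $I_1$ containing exactly one element of each $A_{j}$ in the subsequence, in a way consistent with one of the three bullet conditions in the definition of regular partition. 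Then I would apply the induction hypothesis (for $k-1$) to the sets $A_{j} \setminus \{a_{j}^1\}$, restricted to the complementary interval $I \setminus I_1$, to obtain a regular partition of order at most $k-1$ there; adjoining $I_1$ gives order at most $k$.

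\textbf{Carrying this out more carefully.} First I would set up the recursion for $N$: define $N(1,\ell)$ via Lemma~\ref{lem:erdosszekerestype}, namely $N(1,\ell) = (\ell-1)^3+1$, since a $1$-element subset is just a real number and the three cases of the lemma are exactly the three bullets (with $k=1$, so order $1$, and the single interval is all of $I$). For the inductive step, given $k \ge 2$ and target length $\ell$, I would choose $N(k,\ell)$ so that $(\ell'-1)^3 + 1 \le N(k,\ell)$ where $\ell' = N(k-1,\ell)$; i.e. $N(k,\ell) := (N(k-1,\ell)-1)^3 + 1$ works. Given $(A_1,\ldots,A_N)$, apply Lemma~\ref{lem:erdosszekerestype} to the sequence of minima to get a subsequence of length $\ell' = N(k-1,\ell)$ on which the minima are identical, increasing, or decreasing. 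Re-index this subsequence as $(B_1, \ldots, B_{\ell'})$. Now the delicate point is choosing the splitting interval $I_1$. In the "identical minima" case, let $c$ be the common minimum, set $I_1$ to be the part of $I$ at or below $c$ and $I' = I \setminus I_1$; then $B_1 \cap I_1 = \cdots = B_{\ell'} \cap I_1 = \{c\}$, satisfying the first bullet. In the "increasing minima" case $a^1_{i_1} < a^1_{i_2} < \cdots$, I want $I_1$ to contain all these minima and nothing else from any $B_t$; I can take $I_1$ to be the subinterval of $I$ strictly below the smallest second-coordinate element $\min_t a^2_{i_t}$ — but one must check this threshold exceeds every $a^1_{i_t}$ and is at most every element that should go to $I'$. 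The "decreasing" case is symmetric. Then apply the induction hypothesis to $(B_1 \setminus \{b\}, \ldots, B_{\ell'} \setminus \{b\})$ as $(k-1)$-element subsets of the interval $I'$, obtaining a length-$\ell$ subsequence and a regular partition $\{I_2,\ldots,I_{k'}\}$ of $I'$ of order $\le k-1$; prepend $I_1$.

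\textbf{The main obstacle} I anticipate is the bookkeeping around the splitting interval $I_1$: I need it to capture exactly one element (the minimum) of each set in the chosen subsequence and to be an honest interval disjoint from $I'$, while simultaneously guaranteeing that $I'$ still contains all of $B_t \setminus \{b\}$ for every $t$. In the increasing case this requires that the largest selected minimum lies below the smallest non-minimum element across the whole subsequence — which is \emph{not} automatic, so one likely has to first thin the subsequence again (or interleave the argument) to ensure the minima are not merely increasing but separated from the rest. A clean way around this: rather than splitting off a full interval first, run the Erd\H{o}s–Szekeres-type selection on minima, then on second-smallest elements within that subsequence, and so on through all $k$ coordinates, ending with a subsequence of length $\ell$ on which, for each coordinate $j$, the $j$-th smallest elements are identical/increasing/decreasing; only \emph{then} carve $I$ into $k$ intervals, one per coordinate, placing the cut points between consecutive coordinate-bands. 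This avoids the subtlety about interleaving of different coordinates, at the cost of a tower-type bound $N(k,\ell) \le \big((\cdots((\ell-1)^3+1 - 1)^3 + 1 \cdots)\big)$ iterated $k$ times, which is fine since we only need existence of \emph{some} $N$. The remaining routine check is that the three per-coordinate outcomes, once the intervals are fixed, literally match the three bullets in the definition of a regular partition — the "identical" coordinate-outcome forces $A_{j_1}\cap I_i = \cdots = A_{j_\ell}\cap I_i$ (first bullet); a coordinate where things increase contributes to the second bullet; decreasing to the third — and that adjacent coordinates can always be merged or separated into genuine subintervals of $I$, which I would verify by a short ordering argument.
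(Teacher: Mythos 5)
Your proposal correctly identifies the main obstacle — interleaving between consecutive coordinates — but the fix you offer does not actually dispatch it, and the resulting gap is precisely the hard part of the proof.

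In your second plan you first thin the sequence so that, for each coordinate $j$, the $j$-th smallest elements are identical, increasing, or decreasing, and you then want to ``carve $I$ into $k$ intervals, one per coordinate.'' That carving presupposes that the $j$-th coordinates of all chosen sets lie entirely to the left of the $(j+1)$-th coordinates, which is \emph{not} guaranteed by per-coordinate monotonicity. A concrete obstruction with $k=2$: take $A_t=\{t,\,t+\tfrac12\}$. Both coordinates are strictly increasing after no thinning at all, yet no cut point $x$ has all first coordinates $\le x$ and all second coordinates $>x$, so there is no ``coordinate band'' boundary. The correct outcome here is a regular partition of order $1$ (the second bullet, a staircase), which you can only reach by thinning a second time so that $\max A_{t_i}<\min A_{t_{i+1}}$; and that thinning succeeds only if, for a long run of indices, the $(j+1)$-th coordinate never drops below the $j$-th coordinate of the set a fixed number of steps earlier. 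When it \emph{does} drop (which can happen even with all coordinates increasing, e.g.\ $A_t=\{1-1/t,\;2-1/t\}$), you are forced to find a threshold $x$ that splits every set into a left part of size $j$ and a right part of size $k-j$ and then recurse \emph{separately on each side}. You dismiss all of this as ``a short ordering argument,'' but it is a genuine case analysis (is there an increasing--decreasing flip? a narrow window where a cut works? otherwise thin to a staircase) and it is exactly where the paper spends its effort.

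There is also a structural mismatch in the recursion. You recurse on $(k-1)$-element sets after peeling off a single coordinate; the paper instead, upon finding a separating threshold $x$, applies the induction hypothesis to the $j$-element left pieces to obtain a long subsequence, and then applies it \emph{again} to the $(k-j)$-element right pieces of that subsequence, stitching the two regular partitions together. This nested double application is needed because the subsequence good for the left side need not be good for the right side. Your $(k-1)$-at-a-time peeling does not capture this, and as you yourself note, the ``split off the minima as one interval'' move fails in the increasing and decreasing cases. In short, the right obstacle is identified, but the proposed remedy restates the problem rather than solving it.
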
 

\begin{proof} 

We recursively define $t(n,\ell), M(n,\ell), N(n, \ell)$ as follows
\begin{align*}
t(n,\ell)&=
  \begin{cases}
    \max \{N(i,N(n-i,\ell)): i=1,2,\ldots,n-1\}  &\text{if }n>1,
    \\
    \ell &\text{if }n=1.
  \end{cases}\\
M(n,\ell)&=(t(n,\ell)-1)(\ell-1)n+1.\\
N(n,\ell)&=
  \begin{cases}
    (M(n,\ell)-1)^{(2^n)}+1& \text{if }n>1,\\
    (\ell-1)^3+1 & \text{if } n=1.
  \end{cases}
\end{align*}
We proceed by induction on $k$. 
If $k=1$, the statement is implied by Lemma~\ref{lem:erdosszekerestype}. 
If $\ell=1$, then the partition $\{I\}$ of $I$ is a regular partition with respect to $A_1$. 
We may assume that $k,\ell\ge 2$. Note that $M(k,\ell)\ge t(k,\ell)$. 
By slightly abusing notation, let us identify $A_i$ with a strictly increasing sequence $(a_{i,1}, a_{i,2},\ldots, a_{i,k})$ 
of its elements.
Let $M=M(k,\ell)$, and $t=t(k,\ell)$. 
Let $\mathcal A_0=(A_1,\ldots,A_N)$. 
For each $i=1,2,\ldots,k$, there exists a subsequence $\mathcal A_i$ of $\mathcal A_{i-1}$ such that
the sequence of 
the $i$-th elements of terms of $\mathcal A_i$  is (not necessarily strictly) increasing or decreasing 
where \[
\abs{\mathcal A_{i}} \ge \sqrt{\abs{\mathcal A_{i-1}}-1}+1
\]
by the Erd\H{o}s-Szekeres Theorem. Then $\abs{\mathcal A_k}\ge M$. Let $\mathcal A=(A_{i_1}, \ldots, A_{i_M})$ be a subsequence of $\mathcal A_k$ of length $M$.

By the construction, for each $j\in\{1,2,\ldots,k\}$, 
the sequence $(a_{i_1,j},a_{i_2,j},\ldots,a_{i_M,j})$ of
the $j$-th elements 
of terms of $\mathcal A$ is increasing
or decreasing.
By symmetry, we may assume that
$(a_{i_1,1}, \ldots, a_{i_M,1})$ is increasing, 
because otherwise we consider the reverse $(A_N, A_{N-1},\ldots,A_1)$.

Suppose that there exists an integer $0<j<k$ such that  $(a_{i_1,j}, \ldots, a_{i_M,j})$ is increasing and  $(a_{i_1,j+1}, \ldots, a_{i_M,j+1})$ is decreasing. Let  $x\in (a_{i_M,j}, a_{i_M,j+1})$.
As $(a_{i_1,j}, \ldots, a_{i_M,j})$ is increasing,
the first $j$ elements of each term of $\mathcal A$ are less than $x$.
Similarly the remaining $k-j$ elements of each term of $\mathcal A$
are greater than $x$
because $(a_{i_1,j+1}, \ldots, a_{i_M,j+1})$ is decreasing.
Thus we observe that
\[\abs{A_{i_1}\cap (-\infty,x]}=\abs{A_{i_2}\cap (-\infty,x]}
=\cdots=\abs{A_{i_M}\cap (-\infty,x]}=j\]
and 
\[\abs{A_{i_1}\cap (x,\infty)}=\abs{A_{i_2}\cap (x,\infty)}
=\cdots=\abs{A_{i_M}\cap (x,\infty)}=k-j.\]
Since $0<j<k$ and $M\ge t\ge N(j, N(k-j,\ell))$, by the induction hypothesis
applied to $(A_{i_1}\cap (-\infty,x], A_{i_2}\cap (-\infty,x],\ldots,
A_{i_M}\cap (-\infty,x])$,
there exist a subsequence $\mathcal A'$ of $\mathcal A$ with $\abs{\mathcal A'}=N(k-j,\ell)$ and a regular partition of $I\cap (-\infty,x]$ with respect to $\mathcal A'$ that has order at most $j$. Again by the induction hypothesis, we obtain a subsequence $\mathcal A''$ of $\mathcal A'$ with $\abs{\mathcal A''}=\ell$ and a regular partition of $I\cap (x,\infty)$ with respect to $\mathcal A''$ that has order at most $k-j$. The union of the regular partitions of $I\cap (-\infty,x]$ and $I\cap (x,\infty)$ is a regular partition of $I$ with respect to $\mathcal A''$ of order at most $k$, so we are done.
Therefore, we may assume that $(a_{i_1,j}, \ldots, a_{i_M,j})$ is increasing for every $j\in \{1,\ldots, k\}$.

Suppose that
$a_{i_{s+t-1},j}< a_{i_s,j+1}$ for  some $1\le s\le M-t+1$ and $1\le j\le k-1$.
Then
there exists  $x\in (a_{i_{s+t-1},j}, a_{i_s,j+1})$.
We deduce that
\[ \abs{A_{i_s}\cap (-\infty,x]}=
  \abs{A_{i_{s+1}}\cap(-\infty,x]}=
    \cdots
    =\abs{A_{i_{s+t-1}}\cap (-\infty,x]}=j\]
  and 
\[ \abs{A_{i_s}\cap (x,\infty)}=
  \abs{A_{i_{s+1}}\cap(x,\infty)}=
    \cdots
    =\abs{A_{i_{s+t-1}}\cap (x,\infty)}=k-j.\]
  Since $t\ge N(j,N(k-j,\ell))$, 
  by applying the induction hypothesis to a partition $I\cap (-\infty,x]$
and to a partition $I\cap (x,\infty)$ as in the previous paragraph, 
we are done.

Thus, we may assume that
$a_{i_{s+t-1},j}\ge a_{i_s,j+1}$ for  all $1\le s\le M-t+1$ and $1\le j\le k-1$.
Therefore $a_{i_{s+(t-1)k},1}\ge a_{i_{s+t-1},k}>a_{i_{s+t-1},k-1}\ge a_{i_s,k}$ for all $s$ with $1\le s\le M-(t-1)k$. This implies that
$\max A_{i_{1+(t-1)kj}}<\min A_{i_{1+(t-1)k(j+1)}}$ for each $0\le j\le \ell-2$
  and therefore 
$\{I\}$ is a regular partition of $I$ with respect to $(A_{i_1}, A_{i_{1+(t-1)k}}, \ldots, A_{i_{1+(t-1)k(\ell-1)}})$. 
\end{proof}

\begin{corollary}\label{cor:regularpartition}
Let $I$ be an interval in $\mathbb R$.
For all positive integers $k$ and $\ell$, there exists an integer $N=N'(k, \ell)$ satisfying the following:
For every sequence $(A_1, \ldots, A_N)$ of sets of at most $k$
reals in $I$, 
there exist a sequence $1\le j_1<j_2<\cdots<j_\ell\le N$ and a regular partition of $I$ with respect to $(A_{j_1}, \ldots, A_{j_{\ell}})$ that has order at most $k$. 
\end{corollary}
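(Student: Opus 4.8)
The plan is to derive Corollary~\ref{cor:regularpartition} from Proposition~\ref{prop:regularpartition} by a single pigeonhole step. A nonempty set of at most $k$ reals has cardinality in $\{1,2,\ldots,k\}$, so once the sequence $(A_1,\ldots,A_N)$ is long enough it contains a long subsequence of sets all of the same cardinality $k'$ with $1\le k'\le k$; on that subsequence Proposition~\ref{prop:regularpartition}, applied with $k'$ in place of $k$, supplies a length-$\ell$ subsequence together with a regular partition of order at most $k'\le k$, which is precisely the assertion of the corollary.

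To make this precise I would first set $\widetilde N(k,\ell):=\max\{N(k',\ell):1\le k'\le k\}$, so as not to have to verify monotonicity of $N(\cdot,\ell)$ from its recursive definition, and then define $N'(k,\ell):=k\bigl(\widetilde N(k,\ell)-1\bigr)+1$. We may assume that every $A_i$ is nonempty: the case in which some $A_i$ is empty does not occur in the applications (there the $A_i$ are sets of neighbours of vertices that do have a neighbour in the relevant set), and an empty set carries no information for a regular partition of a nonempty interval. Given $(A_1,\ldots,A_N)$ with $N=N'(k,\ell)$ and all $A_i$ nonempty, the pigeonhole principle produces a common cardinality $k'\in\{1,\ldots,k\}$ attained by at least $\widetilde N(k,\ell)\ge N(k',\ell)$ of the sets; restricting to those sets and invoking Proposition~\ref{prop:regularpartition} with parameter $k'$ yields indices $1\le j_1<\cdots<j_\ell\le N$ and a regular partition of $I$ with respect to $(A_{j_1},\ldots,A_{j_\ell})$ of order at most $k'\le k$, as required.

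I do not expect any genuine obstacle here: the mathematical substance is entirely contained in Proposition~\ref{prop:regularpartition}, and what remains is bookkeeping — fixing the constant $N'$, the harmless passage to equal cardinalities, and the equally harmless reduction to nonempty sets. The only point where one must tread carefully is this last reduction, since a sequence of empty sets admits no regular partition of a nonempty interval; reading the corollary for nonempty sets (or adopting the convention that when $I$ is empty the empty partition is vacuously regular) removes the issue.
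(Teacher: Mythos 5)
Correct, and it is the natural derivation: the paper gives no explicit proof of this corollary, and pigeonholing on $\lvert A_i\rvert\in\{1,\ldots,k\}$ to obtain $\widetilde N(k,\ell)\ge N(k',\ell)$ sets of a common cardinality $k'$, then invoking Proposition~\ref{prop:regularpartition} with $k'$ in place of $k$, is exactly the intended route, and your choice $N'(k,\ell)=k(\widetilde N(k,\ell)-1)+1$ with $\widetilde N(k,\ell)=\max_{1\le k'\le k}N(k',\ell)$ sidesteps having to check monotonicity of $N(\cdot,\ell)$. Your caveat about empty $A_i$ is also a fair observation: the definition of a regular partition forces every $A_{j_x}\cap I_i$ to be nonempty, so the corollary as stated implicitly presumes the $A_i$ are nonempty --- which indeed holds at both places it is applied inside the proof of Proposition~\ref{prop:finalconfig2}, where each set contains $b_j$ (respectively $b_{c_j}$).
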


\section{Rectangular Ramsey Lemma}\label{sec:ramsey}
Let $G$ be a graph with vertex set $\{1,2,\ldots,m\}\times\{1,2,\ldots,n\}$.
We would like to show that either $G$ has a large clique
or there exist subsets $X\subseteq\{1,2,\ldots,m\}$ and $Y\subseteq\{1,2,\ldots,n\}$ 
such that $X\times Y$ is an independent set in $G$ and both $\abs{X}$ and $\abs{Y}$ are large.
We prove it using the Product Ramsey Theorem.
For a set $X$ and a non-negative integer $k$, we denote by ${X\choose k}$ the set of all $k$-element subsets of $X$.

\begin{theorem}[Theorem 11.5 of \cite{Trotter1992}; See also \cite{GrahamRS1990}]\label{thm:productramsey}
Let $r, t$ be positive integers, and let $k_1, k_2, \ldots, k_t$ be nonnegative integers, and
let $m_1, m_2, \ldots, m_t$ be integers with $m_i\ge k_i$ for each $i\in \{1, 2, \ldots, t\}$.
Then there exists an integer $R=R_{prod}(r,t;k_1, k_2, \ldots, k_t; m_1, m_2, \ldots, m_t)$ such that
if $X_1, X_2, \ldots, X_t$ are sets with $\abs{X_i}\ge R$ for each $i\in \{1, 2, \ldots, t\}$, 
then for every function $f:{X_1 \choose k_1}\times {X_2 \choose k_2} \times \cdots \times {X_t \choose k_t} \rightarrow \{1, 2, \ldots, r\}$, 
there exist an element $\alpha\in \{1, 2, \ldots r\}$ and subsets $Y_1, Y_2, \ldots, Y_t$ of $X_1, X_2, \ldots, X_t$, respectively, 
so that $\abs{Y_i}\ge m_i$ for each $i\in \{1, 2, \ldots, t\}$, and 
$f$ maps every element of ${Y_1\choose k_1}\times {Y_2\choose k_2}\times \cdots \times {Y_t\choose k_t}$ to $\alpha$.
\end{theorem}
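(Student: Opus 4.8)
The plan is to prove Theorem~\ref{thm:productramsey} by induction on the number $t$ of coordinates, using the ordinary hypergraph Ramsey theorem both as the base case and as the engine of the inductive step. Write $R_{\mathrm{Ram}}(r;k;m)$ for the classical Ramsey number, that is, the least integer $N$ such that every colouring of $\binom{X}{k}$ with $r$ colours, where $\abs{X}\ge N$, admits a monochromatic $Y\subseteq X$ with $\abs{Y}\ge m$. When $t=1$, the assertion of Theorem~\ref{thm:productramsey} is precisely the existence of $R_{\mathrm{Ram}}(r;k_1;m_1)$, so we may put $R_{prod}(r,1;k_1;m_1):=R_{\mathrm{Ram}}(r;k_1;m_1)$.

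For the inductive step, assume $t\ge 2$ and that the theorem is already known for $t-1$ coordinates, and set $p:=R_{prod}(r,t-1;k_1,\ldots,k_{t-1};m_1,\ldots,m_{t-1})$. The idea is to deal with the last coordinate first. Given sets $X_1,\ldots,X_t$ and a colouring $f\colon\binom{X_1}{k_1}\times\cdots\times\binom{X_t}{k_t}\to\{1,\ldots,r\}$, first pass to subsets $X_i'\subseteq X_i$ of size exactly $p$ for each $i<t$. For each $B\in\binom{X_t}{k_t}$, form the ``slice'' $g_B\colon\binom{X_1'}{k_1}\times\cdots\times\binom{X_{t-1}'}{k_{t-1}}\to\{1,\ldots,r\}$ defined by $g_B(A_1,\ldots,A_{t-1})=f(A_1,\ldots,A_{t-1},B)$. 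Because the domain of each $g_B$ is a single fixed set of size $\prod_{i<t}\binom{p}{k_i}$, there are only $r':=r^{\prod_{i<t}\binom{p}{k_i}}$ possible slices. Colour $\binom{X_t}{k_t}$ with this palette of $r'$ colours via $B\mapsto g_B$ and apply the ordinary Ramsey theorem: if $\abs{X_t}\ge R_{\mathrm{Ram}}(r';k_t;m_t)$, there is a set $Y_t\subseteq X_t$ with $\abs{Y_t}\ge m_t$ such that $g_B$ is one and the same function $g$ for every $B\in\binom{Y_t}{k_t}$.

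It then remains to tidy up the first $t-1$ coordinates. The map $g\colon\binom{X_1'}{k_1}\times\cdots\times\binom{X_{t-1}'}{k_{t-1}}\to\{1,\ldots,r\}$ is an $r$-colouring of a $(t-1)$-dimensional product with each $\abs{X_i'}=p=R_{prod}(r,t-1;\ldots)$, so the induction hypothesis yields a colour $\alpha$ and subsets $Y_i\subseteq X_i'$ with $\abs{Y_i}\ge m_i$ for $i<t$ on which $g$ is constantly $\alpha$. Then $f$ is constantly $\alpha$ on $\binom{Y_1}{k_1}\times\cdots\times\binom{Y_t}{k_t}$, since for any tuple $(A_1,\ldots,A_t)$ of that product we have $f(A_1,\ldots,A_t)=g_{A_t}(A_1,\ldots,A_{t-1})=g(A_1,\ldots,A_{t-1})=\alpha$, where the middle equality uses $A_t\in\binom{Y_t}{k_t}$. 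Hence $R_{prod}(r,t;k_1,\ldots,k_t;m_1,\ldots,m_t):=\max\{p,\ R_{\mathrm{Ram}}(r';k_t;m_t)\}$ has the required property, which completes the induction.

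The one genuinely delicate point — the place where a naive approach fails — is the colour-compression step: one cannot simply choose, for every $B$, a monochromatic box in the first $t-1$ coordinates and then run Ramsey over the $B$'s, because that box would depend on $B$. Truncating the first $t-1$ coordinates to the bounded size $p$ forces the set of distinct slices to be finite, and that is exactly what makes the Ramsey argument on the last coordinate legitimate; beyond this, the proof is just bookkeeping of how the Ramsey numbers compound through the recursion. (As the attribution indicates, this is a standard result; the sketch above is the usual argument, recorded here only to show how one would carry it out.)
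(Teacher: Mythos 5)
The paper does not prove Theorem~\ref{thm:productramsey}; it is imported as a known result, cited to Theorem~11.5 of Trotter's book and to Graham--Rothschild--Spencer, so there is no in-paper proof to compare against. Your argument is the standard proof of the Product Ramsey Theorem and is correct: truncating the first $t-1$ coordinates to a fixed size $p=R_{prod}(r,t-1;\ldots)$ makes the number of possible slice functions $g_B$ finite (namely $r'=r^{\prod_{i<t}\binom{p}{k_i}}$), which legitimizes the single application of ordinary hypergraph Ramsey on $\binom{X_t}{k_t}$ to produce $Y_t$ with a common slice $g$, after which the inductive hypothesis applied to $g$ finishes the job; the recursion $R_{prod}(r,t;\ldots)=\max\{p,\ R_{\mathrm{Ram}}(r';k_t;m_t)\}$ is a valid bound. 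Your remark correctly identifies the colour-compression step as the crux, since a naive induction would produce a monochromatic box for the first $t-1$ coordinates that varies with $B$.
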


\begin{proposition}\label{prop:recramsey}
  For all positive integers $a$, $b$, and $k$, there exist positive integers $M=R_1(a,b,k)$ and $N=R_2(a,b,k)$ such that 
  for all $m\ge M$ and $n\ge N$, 
  every graph $G$ on $\{1,2,\ldots,m\}\times\{1,2,\ldots,n\}$ 
  either has  a clique of $k$ vertices
  or has subsets $X\subseteq\{1,2,\ldots,m\}$ and $Y\subseteq \{1,2,\ldots,n\}$ such that 
  $\abs{X}=a$, $\abs{Y}=b$, and $X\times Y$ is an independent set in $G$.
\end{proposition}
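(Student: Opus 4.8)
The plan is to reduce the two–dimensional statement to a single application of the Product Ramsey Theorem (Theorem~\ref{thm:productramsey}) with $t=2$. Given positive integers $a,b,k$, the quantities $M=R_1(a,b,k)$ and $N=R_2(a,b,k)$ will be obtained by an appropriate invocation of $R_{prod}$ with parameters $r=2$, $k_1=k_2=1$, and target sizes chosen large enough (in terms of $a$, $b$, and $k$) so that the resulting monochromatic product set is itself big enough to carry out a final Ramsey-type extraction. I would colour each pair $(\{i\},\{j\})\in \binom{\{1,\dots,m\}}{1}\times\binom{\{1,\dots,n\}}{1}$ by whether $(i,j)$ is an edge of $G$ or not, giving $f$ with range $\{1,2\}$. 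Theorem~\ref{thm:productramsey} then yields $X_0\subseteq\{1,\dots,m\}$ and $Y_0\subseteq\{1,\dots,n\}$, both of prescribed large size, such that $X_0\times Y_0$ is either a complete bipartite-type set (every pair adjacent) or an independent set in $G$.

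The second case is exactly what we want, provided $\abs{X_0}\ge a$ and $\abs{Y_0}\ge b$, so it remains to handle the first case, where all of $X_0\times Y_0$ consists of edges. Here I would argue that such a configuration forces a large clique: if $\abs{X_0}$ and $\abs{Y_0}$ are both at least $k$, then picking any $k$ indices from $X_0$, say $i_1<\dots<i_k$, and $k$ indices from $Y_0$, say $j_1<\dots<j_k$, the vertices $(i_1,j_1),(i_2,j_2),\dots,(i_k,j_k)$ are pairwise adjacent — for $p\ne q$, the pair $(i_p,j_q)$ lies in $X_0\times Y_0$, hence is an edge, and since $G$ is a simple graph on ordered pairs this gives an edge between the distinct vertices $(i_p,j_p)$ and $(i_q,j_q)$. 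Wait — one must be careful that $(i_p,j_p)\ne(i_q,j_q)$, which holds since $i_p\ne i_q$; and that the relevant edge is between $(i_p,j_p)$ and $(i_q,j_q)$, which is the pair $(i_p,j_q)$ or $(i_q,j_p)$ being in $X_0\times Y_0$. So taking $k$ such diagonal vertices yields a $k$-clique. Hence in the first case $G$ has a clique of $k$ vertices, as desired.

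Putting this together, I would set the target sizes in the application of $R_{prod}$ to be $m_1=m_2=\max(a,b,k)$, take $M=N=R_{prod}(2,2;1,1;\max(a,b,k),\max(a,b,k))$, and then the conclusion follows: the monochromatic product set $X_0\times Y_0$ has $\abs{X_0},\abs{Y_0}\ge\max(a,b,k)$, so either it is independent and we restrict to subsets $X\subseteq X_0$, $Y\subseteq Y_0$ of sizes exactly $a$ and $b$, or it is all-edges and the diagonal argument produces a $k$-clique.

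The only subtle point — and the one I would be most careful about — is the bookkeeping of which colour class of $f$ corresponds to ``independent'' versus ``clique-forcing,'' together with the verification that an all-edges rectangle genuinely yields a clique in $G$ rather than merely a large ``bi-clique'' structure; this works precisely because the vertex set is a Cartesian product and a diagonal transversal turns cross-edges into clique-edges. Everything else is a direct quotation of Theorem~\ref{thm:productramsey}, so there is no real combinatorial obstacle beyond choosing the parameters correctly.
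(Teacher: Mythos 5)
Your application of the Product Ramsey Theorem is set up with parameters that cannot carry any adjacency information, and this makes the argument break down at its very first step. You take $k_1=k_2=1$ and colour each element of $\binom{\{1,\dots,m\}}{1}\times\binom{\{1,\dots,n\}}{1}$ ``by whether $(i,j)$ is an edge of $G$.'' But $(i,j)$ is a single \emph{vertex} of $G$ (the vertex set is $\{1,\dots,m\}\times\{1,\dots,n\}$), not a potential edge, so the colouring is not well-defined. You appear to be treating $G$ as if it were a bipartite graph on $\{1,\dots,m\}\sqcup\{1,\dots,n\}$ with $(i,j)$ denoting a possible edge between index $i$ and index $j$; that is not what the proposition is about. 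The same confusion propagates into the ``diagonal'' argument in the second paragraph, where you write that ``$(i_p,j_q)$ lies in $X_0\times Y_0$, hence is an edge''; again $(i_p,j_q)$ is a vertex, and whether it is an element of $X_0\times Y_0$ says nothing about adjacency between the vertices $(i_p,j_p)$ and $(i_q,j_q)$. So the ``all edges'' case is vacuous and the clique extraction never happens.

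The underlying point is that to encode adjacency of $G$ into a colouring suitable for Theorem~\ref{thm:productramsey}, each coloured object must determine at least one pair of distinct vertices of $G$. With $k_1=k_2=1$ the coloured object is a single vertex, so there is no pair. The paper's proof takes $k_1=k_2=2$ and $r=7$: a pair $(\{x_1,x_2\},\{y_1,y_2\})$ determines a $2\times 2$ rectangle of four vertices, hence six potential edges, and the colour records which of those six edges appear (more precisely, the index of the first one present, or colour $7$ if none). Monochromatic with colour $7$ gives the desired independent rectangle; monochromatic with any other colour gives, e.g., a chain of edges $v_{x_1y_1}v_{x_2y_1}$ along a fixed column, which forces a large clique in $G$. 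You would need to replace your colouring with something of this kind (at minimum $k_1,k_2\ge 2$, and enough colours to distinguish all edge patterns that can force a clique). As written, the proposal does not prove the proposition.
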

\begin{proof}
 Let $t:=\max(a, b, k)$, and let $M=N=R_{prod}(7, 2; 2, 2; t, t)$. 
  We may assume that $G$ is a graph on $\{1,2,\ldots,M\}\times
  \{1,2,\ldots,N\}$.
  Let us write $v_{ij}=(i,j)$ to denote a vertex.

We define a function $f: {\{1, 2, \ldots, M\}\choose 2}\times {\{1, 2, \ldots, N\}\choose 2}\rightarrow \{1, 2, \ldots, 7\}$ as follows.
For $\{x_1, x_2\}\subseteq \{1, 2, \ldots, M\}$ and $\{y_1, y_2\}\subseteq \{1, 2, \ldots, N\}$ with $x_1<x_2$ and $y_1<y_2$, 
let 
\[(e_1, e_2, e_3, e_4, e_5, e_6):=(v_{x_1y_1}v_{x_2y_1}, v_{x_1y_1}v_{x_1y_2}, v_{x_1y_1}v_{x_2y_2},  v_{x_2y_1}v_{x_1y_2}, v_{x_2y_1}v_{x_2y_2}, v_{x_1y_2}v_{x_2y_2}),\]
and let $f(\{x_1, x_2\}, \{y_1, y_2\}):=i$ if $G$ contains $e_i$ but does not contain $e_j$ for all $j<i$, and $f(\{x_1, x_2\}, \{y_1, y_2\}):=7$ if $G$ contains no edges in $\{e_1, \ldots, e_6\}$.
By Theorem~\ref{thm:productramsey}, 
there exist $\alpha \in \{1, 2, \ldots, 7\}$, $X\subseteq \{1, 2, \ldots, M\}$, and $Y \subseteq \{1, 2, \ldots, N\}$ with $\abs{X}\ge t$ and $\abs{Y}\ge t$ such that
$f$ maps every element of ${X\choose 2}\times {Y\choose 2}$ to $\alpha$.
If $\alpha=7$, then $X\times Y$ is an independent set with $\abs{X}\ge a$ and $\abs{Y}\ge b$, 
and if $\alpha\in \{1, 2, \ldots, 6\}$, then $G$ contains a clique of size $t\ge k$.
\end{proof}

\section{Manufacturing wheels}\label{sec:manufacturingwheel}
We will use the following Ramsey-type result on connected graphs.

\begin{theorem}[folklore; see Diestel~{\cite{Diestel2010}}]\label{thm:binary}
  For $k\ge 1$ and $\ell\ge 3$,  
  every connected graph on at least $k^{\ell-2}+1$ vertices contains 
  a vertex of degree at least $k$ or an induced path on $\ell$ vertices.
\end{theorem}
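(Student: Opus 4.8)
The plan is to prove the stated dichotomy by a direct extremal argument: I will assume $G$ is a connected graph with $\abs{V(G)}\ge k^{\ell-2}+1$ in which \emph{no} vertex has degree at least $k$, and then exhibit an induced path on $\ell$ vertices. Write $\Delta=k-1$, so that every vertex of $G$ has degree at most $\Delta$. The whole proof then amounts to bounding $\abs{V(G)}$ in terms of $\Delta$ and the length of a longest induced path.

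First I would record the elementary fact that a shortest path between two vertices of any graph is an induced path. Consequently, if $G$ had no induced path on $\ell$ vertices, then no two vertices of $G$ could be at distance $\ell-1$ or more. Fixing any root $r\in V(G)$ and letting $L_i$ be the set of vertices at distance exactly $i$ from $r$, this forces $L_i=\emptyset$ for $i\ge \ell-1$, so $V(G)=L_0\cup L_1\cup\cdots\cup L_{\ell-2}$ with $\abs{L_0}=1$.

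Next I would bound each layer. Trivially $\abs{L_1}\le \Delta$. For $i\ge 2$, every vertex of $L_{i-1}$ has at least one neighbor in $L_{i-2}$, hence at most $\Delta-1$ neighbors in $L_i$; since every vertex of $L_i$ has a neighbor in $L_{i-1}$, this yields $\abs{L_i}\le(\Delta-1)\abs{L_{i-1}}$, and so $\abs{L_i}\le \Delta(\Delta-1)^{i-1}$ for all $i\ge 1$. Summing gives
\[
\abs{V(G)}\le 1+\sum_{i=1}^{\ell-2}\Delta(\Delta-1)^{i-1}=1+\Delta\sum_{j=0}^{\ell-3}(\Delta-1)^{j}.
\]
A short induction on $\ell\ge 3$ shows this quantity is at most $(\Delta+1)^{\ell-2}=k^{\ell-2}$: the base case $\ell=3$ is $1+\Delta=k$, and the inductive step adds the term $\Delta(\Delta-1)^{\ell-2}\le \Delta(\Delta+1)^{\ell-2}$ to pass from exponent $\ell-2$ to $\ell-1$; the degenerate case $\Delta=0$ (i.e. $k=1$) gives $\abs{V(G)}\le 1=k^{\ell-2}$ directly. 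This contradicts $\abs{V(G)}\ge k^{\ell-2}+1$, so $G$ must contain a vertex of degree at least $k$ or an induced path on $\ell$ vertices.

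I do not anticipate a genuine obstacle: the argument is entirely elementary. The only points needing care are the two small lemmas used above — that shortest paths are induced, and that a non-root vertex of a breadth-first layering spends one unit of its degree on an edge toward the previous layer — and the routine verification of the Moore-type inequality $1+\Delta\sum_{j=0}^{\ell-3}(\Delta-1)^j\le k^{\ell-2}$.
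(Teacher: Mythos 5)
Your proof is correct. The paper itself gives no proof of Theorem~\ref{thm:binary} — it is cited as folklore from Diestel — so there is nothing in the paper to compare against; your BFS layering plus Moore-type volume bound is exactly the standard argument one would write down, and the small cases ($\Delta=0$, $\Delta=1$) and the induction verifying $1+\Delta\sum_{j=0}^{\ell-3}(\Delta-1)^j\le(\Delta+1)^{\ell-2}$ all check out.
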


The following lemma is useful to find an induced matching in a bipartite graph.

\begin{lemma}[Lemma 7.8 of \cite{KwonO2014}]\label{lem:bipartitielemma}
Let $n$ be a positive integer.
Let $G$ be a bipartite graph with a bipartition $(A,B)$ such that 
\begin{itemize}
\item every vertex in $A$ has a neighbor, 
\item every vertex in $B$ has at most $n$ neighbors.
\end{itemize}
Then there is an induced matching of size at least $\abs{A}/n$.
\end{lemma}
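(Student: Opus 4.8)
The plan is a standard greedy argument on the bipartite graph $G$ with bipartition $(A,B)$, building an induced matching one edge at a time while discarding only a controlled number of vertices of $A$ at each step. First I would observe that since every vertex of $A$ has a neighbor, $A$ is nonempty whenever $G$ has an edge; pick any $a\in A$ and any neighbor $b\in B$ of $a$, and add the edge $ab$ to our matching. To keep the matching \emph{induced}, I must ensure no future matched edge has an endpoint adjacent to $a$ or to $b$; since we only ever pick matched vertices from $A$ on one side and $B$ on the other, the constraint is: delete from $A$ every vertex adjacent to $b$ (so that no later $A$-endpoint sees $b$), and delete $a$ itself. The key point is that $b$ has at most $n$ neighbors in $A$ (by the degree bound on $B$), so at this step we remove at most $n$ vertices from $A$ — namely $a$ together with the at most $n-1$ other neighbors of $b$, or at most $n$ in total, which is the bound we want. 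We also discard $b$ from $B$, but we never need to count how much of $B$ is consumed.

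Then I would iterate: in the remaining bipartite graph $G' = G - (\{a\}\cup N_G(b))$ restricted to the surviving part of $A$ and all of $B$, every surviving vertex of $A$ still has all of its original neighbors in $B$ except possibly $b$, but crucially a surviving vertex of $A$ that had a neighbor must still have a neighbor — wait, this needs a small fix: a surviving $A$-vertex could in principle have had $b$ as its unique neighbor. However, if $a'$ has $b$ as a neighbor then $a'\in N_G(b)$ and so $a'$ was already deleted; hence every surviving $A$-vertex indeed still has a neighbor in the surviving graph, and the hypothesis is maintained for $G'$. So the induction goes through verbatim: each iteration adds one edge to the induced matching and removes at most $n$ vertices from $A$. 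After $\lceil \abs{A}/n\rceil \ge \abs{A}/n$ iterations we have either exhausted $A$ or produced that many matched edges; since each iteration is possible as long as $A$ is nonempty, we produce an induced matching of size at least $\abs{A}/n$.

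The main thing to get right — the only real subtlety — is verifying that the matching produced is genuinely \emph{induced}, i.e.\ that there are no edges of $G$ between endpoints of distinct matched edges. This is where the asymmetric deletion rule matters: when we match $a_ib_i$ we delete all $A$-neighbors of $b_i$, which guarantees no later $A$-endpoint $a_j$ ($j>i$) is adjacent to $b_i$; and since $a_j$ survived into the graph after step $i$, in particular $a_j \ne a_i$ and $a_j \notin N_G(b_i)$. Symmetrically we must check $a_i$ is not adjacent to $b_j$ for $j > i$: but $b_j$ is a neighbor of $a_j$, and $a_i \notin A$ at step $j$, so this does not immediately follow from the deletions — instead it follows because $a_i$ and $a_j$ lie on the same side $A$ of the bipartition, hence $a_i b_j$ being an edge would require $b_j \in B$ adjacent to $a_i\in A$, which is possible as a \emph{graph} edge; to exclude it we instead note we should also, at step $i$, we never delete from $B$ more than $\{b_i\}$, so actually the clean fix is: when matching $a_ib_i$, delete from $A$ all neighbors of $b_i$ \emph{and} delete $b_i$ from $B$ — then a later edge $a_jb_j$ has $b_j\ne b_i$ and $a_j\notin N_G(b_i)$, and the edge $a_ib_j$ cannot exist because... it can. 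The correct resolution, which I would use, is to delete at step $i$ from $A$ all of $N_G(b_i)$ and observe that for the matching $\{a_1b_1,\dots\}$ to be induced it suffices to additionally note that the $a_i$'s are chosen so that $a_i \notin N_G(b_j)$ for $j<i$ by construction and $a_i\notin N_G(b_j)$ for $j>i$ because when $b_j$ was selected as a neighbor of the surviving vertex $a_j$, we may simply re-pick and delete $N_G(b_i)$ at each stage — so I would phrase the greedy step as: delete $a_i$ and $N_G(b_i)\cap A$ from $A$ and $b_i$ from $B$, and check inductively that $G[\{a_1,b_1,\dots,a_t,b_t\}]$ has exactly the matching edges, using that at the moment $a_j,b_j$ are chosen ($j$ largest), all earlier $b_i$ have had their $A$-neighborhoods removed so $a_j\not\sim b_i$, and all earlier $a_i$ were removed before $b_j$'s neighborhood was relevant, forcing $a_i\not\sim b_j$ since otherwise $a_i$ would have been a candidate neighbor — I will simply carry out this bookkeeping carefully, as it is the crux of the argument.
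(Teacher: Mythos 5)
Your greedy argument has a genuine gap, and you essentially identify it yourself but never resolve it. Deleting $N_G(b_i)\cap A$ from $A$ at step $i$ guarantees that every later $a_j$ (with $j>i$) is non-adjacent to $b_i$; however, nothing in your scheme prevents $a_i$ from being adjacent to a later $b_j$. Deleting only $\{b_i\}$ from $B$ does not address this, and your closing sentence --- that $a_i\not\sim b_j$ ``since otherwise $a_i$ would have been a candidate neighbor'' --- is circular: $a_i$ was already removed at step $i$, so when $b_j$ is chosen it is simply not visible as a candidate, but the edge $a_ib_j$ may still exist in $G$. A concrete instance where the greedy output is not induced: let $A=\{a_1,a_2\}$, $B=\{b_1,b_2\}$, with $a_1\sim b_1$, $a_1\sim b_2$, $a_2\sim b_2$, and $n=2$. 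If the greedy first picks $a_1b_1$ (deleting $a_1$ from $A$ and $b_1$ from $B$), it then picks $a_2b_2$, but $\{a_1b_1,a_2b_2\}$ is not an induced matching because $a_1\sim b_2$. (The lemma only asks for one edge here, but your argument claims the greedy \emph{output} is always induced, and it is not.)

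The missing idea is a carefully asymmetric deletion together with an extremal choice of $a_i$. At step $i$, choose $a_i$ to be a surviving vertex of $A$ of \emph{minimum degree} in the current bipartite graph $G[A_{i-1}\cup B_{i-1}]$, let $b_i$ be any of its surviving neighbors, and then set $A_i:=A_{i-1}\setminus N_G(b_i)$ and $B_i:=B_{i-1}\setminus N_G(a_i)$. Now both inducedness conditions hold: $a_j\not\sim b_i$ for $j>i$ because $a_j$ survives the deletion of $N_G(b_i)$ from $A$, and $b_j\not\sim a_i$ for $j>i$ because $b_j$ survives the deletion of $N_G(a_i)$ from $B$. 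The minimum-degree choice is exactly what preserves the hypothesis that every surviving $A$-vertex still has a neighbor: if some $a'\in A_i$ had all its remaining neighbors inside $N_G(a_i)$, then, since $a'\not\sim b_i$ (as $a'$ survived the deletion of $N_G(b_i)$), the set $N_G(a')\cap B_{i-1}$ would be a \emph{proper} subset of $N_G(a_i)\cap B_{i-1}$, contradicting minimality of $\deg(a_i)$. Each step still removes at most $n$ vertices from $A$ (since $|N_G(b_i)|\le n$), so the process yields an induced matching of size at least $|A|/n$. Your write-up has all the surrounding bookkeeping, but without both the extra $B$-side deletion and the minimum-degree selection the crucial ``$a_i\not\sim b_j$'' step cannot be justified.
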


For every integer $n\ge 3$, let $\mu(n)=(n-1)(R(n,n)^{2n-3}+1)$. 
Lemma~\ref{lem:reduceconnected} is useful to reduce the size of a connected subgraph.

\begin{lemma}[Choi, Kwon, and Oum~\cite{ChoiKO2016}]\label{lem:connected}
Let $H$ be a connected graph with at least $2$ vertices. 
For each vertex $v$ of $H$, either $H- v$ or $H*v- v$ is connected. 
\end{lemma}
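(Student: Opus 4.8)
The plan is to prove the equivalent statement: \emph{if $H-v$ is disconnected, then $H*v-v$ is connected} (if $H-v$ is already connected there is nothing to do). The engine of the argument is the elementary fact that local complementation at $v$ changes adjacencies only among vertices of $A:=N_H(v)$; hence $H-v$ and $H*v-v$ share the vertex set $V(H)\setminus\{v\}$ and agree on every edge except that the induced subgraph on $A$ is replaced by its complement. So the whole question becomes: how does complementing $H[A]$ interact with the components of $H-v$?

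First I would record two structural facts. Since $H$ is connected with at least two vertices, every component of $H-v$ must contain a neighbor of $v$; otherwise such a component would have no edge to the rest of $H$, contradicting connectedness. Writing $C_1,\dots,C_k$ (with $k\ge 2$) for the components of $H-v$ and $B_i:=A\cap V(C_i)$, each $B_i$ is nonempty, and there is no edge of $H$ between $B_i$ and $B_j$ for $i\ne j$, since they lie in different components of $H-v$.

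The two ingredients are then: (1) \emph{$A$ induces a connected subgraph of $H*v-v$.} Indeed $H[A]$, being the edgeless-between-parts union of the nonempty sets $B_1,\dots,B_k$ with $k\ge 2$, is a disconnected graph on at least two vertices, so its complement $\overline{H[A]}=(H*v-v)[A]$ is connected. (2) \emph{Every non-neighbor of $v$ reaches $A$ in $H*v-v$.} Given $x\in V(H)\setminus(\{v\}\cup A)$, it lies in some $C_i$; as $C_i$ is connected and $B_i\ne\emptyset$, choose a shortest path in $H-v$ from $x$ to $B_i$, so that $x$ and all its interior vertices avoid $A$. Every edge of this path has an endpoint outside $A$, hence is untouched by the local complementation, so the path survives in $H*v-v$ and joins $x$ to $A$. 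Combining (1) and (2) yields connectivity of $H*v-v$.

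I do not anticipate a genuine obstacle; the only delicate point is the bookkeeping in ingredient (2) — one must pick the path to $A$ so that it meets $A$ only at its last vertex (a shortest path, or equivalently stopping at the first vertex of $A$ encountered), since otherwise an internal edge could lie inside $A$ and be destroyed by $*v$. It is also worth double-checking the degenerate possibilities (for instance $|A|=1$ cannot occur here because $k\ge 2$, and $v$ necessarily has a neighbor because $H$ is connected with at least two vertices), but these are immediate.
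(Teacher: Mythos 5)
The paper states this lemma as a citation to Choi, Kwon, and Oum~\cite{ChoiKO2016} and does not reprove it, so there is no in-paper proof to compare against. That said, your argument is correct and is the natural one for this statement: when $H-v$ is disconnected, every component of $H-v$ meets $N_H(v)$ (else $H$ would be disconnected), so $H[N_H(v)]$ splits over $k\ge 2$ components with no cross-edges, forcing its complement $(H*v-v)[N_H(v)]$ to be connected; and each vertex outside $N_H(v)\cup\{v\}$ reaches $N_H(v)$ along a path in its component whose internal vertices avoid $N_H(v)$, so all of its edges are untouched by the local complementation. You were right to flag the ``truncate at the first vertex of $A$'' detail as the one place needing care, and your handling of it is correct.
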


\begin{lemma}\label{lem:reduceconnected}
Let $n\ge 3$ be an integer and let $G$ be a graph on the vertex set $A\cup U\cup S$ such that
\begin{enumerate}
\item $A$, $U$, and $S$ are pairwise disjoint,  
\item there are no edges between $A$ and $S$,
\item $U$ is an independent set,
\item each vertex in $U$ has a neighbor on $S$, and
\item there exists a vertex $w\in S$ where 
for every $v\in N_G(U)\cap S$, there is a path $P$ from $v$ to $w$ in $G[S]$ with $N_G(U)\cap V(P)=\{v\}$.
\end{enumerate}
If $\abs{U}\ge \mu(n)$, then there exist $U'\subseteq U$ with $\abs{U'}\ge n$ and $v\in S$ and a graph $G'$ on $A\cup U'\cup \{v\}$ such that
\begin{enumerate}
\item $G'[A\cup U']=G[A\cup U']$,
\item $v$ is adjacent to all vertices in $U'$ and has no neighbors in $A$ in $G'$, 
\item $G'$ is a vertex-minor of $G$.
\end{enumerate}
\end{lemma}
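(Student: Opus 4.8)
The plan is to use the path-contraction machinery of Lemma~\ref{lem:fancontract} together with the connectivity-reduction of Lemma~\ref{lem:connected} to turn the ``fan-like'' structure on $A\cup U\cup S$ into a single vertex $v$ adjacent precisely to a large subset $U'$ of $U$. First I would exploit hypothesis (5): the vertex $w\in S$ together with the paths $P_v$ guarantees that $G[S]$ contains a subtree $T$ (take the union of the paths $P_v$ over $v\in N_G(U)\cap S$) such that every vertex of $N_G(U)\cap S$ is a \emph{leaf} of $T$ and the internal vertices of $T$ have no neighbors in $U$. The point of this reduction is that after restricting to $A\cup U\cup V(T)$ and discarding the rest of $S$ (which has no edges to $A$ and we only need it to reach $w$), we are in a setting where each vertex of $U$ attaches to leaves of a tree, and contracting the tree should ``merge'' all these attachments into one vertex.

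The core step is to reduce the tree $T$ to a single vertex while controlling how many vertices of $U$ survive with a clean neighborhood. Here is where $\mu(n)=(n-1)(R(n,n)^{2n-3}+1)$ comes in. I would first prune $T$ so that it is a \emph{path} or at least has few branch vertices: repeatedly applying Lemma~\ref{lem:connected} to $G[S]$ (or to $T$), at each vertex either deleting it or local-complementing then deleting, we can shrink $S$ down to something of bounded structure while keeping it connected and keeping a large chunk of $U$ attached — but we must be careful that local complementation inside $S$ does not create edges between $S$ and $A$ (it does not, by hypothesis (2), since $N_G(s)\subseteq S\cup U$ for $s\in S$, so $\ast s$ only flips adjacencies among $N_G(s)$) and does not create edges within $U$ in an uncontrolled way. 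In fact the cleanest route: since each vertex of $U$ has a neighbor among the leaves of $T$, and internal vertices of $T$ miss $U$, I would use Theorem~\ref{thm:binary} and Lemma~\ref{lem:bipartitielemma} on the bipartite ``$U$ versus leaves of $T$'' graph. By Lemma~\ref{lem:bipartitielemma} (with the roles: $A:=U$, $B:=$ leaves, each $U$-vertex has a neighbor, and — after an application of Theorem~\ref{thm:binary} to bound degrees in $T$ or after passing to a subgraph — each leaf has boundedly many $U$-neighbors), we extract an induced matching of size $\ge \abs{U}/(\text{bound})\ge n$, pairing $n$ vertices $u_1,\dots,u_n\in U$ with distinct leaves $\ell_1,\dots,\ell_n$ of $T$, each $u_i$ adjacent to $\ell_i$ only. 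The exponent $2n-3$ and the Ramsey number $R(n,n)$ in $\mu(n)$ strongly suggest: apply Theorem~\ref{thm:binary} to get either a high-degree vertex in the relevant connected graph (which quickly yields the conclusion, or contradicts a bounded-degree assumption we can arrange) or a long induced path of length $\sim 2n$ in $T$; then a Ramsey argument $R(n,n)$ on the $U$-side along that path cleans the adjacency pattern to get $n$ vertices of $U$ attaching consistently to an induced subpath.

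Once we have an induced path $p_1p_2\cdots p_m$ inside $S$ whose internal vertices $\{p_2,\dots,p_{m-1}\}$ see no vertex of $A$ and each of the chosen $u_i\in U$ has a neighbor among $\{p_3,\dots,p_{m-1}\}$ and none in $\{p_1,p_2,p_m\}$ — which is exactly the hypothesis list of Lemma~\ref{lem:fancontract} with $q=u_i$ — I would contract $\{p_2,\dots,p_{m-1}\}$ to a single vertex $v$. Lemma~\ref{lem:fancontract} says the contraction is (isomorphic to) a vertex-minor; after the contraction $v$ is adjacent to exactly those $u_i$ that had a neighbor on the path, $v$ has no neighbor in $A$, and $G'[A\cup U']=G[A\cup U']$ because nothing inside $A\cup U'$ was touched. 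We must verify Lemma~\ref{lem:fancontract}'s ``no edges between $\{p_2,\dots,p_{m-1}\}$ and $V(G)\setminus(\{p_1,\dots,p_m\}\cup\{q\})$'' holds simultaneously for one contraction serving all $u_i$; this is the delicate bookkeeping point — one either takes the path so that \emph{all} of $U'$ attaches to it and nothing else of $S\setminus(\text{path})$ remains (prune $S$ first), or applies the contraction once and observes all $u_i$ are ``the $q$'' for the same path, which is fine since the only forbidden edges are between path-interior and outside, and $U\setminus U'$ has been discarded.

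The main obstacle I anticipate is precisely this last coordination: arranging a \emph{single} induced path in $S$ that simultaneously (a) has clean interior with respect to $A$, (b) has $n$ vertices of $U$ each hooking onto its interior at an endpoint-avoiding spot, and (c) carries no other attachments, all while the reductions via Lemma~\ref{lem:connected} preserve both connectivity and a $\mu(n)$-sized reservoir of $U$. Getting the quantitative bookkeeping to land exactly at $\mu(n)=(n-1)(R(n,n)^{2n-3}+1)$ — i.e., losing a factor $n-1$ to Lemma~\ref{lem:bipartitielemma}-type pruning and then surviving a $\text{Binary}(R(n,n),2n-1)$-style application of Theorem~\ref{thm:binary} followed by an $R(n,n)$ Ramsey cleanup — will require care, but the structure of the constant makes the intended decomposition fairly transparent.
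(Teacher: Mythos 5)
Your plan has the right building blocks but assembles them incorrectly, and the final step relies on a lemma that does not apply. The decisive problem is the appeal to Lemma~\ref{lem:fancontract} at the end. That lemma handles a single external vertex $q$ attached to the interior of a path: its hypothesis explicitly forbids \emph{any} edge from the interior $\{p_2,\dots,p_{m-1}\}$ to $V(G)\setminus(\{p_1,\dots,p_m\}\cup\{q\})$, and its proof uses this in an essential way (local complementation at $p_4$ removes exactly the two edges $qp_3$ and $qp_5$). In your setup there are $n$ vertices of $U$ each attached to the interior, so the hypothesis fails, and the argument collapses: smoothing an interior vertex now flips adjacencies among the several $u_i$'s sitting on it, so one cannot ``observe all $u_i$ are the $q$'' for the same path. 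The paper in fact never uses Lemma~\ref{lem:fancontract} in this proof.

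The paper's route is different at every stage after the matching. First, the bound ``each vertex of $S$ has at most $n-1$ neighbors in $U$'' is not obtained from Theorem~\ref{thm:binary} or from pruning $T$; it comes for free because a vertex of $S$ with $\ge n$ neighbors in $U$ already serves as the desired $v$. Lemma~\ref{lem:bipartitielemma} is then applied between $U$ and $S$ to get an induced matching of size $\mu(n)/(n-1)=R(n,n)^{2n-3}+1$ --- much larger than $n$, which is essential because the matching still needs to survive a Theorem~\ref{thm:binary}-plus-Ramsey cleanup. Next, Lemma~\ref{lem:connected} is used to peel away all of $S$ except the matched $s$-vertices while keeping them connected; local complementations inside $S$ never touch the $A$--$U$ subgraph nor the matching edges (since the deleted vertices have no neighbors in $U_1$), so the result $G_2$ satisfies $G_2-E(G_2[U_2])=G[U_1\cup U_2]-E(G[U_2])$. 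It is to the connected graph $G_2[U_2]$ --- not the tree $T$ --- that Theorem~\ref{thm:binary} is applied, giving either a vertex of degree $\ge R(n,n)$ (whereupon Ramsey gives an independent set of $n$ neighbors of $s_j$, and local complementing at each of them transfers their $u$-adjacencies onto $s_j$) or an induced path on $2n-1$ vertices (whereupon local complementations alternate along the path to push all $u$-adjacencies to the last vertex). Neither case involves contracting a path; both are direct local-complementation arguments. So the chief missing ideas in your write-up are (i) using the ``$\ge n$ neighbors in $U$'' shortcut to bound degrees, (ii) the Lemma~\ref{lem:connected} reduction to a connected $G_2[U_2]$ whose edges among the $s$'s are arbitrary but whose matching with $U$ is preserved, and (iii) the local-complementation endgame in place of Lemma~\ref{lem:fancontract}.
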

\begin{proof}
Let $m:=\mu(n)$ and let $U:=\{u_1, u_2, \ldots, u_m\}$.
For each $v\in N_G(U)\cap S$, let $P_v$ be a path from $v$ to $w$ in $G[S]$ with $N_G(U)\cap V(P_v)=\{v\}$.

Suppose $w\in N_G(U)\cap S$. By the assumption that for every $v\in N_G(U)\cap S$ it holds that $N_G(U)\cap V(P_v)=\{v\}$, 
we conclude that $N_G(U)\cap S=\{w\}$. Therefore, $w$ is adjacent to all vertices in $U$, and we are done.
We may assume that $w\in S\setminus N_G(U)$.

If there is a vertex in $S$ having at least $n$ neighbors on $U$, 
then we are done.
We may assume that every vertex in $S$ has at most $n-1$ neighbors on $U$.
By Lemma~\ref{lem:bipartitielemma}, there exist a subset $\{a_1, a_2, \ldots, a_{m/(n-1)}\}$ of $\{1, 2, \ldots, m\}$ and a subset $\{s_{1}, s_{2}, \ldots, s_{m/(n-1)}\}$ of $S$ such that 
\begin{itemize}
\item $u_{a_i}$ is adjacent to $s_j$ if and only if $i= j$.
\end{itemize}
Let $U_1:=A\cup \{u_{a_i}: 1\le i\le m/(n-1)\}$ and
$U_2:=\{s_i: 1\le i\le m/(n-1)\}$. 
Let $G_1:=G[U_1\cup U_2\cup (\bigcup_{v\in U_2} V(P_v))]$.
Note that $N_{G_1}(U_1)=U_2$, $G_1-U_1$ is connected, and every vertex in $V(G_1)\setminus (U_1\cup U_2)$ has no neighbors in $U_1$.

Choose a sequence of graphs $H_1, H_2, \ldots, H_y$ such that
\begin{enumerate}[(1)]
\item $H_1=G_1$, 
\item $V(H_y)=U_1\cup U_2$, 
\item for each $i\in \{1, 2, \ldots, y-1\}$, $H_{i+1}=H_i-v_i$ or $H_{i+1}=H_i*v_i-v_i$ for some $v_i\in V(H_i)\setminus (U_1\cup U_2)$, 
\item for each $i\in \{1, 2, \ldots, y\}$, $H_{i}-U_1$ is connected.
\end{enumerate}
We claim that such a sequence always exists. Let $H_1, H_2, \ldots, H_{y'}$ be a maximal sequence satisfying (1), (3), and (4).
Assume, to reach a contradiction, that $V(H_{y'})\neq U_1\cup U_2$. By the assumptions, we have $U_1\cup U_2\subseteq V(H_{y'})$, and therefore, $V(H_{y'})\setminus (U_1\cup U_2)\neq \emptyset$.
Let $v_{y'}\in V(H_{y'})\setminus (U_1\cup U_2)$.
Since $H_{y'}-U_1$ is connected, 
by Lemma~\ref{lem:connected}, 
$(H_{y'}-U_1)-v_{y'}$ or $(H_{y'}-U_1)*v_{y'}-v_{y'}$ is connected.
We fix $H_{y'+1}$ to be one of $(H_{y'}-U_1)-v_{y'}$ and $(H_{y'}-U_1)*v_{y'}-v_{y'}$ which is connected.  This contradicts the maximality of the sequence.
We conclude that there exists a sequence $H_1, H_2, \ldots, H_y$ satisfying (1) - (4). Let $G_2:=H_y$.
Note that 
\begin{itemize}
\item $V(G_2)=U_1\cup U_2$, 
\item $G_2[U_2]$ is connected, 
\item $G_2-E(G_2[U_2])=G_1[U_1\cup U_2]-E(G_1[U_2])=G[U_1\cup U_2]-E(G[U_2])$.
\end{itemize}

Since $\abs{U_2}=m/(n-1)= R(n,n)^{2n-3}+1$, by Theorem~\ref{thm:binary},  
$G_2[U_2]$ contains either a vertex of degree at least $R(n,n)$ or an induced path on $2n-1$ vertices. 

Suppose $G_2[U_2]$ contains a vertex $s_j$ of degree at least $R(n,n)$ for some $1\le j\le m/(n-1)$. 
Since $s_j$ has at least $R(n,n)$ neighbors in $U_2$, $N_{G_2}(s_j)\cap U_2$ 
contains either a clique of size $n$ or an independent set of size $n$. 
We define \[
    G_3:=
    \begin{cases}
      G_2-u_{a_j}&\text{if $N_{G_2}(s_j)\cap U_2$ contains an independent set of size $n$},\\
      (G_2-u_{a_j})*s_j &\text{otherwise}.
    \end{cases}
  \]
Note that $N_{G_3}(s_j)\cap U_2$ contains an independent set of size $n$. 
Let $\{s_{d_1}, s_{d_2}, \ldots, s_{d_n}\}$ be an independent set in $N_{G_3}(s_j)\cap U_2$.
Note that the application of a local complementation when we obtain $G_3$ does not change the adjacency relation between 
$\{s_{d_1}, s_{d_2}, \ldots, s_{d_n}\}$ and $\{u_{a_{d_1}}, u_{a_{d_2}}, \ldots, u_{a_{d_n}}\}$ as $s_j$ has no neighbors on $\{u_{a_{d_1}}, u_{a_{d_2}}, \ldots, u_{a_{d_n}}\}$ in $G_2$.
Let 
 \[G':=(G_3*s_{d_1}*s_{d_2}*\cdots *s_{d_n})[A\cup \{u_{a_{d_i}}:1\le i\le n\}\cup \{s_j\}].\]
 Then 
 we have $G'[A\cup \{u_{a_{d_i}}:1\le i\le n\}]=G[A\cup \{u_{a_{d_i}}:1\le i\le n\}]$, and
$s_j$ is adjacent to all vertices in $\{u_{a_{d_i}}:1\le i\le n\}$ and has no neighbors in $A$ in $G'$, 
as required.

Suppose $G_2[U_2]$ contains an induced path $s_{i_1}s_{i_2} \ldots s_{i_{2n-1}}$. 
Let 
\[G':=(G_3*s_{i_1}*s_{i_2}* \cdots *s_{i_{2n-2}})-\{s_{i_1}, s_{i_2}, \ldots, s_{i_{2n-2}}\}- \{u_{i_2}, u_{i_4}, u_{i_6}, \ldots, u_{i_{2n-2}}\}.\] 
Then in $G'$, $s_{i_{2n-1}}$ is adjacent to all of $u_{i_1}, u_{i_3}, u_{i_5}, \ldots, u_{i_{2n-1}}$, and 
$\{u_{i_1}, u_{i_3}, u_{i_5}, \ldots, u_{i_{2n-1}}\}$ is an independent set of size $n$, and 
$s_{i_{2n-1}}$ has no neighbor in $A$, as required.
\end{proof}

\subsection{From a partial wheel with many spokes}
\begin{lemma}\label{lem:fromlargerwheel}
  Let $n\ge 3$ be an integer and let $G$ be a graph such that $G-v$ is an induced cycle of length
  at least $n+3$. 
  If the degree of $v$ is at least $n$, then $G$ has $W_n$ as a vertex-minor.
\end{lemma}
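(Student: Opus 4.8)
The plan is to reduce, by vertex-minor operations, to the case where $v$ has exactly $n$ neighbours on the cycle. Indeed, if $G-v$ is an induced cycle $C$ with $\abs{V(C)}\ge n$ and $v$ has exactly $n$ neighbours on $C$, then $G$ is (isomorphic to) a subdivision of $W_n$ --- take $v$ as the hub and the $n$ neighbours as the branch vertices of the subdivided rim --- so $W_n$ is a vertex-minor of $G$ by the remark in Section~\ref{sec:prelim} that a graph is a vertex-minor of each of its subdivisions. The heart of the argument is therefore a reduction step: assuming $G-v$ is an induced cycle $C$ and $v$ has $k$ neighbours on $C$ with $n<k\le\abs{V(C)}$, produce a vertex-minor $G'$ of $G$ such that $G'-v$ is again an induced cycle, $v$ has strictly fewer but still at least $n$ neighbours on it, and the length of the cycle is still at least the number of neighbours of $v$. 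Since $v$'s neighbours all lie on $C$ we have $\abs{V(C)}\ge k$ at the start, and this inequality is preserved; so iterating the reduction step terminates at $\deg v=n$ with cycle length $\ge n$, where we finish as above.

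For the reduction step I would distinguish whether $G$ is a wheel, i.e. $k=\abs{V(C)}$. If it is not a wheel, then among all pairs of neighbours of $v$ that are consecutive in the cyclic order of neighbours along $C$, I choose one, say $c_a,c_b$, whose connecting arc of non-neighbours of $v$ is shortest; this arc has at most $\lfloor(\abs{V(C)}-k)/k\rfloor$ interior vertices, each of degree $2$ with non-adjacent cycle-neighbours, so smoothing them one at a time makes $c_a$ adjacent to $c_b$ without changing the neighbours of $v$. Thus we may assume $v$ has two adjacent neighbours $x\sim y$ on $C$; they lie in a maximal run of neighbours of length at least $2$ which --- since $G$ is not a wheel --- is flanked by a non-neighbour $a$ of $v$ with $a\sim x$ on $C$. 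Applying local complementation at $x$ toggles exactly the three pairs inside $\{a,y,v\}$: it adds the edge $ay$, makes $a$ a neighbour of $v$, and makes $y$ a non-neighbour of $v$; then deleting $x$ heals the cycle through the new edge $ay$, so $(G*x)-x$ has $k-1$ neighbours of $v$ and a cycle shorter by one, which preserves the inequality. If instead $G$ is the wheel $W_{\abs{V(C)}}$ with $\abs{V(C)}\ge n+3$, I would first apply local complementation at a rim vertex and delete it: this adds one rim chord (healing the cycle after the deletion) and removes exactly two spokes, yielding a non-wheel with the cycle shorter by one and exactly $\abs{V(C)}-3\ge n$ neighbours of $v$; the non-wheel case then applies.

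The routine but delicate part is the bookkeeping in the reduction step: one must check that the smoothed vertices genuinely have degree $2$ with non-adjacent cycle-neighbours, that the edges a local complementation turns ``on'' were genuinely non-edges, and that $G-v$ stays an \emph{induced} cycle after each operation (a smoothing and the local complementation at $x$ each add exactly one chord, which is absorbed as a cycle edge once the relevant vertex is deleted) --- all of which hold because the cycle length remains at least $k\ge n+1\ge 4$ throughout. One also checks that the neighbour count never overshoots below $n$: the only step decreasing it by more than $1$ is the wheel step, which is applied only when the count is at least $n+3$ and leaves a non-wheel, after which every step decreases the count by exactly $1$. I expect the main obstacle to be organizing these case distinctions and invariants cleanly: there is no single hard idea beyond the short local-complementation computation, but keeping the invariant ``cycle length $\ge$ number of neighbours of $v$'' alive through the smoothings in the shortest-arc step (and making sure the shortest arc is short enough, via the averaging bound $\lfloor(\abs{V(C)}-k)/k\rfloor$) is the point that needs care.
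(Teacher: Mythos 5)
Your argument is correct, and it takes a genuinely different route from the paper's. The paper also iterates local complementations on cycle vertices, but it runs an induction on $s+t$ (cycle length plus $\deg v$): while $s>t$ it smooths a non-neighbour of $v$ to shorten the cycle, it applies the same $(s,t)\mapsto(s-1,t-3)$ wheel reduction when $s=t>n+3$, and it bottoms out at the base cases $t=n$ or $s=n+3$, the latter handled by four explicit local-complementation computations, one for each $t\in\{n,n+1,n+2,n+3\}$. Your extra move --- local complementing at a neighbour $x$ sitting at the end of a maximal run of consecutive neighbours, which swaps the neighbour $y$ for the flanking non-neighbour $a$ and decreases $\deg v$ by exactly one --- is what lets you bypass those base cases and drive $\deg v$ straight down to $n$. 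One spot to tighten: you track the non-strict invariant $\lvert V(C)\rvert\ge\deg v$, but to justify that the wheel reduction is applied at most once (so the degree never drops by three when it is below $n+3$) you actually need the strict inequality $\lvert V(C)\rvert>\deg v$ to be preserved by the non-wheel step. It is: the initial wheel step leaves $\lvert V(C)\rvert-\deg v=2$, and in the non-wheel step the shortest arc has $j\le\lfloor(s-t)/t\rfloor<s-t$ interior vertices, so $(s-j-1)-(t-1)=s-t-j>0$; but your write-up asserts this (``after which every step decreases the count by exactly $1$'') rather than verifying it.
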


\begin{proof}
Let $s$ be the length of the induced cycle $G-v$. Let $v_1, \ldots, v_s$ be the vertices of the cycle in a cyclic order and let $v_{s+\ell}:=v_{\ell}$ for $1 \le \ell \le s$. Let $t$ be the degree of $v$. Note that $s\ge t$. 

We prove by induction on $s+t$. 
The following statements cover base cases which are either $t=n$ or $s=n+3$:
\begin{itemize} 
\item (Case 1. $t=n$.) Let $v_{i_1}, v_{i_2}, \ldots, v_{i_{s-t}}$ be the vertices of $G-v$ that are non-adjacent to $v$ in $G$. The graph obtained from $G$ by smoothing $v_{i_1}, v_{i_2}, \ldots, v_{i_{s_t}}$ is isomorphic to $W_n$. 
\item (Case 2. $s=n+3$ and $t=n+1$.) Let $v_i, v_j$ be the vertices in $G-v$ that are non-adjacent to $v$. Note that we may assume that $i\neq j+1$ and $i\neq j+2$ by symmetry. The graph $(G*v_i*v_{j+1}*v_{j+2})-\{v_i, v_{j+1}, v_{j+2}\}$ is isomorphic to $W_n$.  
\item (Case 3. $s=n+3$ and $t=n+2$.) Let $v_i$ be the vertex in $G-v$ that is non-adjacent to $v$. The graph $(G*v_{i+1}*v_i*v_{i-1})-\{v_{i-1}, v_i, v_{i+1}\}$ is isomorphic to $W_n$. 
\item (Case 4. $s=n+3$ and $t=n+3$.) Let $v_i$ be a vertex in $G-v$. The graph $(G*v_i*v_{i-1}*v_{i+1})-\{v_{i-1}, v_i, v_{i+1}\}$ is isomorphic to $W_n$. 
\end{itemize}
We may assume that $s>n+3$ and $t>n$. 
Suppose that $s>t$. There exists $i$ such that $v$ is not adjacent to $v_i$ and adjacent to $v_{i+1}$. Let $G_1=G*v_{i}-v_{i}$. Then $G_1-v$ is an induced cycle of length $s-1\ge n+3$ and the degree of $v$ is $t$ in $G_1$. By induction hypothesis, $G_1$ has $W_n$ as a vertex-minor, which implies that $G$ has $W_n$ as a vertex-minor. 
Now, we may assume that $s=t>n+3$. For a vertex $u$ in $G-v$, let $G_2=G*u-u$. Then $G_2-v$ is an induced cycle of length $s-1\ge n+3$ and the degree of $v$ is $t-3\ge n$ in $G_2$. By induction hypothesis, $G_2$ has $W_n$ as a vertex-minor, which implies that $G$ has $W_n$ as a vertex-minor. 
\end{proof}

\subsection{From drums, clams, and hanging ladders}
\begin{figure}
  \centering
  \begin{tikzpicture}[yscale=0.5]
    \tikzstyle{every node}=[circle,draw,fill=black!50,inner sep=0pt,minimum width=4pt]
    \draw (0,0) circle (2);
    \foreach \y in {0,1,2,3,4,5,6} {
      \draw (180+30*\y:2) node (c\y) {}
      --+ (0,-1) node {}
      --+ (0,-2) node (d\y) {};
    }
    \draw (d0)--(d1)--(d2)--(d3)--(d4)--(d5)--(d6);
  \end{tikzpicture}
  \caption{A drum}
  \label{fig:drum}
\end{figure}

A \emph{drum} on $3n$ vertices 
is the graph on the vertex set $\{v_1,v_2,\ldots,v_n,w_1,w_2,\ldots,w_n,u_1,u_2,\ldots,u_n\}$ such that $v_1v_2\cdots v_n$ is an induced path, 
$w_1w_2\cdots w_n$ is an induced cycle, 
each $u_i$ is adjacent only to $v_i$ and $w_i$,
and there are no edges between $\{v_1,v_2,\ldots,v_n\}$ and $\{w_1,w_2,\ldots,w_n\}$. See Figure~\ref{fig:drum} for an illustration.

\begin{lemma}\label{lem:drum}
  For an integer $n\ge 3$, 
  a drum on $3(2n-1)$ vertices has $W_n$ as a vertex-minor.
\end{lemma}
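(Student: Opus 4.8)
The plan is to turn the drum into a subdivision of $W_n$ by contracting its path onto one endpoint and then discarding enough of the resulting rungs. Fix $m=2n-1$ and label the drum so that $v_1\cdots v_m$ is the induced path, $w_1\cdots w_m$ is the induced cycle, and each rung $u_i$ is adjacent exactly to $v_i$ and $w_i$.

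First I would \emph{collapse the path onto $v_m$}: process $v_1,v_2,\dots,v_{m-1}$ in this order, at each step replacing the current graph $G$ by $(G*v_k)-v_k$; for $k=1$ this is just smoothing $v_1$, whose only neighbours $v_2$ and $u_1$ are non-adjacent. An easy induction shows that just before $v_k$ is processed one has $N(v_k)=\{v_{k+1},u_1,\dots,u_k\}$: local complementation at $v_k$ toggles all adjacencies inside this set and then $v_k$ is deleted, so $v_{k+1}$ -- previously adjacent to no $u_i$ with $i\le k$ -- inherits all of $u_1,\dots,u_k$ while keeping $v_{k+2}$ and $u_{k+1}$. None of these moves changes an edge of $w_1\cdots w_m$ or an edge $u_iw_i$, and $u_m$ never leaves its neighbourhood $\{v_m,w_m\}$. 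Hence the resulting graph $D'$ has vertex set $\{v_m\}\cup\{u_1,\dots,u_m\}\cup\{w_1,\dots,w_m\}$, with $v_m$ adjacent to every rung, $w_1\cdots w_m$ an induced cycle, $u_iw_i\in E(D')$ for all $i$, and no edge joining $v_m$ or a rung to the cycle other than the $u_iw_i$.

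The only edges of $D'$ left to pin down are those among the rungs. For $i<j$ the pair $u_iu_j$ gets toggled once for each $k$ with $j\le k\le m-1$, hence $m-j$ times in total (and never when $j=m$); starting from a non-edge, this gives $u_iu_j\in E(D')$ if and only if $j<m$ and $m-j$ is odd. As $m=2n-1$ is odd, this fails for every odd $j$, so $\{u_1,u_3,\dots,u_{2n-1}\}$ is an independent set in $D'$, each of whose vertices is adjacent only to $v_m$ and to one cycle-vertex. To finish, I would delete the $n-1$ even rungs $u_2,u_4,\dots,u_{2n-2}$; then each of $w_2,w_4,\dots,w_{2n-2}$ has exactly two neighbours, its two cycle-neighbours, which are non-adjacent because $w_1\cdots w_m$ is an induced cycle of length $2n-1\ge 5$. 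Smoothing these $n-1$ vertices replaces the big cycle by an induced $n$-cycle through $w_1,w_3,\dots,w_{2n-1}$ and changes nothing else, leaving exactly $W_n$ with every spoke subdivided once: hub $v_m$, rim $w_1w_3\cdots w_{2n-1}$, spokes $v_m\,u_{2i-1}\,w_{2i-1}$. Smoothing each $u_{2i-1}$ (once more a degree-two vertex with non-adjacent neighbours) then yields $W_n$ itself, so $W_n$ is a vertex-minor of the drum.

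The step I expect to need the most care is the parity bookkeeping for the rung--rung edges as $v_1,\dots,v_{m-1}$ are absorbed: it is precisely this computation that forces $m$ to be odd, and together with the need for at least $n$ odd-indexed rungs it explains the exact bound $m=2n-1$. The remaining ingredients -- the inductive formula for $N(v_k)$, the fact that each move is a legitimate vertex-minor operation (a local complementation followed by a deletion, or a smoothing), and the recognition of the final graph as a once-subdivided $W_n$ -- are routine.
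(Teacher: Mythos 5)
Your proof is correct, and it takes the same route as the paper: apply local complementation at $v_1,\dots,v_{2n-2}$ so that $v_{2n-1}$ becomes adjacent to every rung while the odd-indexed rungs stay pairwise non-adjacent, then delete the even rungs and observe (after smoothing the remaining degree-two vertices) that what is left is a subdivision of $W_n$. The paper states these facts without the explicit induction on $N(v_k)$ or the parity count of the rung--rung toggles, so your write-up is simply a more detailed version of the same argument.
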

\begin{proof}
  Let $G$ be a drum on $3(2n-1)$ vertices with the vertex labels as in the definition of drums.
  Let $H=G*v_1*v_2*v_3\cdots*v_{2n-2}$.
  Then, in $H$, $v_{2n-1}$ is adjacent to all of $u_1$, $u_2$, $\ldots$, $u_{2n-1}$. Furthermore $\{u_1,u_3,u_5,\ldots,u_{2n-1}\}$ is an independent set in $H$. Thus, 
  \[H-\{u_2, u_4, u_6, \ldots, u_{2n-2}, v_1, v_2, \ldots, v_{2n-2}\}\] is isomorphic to a subdivision of $W_n$.
\end{proof}

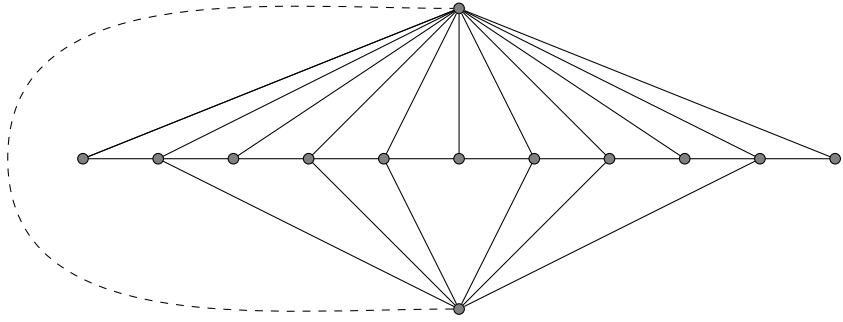
\begin{figure}
  \centering
  \begin{tikzpicture}
    \tikzstyle{every node}=[circle,draw,fill=black!50,inner sep=0pt,minimum width=4pt]
    \draw (5,2) node (c){}--(0,0)--(10,0);
    \draw (5,-2) node (z){};
    \foreach \z in {1,3,4,6,7,9}{
      \draw (\z,0)--(z);
      }
    \foreach \y in {0,...,10}{
      \draw (\y,0) node (c\y) {} -- (c);
    }
    \draw[dashed] (z) [out=180]to [in=-90] (-1,0)[out=90] to [in=180](c);
  \end{tikzpicture}
  \caption{A clam. (A dashed line may be an edge or not.)}\label{fig:clam}
\end{figure}
For an integer $n\equiv 1\pmod 3$, 
a \emph{clam} on $n$ vertices 
is a graph on the vertex set $\{v_1,v_2,\ldots,v_{n-2},h_1,h_2\}$
such that $v_1v_2\ldots v_{n-2}$ is an induced path, 
$h_1$ is adjacent to all of $v_1$, $v_2$, $\ldots$, $v_{n-2}$, 
and $h_2$ is adjacent to $v_i$ if and only if $1<i<n-2$ and $i\not\equiv 0\pmod 3$.  Note that $h_1$ and $h_2$ may be adjacent in a clam.

\begin{lemma}\label{lem:clam}
  For an integer $n\ge 2$, a clam on $3n+4$ vertices contains $W_{2n}$ or $W_{2n+1}$ as a vertex-minor.
\end{lemma}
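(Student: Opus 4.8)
The plan is to imitate the strategy used for drums (Lemma~\ref{lem:drum}): perform local complementations along the induced path $v_1 v_2 \cdots v_{n-2}$ so that the high-degree apex $h_1$ gets moved to a single path vertex with many independent neighbours among the former path, and then delete the intermediate path vertices to expose a subdivision of a wheel, using the second apex $h_2$ and the congruence pattern of its neighbourhood to control which chords survive. Write $G$ for a clam on $3n+4$ vertices, so the induced path has $3n+2$ vertices $v_1,\dots,v_{3n+2}$, with $h_1$ complete to the path and $h_2$ adjacent to exactly those $v_i$ with $1<i<3n+2$ and $i\not\equiv 0\pmod 3$. The key point is that $h_2$'s neighbours come in consecutive pairs $\{v_2,v_3\},\{v_5,v_6\},\{v_8,v_9\},\dots$, separated by the single non-neighbours $v_4,v_7,v_{10},\dots$; there are $n$ such pairs and $n-1$ such gaps inside the path.

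First I would apply local complementations at $v_1,v_2,\dots,v_{3n+1}$ in order (as in the drum lemma): after $G*v_1*v_2*\cdots*v_{3n+1}$, the vertex $v_{3n+2}$ becomes adjacent to $h_1$ (if it was not already via $h_1h_2$) and, more importantly, $h_1$ is now adjacent to all of $v_1,\dots,v_{3n+1}$ with no edges among a suitably chosen independent subset of them, while the path $v_1\cdots v_{3n+2}$ has been turned into something whose structure near each $v_i$ I control. The honest version: I want to reduce to the situation of Lemma~\ref{lem:fancontract} or Lemma~\ref{lem:fromlargerwheel}. A cleaner route is to first delete, via smoothing and local complementation along the path, every third path vertex — the vertices $v_4,v_7,\dots,v_{3n-2}$, i.e.\ the ``gap'' vertices that are non-adjacent to $h_2$ — so that the remaining path contracts down to the $n$ consecutive pairs, each pair now forming a block both of whose vertices see $h_1$ and $h_2$. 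Contracting each pair (using the contraction-as-vertex-minor machinery, e.g.\ Lemma~\ref{lem:fancontract}) identifies it to a single vertex adjacent to both $h_1$ and $h_2$, and the $n$ resulting vertices together with whatever connects consecutive blocks along the old path form an induced path (or cycle, after using the dashed edge / an extra pivot) of length about $n$ with two vertices $h_1,h_2$ each complete to it.

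At that point we have, essentially, a ``double wheel'' or a near-complete bipartite-like configuration: an induced path $z_1 z_2 \cdots z_n$ with two apices $h_1,h_2$ each adjacent to all $z_i$. Pivoting the edge $h_1 z_1$ (or a well-chosen spoke) then transfers $h_2$'s adjacencies and kills roughly half the spokes from $h_2$ to the $z_i$'s in a controlled alternating pattern — this is exactly the chord-removal trick sketched in Figure~\ref{fig:badcasenewway}: $h_1$'s spokes are the chords we want, $h_2$'s spokes are the tool for cancelling the unwanted ones, and because the two apices have no spoke sharing an endpoint after the pivot, the unwanted chords disappear. The resulting graph is a subdivision of $W_{n'}$ for $n'$ equal to $2\lceil (n-1)/2\rceil$-ish; tracking the arithmetic, from $3n+4$ vertices one lands on $W_{2n}$ or $W_{2n+1}$ according to parity, which is exactly the two-case conclusion in the statement.

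The main obstacle I expect is bookkeeping: carefully verifying that after the sequence of local complementations along the path, deletions of the gap vertices, and the final pivot, the surviving graph is \emph{induced}-isomorphic to a subdivision of a wheel and not to a wheel with a few extra or missing chords — in particular handling the boundary vertices $v_1,v_2,v_{3n+1},v_{3n+2}$ and the optional edge $h_1h_2$ (the dashed line in Figure~\ref{fig:clam}), which shifts the parity and is precisely why the conclusion must allow either $W_{2n}$ or $W_{2n+1}$. I would organize the proof as an explicit sequence of operations with a figure, as in Lemma~\ref{lem:drum}, rather than an induction, since the clam already has a rigid enough structure that a direct construction is cleanest.
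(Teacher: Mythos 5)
Your high-level instinct — act along the path with local complementations, delete the ``gap'' vertices not seen by $h_2$, and let the presence/absence of the edge $h_1h_2$ explain the two-case conclusion — is the right instinct and is what the paper does. But there are two concrete problems.

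First, you have the congruence class backwards. For a clam on $3n+4$ vertices, $h_2$ is adjacent to $v_i$ iff $1<i<3n+2$ and $i\not\equiv 0\pmod 3$. So the non-neighbours of $h_2$ are $v_3,v_6,\ldots,v_{3n}$ (the indices $\equiv 0\bmod 3$), not $v_4,v_7,\ldots,v_{3n-2}$ as you wrote, and the ``pair'' $\{v_2,v_3\}$ you list is not a pair of $h_2$-neighbours at all. This is not cosmetic: the vertices $v_3,v_6,\ldots$ have degree exactly $3$ (neighbours $v_{i-1},v_{i+1},h_1$), whereas the vertices you propose to remove have degree $4$ (they see $h_2$ as well), so the cleanup operations you have in mind behave differently on them.

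Second, the rest of your plan is substantially more complicated than what is needed, and its key step is not clearly valid. The paper simply sets $H=G*v_3*v_6*\cdots*v_{3n}$ and deletes those vertices. Crucially, each of these local complementations has $h_1$ in its neighbourhood, so it \emph{removes} the edges $h_1v_{3k-1}$ and $h_1v_{3k+1}$ and adds $v_{3k-1}v_{3k+1}$; after deleting $v_3,v_6,\ldots,v_{3n}$ the remaining path vertices together with $h_1$ form an induced cycle of length $2n+3$, and $h_2$ is adjacent to exactly $2n$ of its vertices (plus possibly $h_1$). So $h_1$ gets absorbed into the rim automatically — there is no ``double apex'' configuration left over, no pair-contraction via Lemma~\ref{lem:fancontract}, and no chord-cancelling pivot. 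Your proposed pivot on $h_1z_1$ in the double-apex picture does not do what you claim: working out the pivot explicitly, it strips almost all of $h_2$'s spokes rather than alternating ones, so you would not be left with a subdivision of a wheel of the advertised size. If you correct the index arithmetic and notice that the local complementations at the gap vertices already demote $h_1$ to a rim vertex, the extra contraction and pivot steps disappear and you recover the paper's short argument.
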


\begin{proof}
  Let $G$ be a clam on $3n+4$ vertices 
  with the vertex labels as in the definition.
  Let $H=G*v_3*v_6*v_9*\cdots*v_{3n}$.
  In $H$, $h_1$ is non-adjacent to $\{v_2, v_4, v_5, v_7, \ldots, v_{3n-1}, v_{3n+1}\}$,
  and $h_1v_1v_2v_4v_5v_7v_8\cdots v_{3n-1}v_{3n+1}v_{3n+2}h_1$ is an induced cycle of length $2n+3$.
  Still in $H$, $h_2$ is adjacent to $2n$ vertices among $v_1$, $v_2$, $v_4$, $v_5$, $v_7$, $v_8$, $\ldots$, $v_{3n-1}$, $v_{3n+1}$, $v_{3n+2}$.
  Thus, $H-\{v_3, v_6, v_9, \cdots, v_{3n}\}$ is isomorphic to a subdivision of $W_{2n}$ or $W_{2n+1}$, depending on whether or not $h_2$ is adjacent to $h_1$ in $H$.
\end{proof}

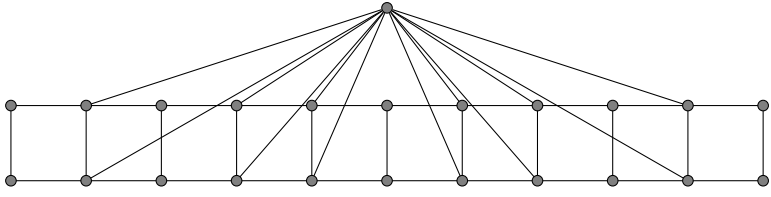
\begin{figure}
  \centering
  \begin{tikzpicture}
    \tikzstyle{every node}=[circle,draw,fill=black!50,inner sep=0pt,minimum width=4pt]
    \draw (5,2.3) node (c){};
    \draw (0,0)--(10,0);
    \draw (0,1)--(10,1);
    \foreach \z in {1,3,4,6,7,9}{
      \draw (\z,0)--(c)--(\z,1);
      }
    \foreach \y in {0,...,10}{
      \draw (\y,0) node (c\y) {}--
      (\y,1) node (d\y) {};
    }
  \end{tikzpicture}
  \caption{A hanging ladder}
  \label{fig:hanging}
\end{figure}
A \emph{hanging ladder} on $6n+5$ vertices is a graph on the vertex set $\{v_1,v_2,\ldots,v_{3n+2},w_1,w_2,\ldots,w_{3n+2},c\}$ such that
\begin{itemize}
\item 
$v_i$ is adjacent to $w_j$ if and only if $i=j$, 
\item $v_1v_2\ldots v_{3n+2}$ and $w_1w_2\ldots w_{3n+2}$ are induced paths,
\item the set of neighbors of $c$ is 
  $\{v_i,w_i: 1<i<3n+2, i\not\equiv 0\pmod 3\} $.
\end{itemize}
\begin{lemma}\label{lem:hangingladder}
  For an integer $n\ge 2$, a hanging ladder on $6n+5$ vertices contains $W_{4n}$ as a vertex-minor.
\end{lemma}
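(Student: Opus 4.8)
The plan is to pivot the rungs $v_3w_3,v_6w_6,\dots,v_{3n}w_{3n}$ in turn, then delete every vertex $v_{3k}$ and $w_{3k}$, and to verify that the resulting graph $G'$ is isomorphic to a subdivision of $W_{4n}$. Since every graph is a vertex-minor of each of its subdivisions (Section~\ref{sec:prelim}) and $G'$ is a vertex-minor of $G$ (it is obtained from $G$ by local complementations and vertex deletions), this gives that $G$ has $W_{4n}$ as a vertex-minor.

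First I would check that the pivots do not interfere. In $G$, the closed neighborhoods of $v_{3k}$ and of $w_{3k}$ are contained in $\{v_{3k-1},v_{3k},v_{3k+1}\}\cup\{w_{3k-1},w_{3k},w_{3k+1}\}$, and these sets are pairwise disjoint for distinct $k$ and avoid $c$; hence the pivots commute, each $v_{3k}w_{3k}$ is still an edge when its turn comes, and each pivot acts on exactly the neighborhoods it had in $G$. Using the description of pivoting in Section~\ref{sec:prelim}, with $N_G(v_{3k})\setminus(N_G(w_{3k})\cup\{w_{3k}\})=\{v_{3k-1},v_{3k+1}\}$, $N_G(w_{3k})\setminus(N_G(v_{3k})\cup\{v_{3k}\})=\{w_{3k-1},w_{3k+1}\}$, and $N_G(v_{3k})\cap N_G(w_{3k})=\emptyset$, pivoting $v_{3k}w_{3k}$ removes the rungs $v_{3k-1}w_{3k-1}$ and $v_{3k+1}w_{3k+1}$, adds the edges $v_{3k-1}w_{3k+1}$ and $v_{3k+1}w_{3k-1}$, swaps the labels of $v_{3k}$ and $w_{3k}$ (immaterial, as both are deleted), and changes nothing else. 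Therefore $G'$ has vertex set $\{c\}\cup\{v_i,w_i:1\le i\le 3n+2,\ 3\nmid i\}$ and exactly the following edges: the path-edges $v_{3k+1}v_{3k+2}$ and $w_{3k+1}w_{3k+2}$ for $0\le k\le n$; the two surviving rungs $v_1w_1$ and $v_{3n+2}w_{3n+2}$ (each rung $v_iw_i$ with $3\nmid i$ and $2\le i\le 3n+1$ equals $v_{3k\pm1}w_{3k\pm1}$ for a unique $k\in\{1,\dots,n\}$, hence is deleted by the corresponding pivot); the $2n$ crossing edges $v_{3k-1}w_{3k+1}$ and $v_{3k+1}w_{3k-1}$ for $1\le k\le n$; and the $4n$ edges from $c$ to each $v_i$ and $w_i$ with $2\le i\le 3n+1$ and $3\nmid i$ (the pivots never touch $c$).

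The core step is to see that $G'-c$ is a single cycle of length $4n+4$. Group the remaining vertices into blocks $B_k=\{v_{3k+1},v_{3k+2},w_{3k+1},w_{3k+2}\}$ for $0\le k\le n$. The block $B_0$ induces the path $v_2v_1w_1w_2$ and $B_n$ induces the path $v_{3n+1}v_{3n+2}w_{3n+2}w_{3n+1}$, the two internal vertices of each being exactly the non-neighbors of $c$ in that block, each middle block induces the two disjoint edges $v_{3k+1}v_{3k+2}$ and $w_{3k+1}w_{3k+2}$, and the only edges of $G'-c$ joining two different blocks are, for each $k\in\{1,\dots,n\}$, the crossing pair $v_{3k-1}w_{3k+1}$ and $v_{3k+1}w_{3k-1}$ between $B_{k-1}$ and $B_k$, which join the $v$-side of one block to the $w$-side of the other. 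Tracing edges from $v_1$, one runs through $B_0$ to $v_2$, crosses into the $w$-side of $B_1$, then into the $v$-side of $B_2$, and so on up through the blocks, alternating sides, until $B_n$; one then runs through the whole path of $B_n$ and comes back down through $B_{n-1},\dots,B_1$, in each block using the side opposite to the one used on the way up, finally closing up through $w_2,w_1$ back to $v_1$. Every vertex is visited exactly once; since $G'-c$ is $2$-regular on $4n+4$ vertices, it is a single cycle. On this cycle the only vertices not adjacent to $c$ are $v_1,w_1,v_{3n+2},w_{3n+2}$, which appear as two pairs of consecutive vertices, each pair flanked by neighbors of $c$. Hence $G'$ is isomorphic to the graph obtained from $W_{4n}$ by subdividing two of its rim edges into paths of length three, that is, to a subdivision of $W_{4n}$, and the lemma follows.

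The main obstacle is the bookkeeping in the two previous paragraphs: one must be sure that the composed pivots produce exactly the stated edge set (each rung $v_iw_i$ with $3\nmid i$ and $2\le i\le 3n+1$ flipped once, the rungs at $i=1$ and $i=3n+2$ left untouched, and no spurious edges between $v$'s and $w$'s), and, above all, that the crossing edges $v_{3k-1}w_{3k+1}$, $v_{3k+1}w_{3k-1}$ knit the blocks into one long cycle rather than several short ones; the \emph{twist} built into these crossing edges is precisely what forces connectivity, and keeping track of it while moving up and then back down through the blocks is the only delicate point.
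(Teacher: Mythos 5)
Your proposal is correct and follows essentially the same approach as the paper: pivot $v_{3k}w_{3k}$ for $k=1,\dots,n$, then observe that the remaining $v_i,w_i$ with $3\nmid i$ induce a cycle on which $c$ has $4n$ neighbors, giving a subdivision of $W_{4n}$. You work out the local pivot effects, the commutativity, the block-by-block cycle tracing, and the final vertex count in considerably more detail than the paper's one-paragraph argument, but the underlying idea is identical.
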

\begin{proof}
  Let $G$ be the hanging ladder on $6n+5$ vertices with the labels as in the definition.
  Let $H= G\pivot v_{3}w_3\pivot v_6w_6\pivot\cdots \pivot v_{3n}w_{3n}$.
  Then the vertex set $\{v_i,w_i: 1\le i \le 3n+2, i\not\equiv 0\pmod 3\}$ induces a cycle in $H$.
  As $c$ is still adjacent to $4n$ vertices on this cycle, $H$ is isomorphic to a subdivision of $W_{4n}$.
\end{proof}
\subsection{From extended drums}\label{subsec:extendeddrum}

An \emph{extended drum} of order $n$ 
is a graph $G$ on the vertex set $\{w_1,w_2,\ldots,w_n, u_1, u_2, \ldots, u_n\}\cup S$ such that 
\begin{itemize}
\item $S$ and $\{w_1,w_2,\ldots,w_n, u_1, u_2, \ldots, u_n\}$ are disjoint, 
\item $w_1w_2\cdots w_nw_1$ is an induced cycle, 
\item $\{u_1, u_2, \ldots, u_n\}$ is an independent set,
\item $u_i$ is adjacent to $w_j$ if and only if $i=j$,
\item each $u_i$ has a neighbor in $S$, 
\item there are no edges between $S$ and $\{w_1, \ldots, w_n\}$.
\item there exists a vertex $w\in S$ where 
for every $v\in N_G(\{u_1, u_2, \ldots, u_n\})\cap S$, there is a path $P$ from $v$ to $w$ in $G[S]$ with $N_G(\{u_1, u_2, \ldots, u_n\})\cap V(P)=\{v\}$.
\end{itemize}

\begin{lemma}\label{lem:extendeddrum}
  For an integer $n\ge 3$, 
  an extended drum of order $\mu(n)$ has $W_n$ as a vertex-minor.
\end{lemma}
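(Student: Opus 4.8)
The plan is to feed the extended drum directly into Lemma~\ref{lem:reduceconnected} and then clean up the resulting structure with Lemma~\ref{lem:fromlargerwheel}.

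First I would apply Lemma~\ref{lem:reduceconnected} with $A=\{w_1,\ldots,w_{\mu(n)}\}$, $U=\{u_1,\ldots,u_{\mu(n)}\}$, and $S$ as given. The defining properties of an extended drum are tailored so that the five hypotheses of Lemma~\ref{lem:reduceconnected} hold verbatim: $A$, $U$, $S$ are pairwise disjoint; there are no edges between $A$ and $S$; $U$ is an independent set; each $u_i$ has a neighbor in $S$; and the distinguished vertex $w\in S$ together with the private-path condition is precisely the last bullet of the definition. Since $\abs{U}=\mu(n)$, Lemma~\ref{lem:reduceconnected} yields $U'\subseteq U$ with $\abs{U'}\ge n$, a vertex $v\in S$, and a vertex-minor $G'$ of $G$ on $A\cup U'\cup\{v\}$ such that $G'[A\cup U']=G[A\cup U']$, the vertex $v$ is adjacent to every vertex of $U'$, and $v$ has no neighbor in $A$.

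Next I would pass to an $n$-element subset $U''=\{u_{i_1},\ldots,u_{i_n}\}\subseteq U'$, so that $G_1:=G'[A\cup U''\cup\{v\}]$ is still a vertex-minor of $G$. In $G_1$ each $u_{i_j}$ has exactly two neighbors, namely $w_{i_j}$ (its unique neighbor in $A$, inherited from the extended drum, using that $U''$ is independent) and $v$; moreover these two are non-adjacent because $v$ has no neighbor in $A$. Hence each $u_{i_j}$ may be smoothed, and smoothing $u_{i_j}$ toggles only the pair $w_{i_j}v$, so the smoothings do not interfere with one another. The resulting graph $G_2$, again a vertex-minor of $G$, consists of the induced cycle $w_1w_2\cdots w_{\mu(n)}w_1$ together with the vertex $v$ whose neighborhood is exactly $\{w_{i_1},\ldots,w_{i_n}\}$.

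Finally, $G_2-v$ is an induced cycle of length $\mu(n)$, which exceeds $n+3$ for every $n\ge 3$ since $\mu(n)=(n-1)(R(n,n)^{2n-3}+1)\ge 2(6^3+1)$, and $\deg_{G_2}(v)=n$; Lemma~\ref{lem:fromlargerwheel} then gives that $G_2$, and hence $G$, has $W_n$ as a vertex-minor. The only points needing care are checking that an extended drum satisfies the hypotheses of Lemma~\ref{lem:reduceconnected}, which is immediate from the definitions, and verifying that the successive smoothings remain valid; there is no serious obstacle here, as the substance of the argument has been packaged into Lemma~\ref{lem:reduceconnected}.
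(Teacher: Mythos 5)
Your proof is correct and follows the same strategy as the paper: feed the extended drum into Lemma~\ref{lem:reduceconnected} and then recognize the resulting configuration as a (subdivision of a) wheel. The paper states the conclusion more tersely by observing that $G'$ is a subdivision of $W_n$, whereas you make the reduction to an exactly-$n$-element subset explicit and route the finish through Lemma~\ref{lem:fromlargerwheel}; this is a small variation in bookkeeping, not in substance, and in fact patches a minor imprecision in the paper's wording (which speaks of a subdivision of $W_n$ even though $\abs{U}$ could exceed $n$).
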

\begin{proof}
Let $G$ be an extended drum of order $\mu(n)$.
By Lemma~\ref{lem:reduceconnected}, 
there exist $U\subseteq \{u_1, u_2, \ldots, u_{\mu(n)}\}$ with $\abs{U}\ge n$ and $v\in S$ and a graph $G'$ on
$\{w_1, w_2, \ldots, w_{\mu(n)}\}\cup U\cup \{v\}$ such that
\begin{itemize}
\item $G'[\{w_1, w_2, \ldots, w_{\mu(n)}\}\cup U]=G[\{w_1, w_2, \ldots, w_{\mu(n)}\}\cup U]$,
\item $v$ is adjacent to all vertices in $U$ and has no neighbors in $\{w_1, w_2, \ldots, w_{\mu(n)}\}$ in $G'$, 
\item $G'$ is a vertex-minor of $G$.
\end{itemize}
Then $G'$ is a subdivision of $W_n$, and therefore, $G$ contains a vertex-minor isomorphic to $W_n$.
\end{proof}

\subsection{From extended clams}\label{subsec:extendedclam}

An \emph{extended clam} of order $n$ 
is a graph $G$ on the vertex set 
\[\{p_1, p_2, \ldots, p_{2n}, v_1,v_2,\ldots,v_n, w_1, w_2, \ldots, w_n,h\} \cup S\]
such that 
\begin{itemize}
\item $S$ and $\{p_1, p_2, \ldots, p_{2n}, v_1,v_2,\ldots,v_n, w_1, w_2, \ldots, w_n,h\}$ are disjoint, 
\item $p_1p_2\cdots p_{2n}$ is an induced path, 
\item $\{v_1, \ldots, v_n, w_1, \ldots, w_n\}$ is an independent set, 
\item $v_i$ is adjacent to $p_j$ if and only if $j=2i-1$,
\item $w_i$ is adjacent to $p_j$ if and only if $j=2i$, 
\item $h$ is adjacent to all vertices in $\{v_1,\ldots, v_{n}\}$, but non-adjacent to $\{p_1, \ldots, p_{2n}\}\cup S$,  
\item each $w_i$ has a neighbor in $S$, and there are no edges between $S$ and $\{v_1, \ldots, v_n, p_1, \ldots, p_{2n}\}$, 
\item there exists a vertex $w\in S$ where 
for every $v\in N_G(\{w_1, w_2, \ldots, w_n\})\cap S$, there is a path $P$ from $v$ to $w$ in $G[S]$ with $N_G(\{w_1, w_2, \ldots, w_n\})\cap V(P)=\{v\}$.
\end{itemize}
The \emph{simple} extended clam is an extended clam such that $S$ consists of one vertex $z$ that is adjacent to all vertices in $\{w_1, \ldots, w_n\}$ and
$h$ is adjacent to all vertices in $\{w_1, \ldots, w_n\}$. 

\begin{lemma}\label{lem:extendedsimpleclam}
  For an integer $n\ge 2$, the simple extended clam of order $2n+1$ contains a clam on $3n+4$ vertices as a vertex-minor, and thus 
  contains  $W_{2n}$ or $W_{2n+1}$ as a vertex-minor.
\end{lemma}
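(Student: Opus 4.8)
The goal is to reduce the simple extended clam of order $2n+1$ to a clam on $3n+4$ vertices, after which Lemma~\ref{lem:clam} with parameter $n$ finishes the job (a clam on $3n+4 = 3n+4$ vertices contains $W_{2n}$ or $W_{2n+1}$). So the real content is: starting from the simple extended clam, produce a clam as a vertex-minor. The simple extended clam of order $2n+1$ has the induced path $p_1p_2\cdots p_{4n+2}$, the independent set $\{v_1,\ldots,v_{2n+1},w_1,\ldots,w_{2n+1}\}$ with $v_i\sim p_{2i-1}$ and $w_i\sim p_{2i}$ only, the hub $h$ adjacent to all $v_i$ and all $w_i$ (and nothing on the path), and the single vertex $z\in S$ adjacent to all $w_i$ (and nothing on the path or among the $v_i$).

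**Key steps.** First I would use the $w_i$'s and $z$ to eliminate the alternate path vertices $p_2,p_4,\ldots$ by local complementation, the same way drums and clams are collapsed in Lemmas~\ref{lem:drum} and~\ref{lem:clam}: locally complementing at $z$ turns the independent set $\{w_1,\ldots,w_{2n+1}\}$ into a clique (and makes $h$ adjacent to all of them if it wasn't already — in the simple clam $h$ already is), and then pivoting or locally complementing along the $w_i$–$p_{2i}$ edges lets me contract the even-indexed path vertices. Concretely, for each even index $2i$ the vertex $w_i$ has exactly two neighbors on the relevant structure after cleanup, so a local complementation at $w_i$ followed by deletions smooths $p_{2i}$ out of the path while transferring $h$'s adjacency appropriately; doing this for a carefully chosen subset of the even indices shortens the path from $4n+2$ vertices down to $3n+2$ vertices, which is exactly the path length $v_1\cdots v_{n-2}$ of a clam on $3n+4$ vertices. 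The point of keeping only some of the $w_i$'s active is to get the clam's second hub $h_2$ adjacency pattern "$v_i$ iff $1<i<n-2$ and $i\not\equiv 0\pmod 3$": $h$ plays the role of $h_1$ (adjacent to everything on the resulting path), and $z$ — after its local complementation has redistributed adjacencies through the $w_i$'s — plays the role of $h_2$, picking up exactly the non-multiple-of-3 interior vertices. So the combinatorial bookkeeping is to line up the indices $1,\ldots,2n+1$ of the $w_i$'s against residues mod $3$ so that after contraction the surviving path has $3n+2$ vertices and $z$'s neighborhood on it is precisely the clam pattern.

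**Main obstacle.** The hard part is the index arithmetic: getting the arithmetic-progression structure of a clam ($h_2$ hits residues $\not\equiv 0 \pmod 3$) to emerge from the simple extended clam, where the natural structure is a parity pattern (odd indices carry $v_i$, even indices carry $w_i$) with order $2n+1$. I would need to choose which $w_i$ to use for contraction and which to delete so that (a) the final path length is exactly $3n+2$, (b) $h$ ends adjacent to all of it, and (c) $z$ ends adjacent exactly to the interior vertices whose position is $\not\equiv 0\pmod 3$ — and to verify that the local complementations do not create unwanted chords among path vertices or between $h$ and $z$ and the path (other than the dashed edge $h_1h_2$ which the clam permits). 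Tracking these adjacency changes carefully through the sequence of local complementations, exactly as in the proofs of Lemmas~\ref{lem:drum}, \ref{lem:clam}, and~\ref{lem:extendeddrum}, is the routine-but-delicate core of the argument; once the clam on $3n+4$ vertices is exhibited as a vertex-minor, Lemma~\ref{lem:clam} immediately yields $W_{2n}$ or $W_{2n+1}$.
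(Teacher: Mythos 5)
You correctly identify the goal (reduce to a clam on $3n+4$ vertices and then invoke Lemma~\ref{lem:clam}) and the hub roles ($h$ as $h_1$, $z$ as $h_2$), but the sequence of operations you sketch is not the paper's and the parts you leave unverified are precisely where it fails. Your first step, locally complementing at $z$ to make $\{w_1,\ldots,w_{2n+1}\}$ a clique, is counterproductive: once the $w_i$ are pairwise adjacent, each subsequent $*w_i$ spills edges between $p_{2i}$ and the other $w_j$'s, and the structure becomes unmanageable. The paper keeps the $w_i$ pairwise non-adjacent throughout, so that each $*w_i$ touches exactly the triple $\{p_{2i},z,h\}$; applying $G*w_1*\cdots*w_{2n}$ then makes $z$ and $h$ cleanly adjacent to $p_2,p_4,\ldots,p_{4n}$ and changes nothing else of relevance. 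Note also that $*z$ cannot ``redistribute $z$'s adjacencies'' in the way you describe: local complementation at a vertex never alters that vertex's own neighbourhood, so $z$ acquires its clam-relevant neighbours only through the later complementations at the $w_i$, not from $*z$.

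More importantly, you have the branch/subdivision roles backwards. You propose smoothing out the $p_{2i}$ to shorten the path, but in the paper's construction the vertices $p_{2i}$ ($1\le i\le 2n$) are exactly the degree-$4$ branch vertices of the clam (adjacent to $p_{2i-1}$, $p_{2i+1}$, $h$, $z$) and must be retained. The mod-$3$ pattern of $h_2$ is produced not by selectively keeping some $w_i$ ``active'' (all of $w_1,\ldots,w_{2n+1}$ are deleted) but by deleting the interior odd-indexed $v_j$, namely $v_3,v_5,\ldots,v_{2n-1}$, together with $p_{4n+2}$: the surviving $v_j$ become degree-$2$ subdivision vertices on edges from $h$ to the path, while $p_5,p_9,\ldots,p_{4n-3}$ become degree-$2$ subdivision vertices along the path, and the remaining graph is then literally a subdivision of a clam on $3n+4$ vertices. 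Producing and verifying that subdivision is the entire content of the lemma; since your proposal outsources this to ``routine-but-delicate bookkeeping'' without carrying it out, and the plan as stated (complement at $z$, then eliminate the $p_{2i}$) would not complete it, there is a genuine gap.
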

\begin{proof}
Let $G$ be the simple extended clam of order $2n+1$, and let 
$G_1:=G*w_1*w_2*w_3* \cdots *w_{2n}$.
In $G_1$, both $z$ and $h$ are adjacent to $p_{2i}$ for all $i\in \{1, \ldots, 2n\}$.
Then \[G_1 - \{w_1,w_2,\ldots,w_{2n+1},v_3,v_5,\ldots,v_{2n-1},p_{4n+2}\}\] is a subdivision of a clam on 
$ (4n+2)-(n-1+1) +2= 3n+4$
vertices. By Lemma~\ref{lem:clam}, it contains a vertex-minor isomorphic to $W_{2n}$ or $W_{2n+1}$.
\end{proof}

\begin{lemma}\label{lem:extendedclam}
  For an integer $n\ge 3$, an extended clam of order $\mu(n)+\mu(2n+1)-1$
  contains $W_n$ as a vertex-minor.
\end{lemma}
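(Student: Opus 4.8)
The plan is to reduce an extended clam of sufficiently large order to a simple extended clam by cleaning up the set $S$, and then invoke Lemma~\ref{lem:extendedsimpleclam}. The key point is that an extended clam already carries two independent "gadget" families: the vertices $v_i$ (attached to the odd path vertices and all pulled together by $h$) and the vertices $w_i$ (attached to the even path vertices and reaching into $S$ along internally-disjoint paths). The role of $S$ is exactly the role played by the set $S$ in Lemma~\ref{lem:reduceconnected}: using the $w_i$ and the path structure inside $G[S]$, we should be able to create a single new vertex $z$ adjacent to many of the $w_i$'s and to nothing else we care about, realized as a vertex-minor.

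First I would set $A := \{p_1,\ldots,p_{2n},v_1,\ldots,v_n,h\}$ (where here $n$ is the order of the extended clam), $U := \{w_1,\ldots,w_n\}$, and keep $S$ as is; by hypothesis there are no edges between $A$ and $S$, $U$ is independent, each $w_i$ has a neighbor in $S$, and there is a vertex $w\in S$ with the required internally-disjoint-path condition. So Lemma~\ref{lem:reduceconnected} applies: provided $|U|\ge \mu(n')$ for the target wheel size $n'$, we obtain $U'\subseteq U$ with $|U'|\ge n'$, a vertex $z\in S$, and a vertex-minor $G'$ on $A\cup U'\cup\{z\}$ in which $G'[A\cup U']=G[A\cup U']$, and $z$ is adjacent to all of $U'$ and to no vertex of $A$. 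Now $G'$ restricted to $\{p_i\}\cup\{v_j\}\cup\{w_k : w_k\in U'\}\cup\{h,z\}$ is almost a simple extended clam of order $|U'|$: the $p$-path, the $v$–$p$ and $w$–$p$ attachments, and $h$'s adjacencies to the $v_j$ are all intact, $h$ is non-adjacent to the $p_i$ and to $z$, and $z$ is adjacent to all the $w_k$. The one discrepancy from the definition of a \emph{simple} extended clam is that $h$ need not be adjacent to the $w_k$; but since $h$ has no neighbor in $S$ in the original graph, and local complementations used inside Lemma~\ref{lem:reduceconnected} are performed at vertices of $S$, $h$'s adjacency to the $w_k$ is whatever it was — which we may not control. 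I would handle this by noting that this adjacency is irrelevant: re-examining the proof of Lemma~\ref{lem:extendedsimpleclam}, the operations $*w_1*\cdots*w_{2n}$ followed by the deletions produce a clam regardless of whether $h$–$w_k$ edges were present, since after complementing at each $w_k$ the relevant surviving vertex is $p_{2i}$ and $h$'s status with respect to $p_{2i}$ is determined by its status with respect to $v$-vertices and $w_k$-vertices in a way that still yields a (possibly different, but still valid) clam structure. Alternatively, and more cleanly, I would simply prove the stronger version of Lemma~\ref{lem:extendedsimpleclam} allowing arbitrary $h$–$w_k$ adjacency.

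Concretely, the arithmetic: to get a $W_n$ vertex-minor we want, after reducing, a simple extended clam of order $2m+1$ with $2m\ge n$, i.e.\ order at least roughly $n+1$; Lemma~\ref{lem:extendedsimpleclam} needs order $2m+1$, and Lemma~\ref{lem:reduceconnected} needs $|U|\ge\mu(2m+1)$. Tracking this through, an extended clam of order $\mu(n)+\mu(2n+1)-1$ (the stated bound) is more than enough: one first extracts $\mu(2n+1)$-ish many $w_i$'s via Lemma~\ref{lem:reduceconnected} to build $z$, leaving enough $v_i$'s and $p_i$'s untouched, then applies (the general form of) Lemma~\ref{lem:extendedsimpleclam} to land a $W_{2m}$ or $W_{2m+1}$ with $2m+1\ge n$, and finally uses Lemma~\ref{lem:fromlargerwheel} to pass from a bigger wheel down to $W_n$ (after checking the cycle length is at least $n+3$, which holds since the wheel produced has order linear in $n$).

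The main obstacle I anticipate is the bookkeeping at the interface: making sure that after applying Lemma~\ref{lem:reduceconnected} with $U=\{w_1,\ldots,w_n\}$, the \emph{induced} structure on the retained $p$-path and $v$-vertices is genuinely untouched (it is, since those lie in $A$ and Lemma~\ref{lem:reduceconnected} guarantees $G'[A\cup U']=G[A\cup U']$), and then reconciling the output with the precise definition of a simple extended clam — in particular the $z$–$h$ and $h$–$w_k$ adjacencies. Once one observes that $z$–$h$ is non-adjacent (since $h$ has no neighbor in $S$ and $z\in S$, and the deletions/local complementations at $S$-vertices never introduce an $h$-incident edge as $h\notin N_G(S)$ throughout) and that the $h$–$w_k$ adjacency can be absorbed into a mildly generalized Lemma~\ref{lem:extendedsimpleclam}, the rest is a routine chase through the inequalities $\mu(\cdot)$ and an appeal to Lemma~\ref{lem:fromlargerwheel}.
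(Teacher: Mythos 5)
Your plan skips the key step of the paper's proof and substitutes for it a claim that does not hold. You apply Lemma~\ref{lem:reduceconnected} to $U=\{w_1,\ldots,w_N\}$ (where $N=\mu(n)+\mu(2n+1)-1$) and then assert that the $h$--$w_k$ adjacency on the surviving $w_k$'s is ``irrelevant,'' or can be absorbed by a mildly generalized Lemma~\ref{lem:extendedsimpleclam}. Neither version of this assertion is correct. In the proof of Lemma~\ref{lem:extendedsimpleclam}, the local complementation $*w_k$ makes $h$ adjacent to $p_{2k}$ precisely when $hw_k\in E(G)$, because $p_{2k}$ gains an edge to $h$ only if $h\in N(w_k)$; with arbitrary $h$--$w_k$ adjacency, $h$'s neighborhood on $\{p_{2},p_4,\ldots\}$ after the complementations is exactly the (arbitrary) set $\{p_{2k}: hw_k\in E(G)\}$, which in general is nowhere near the alternating pattern a clam needs. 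In the extreme case where $h$ has no neighbor among the surviving $w_k$'s, $h$ is not adjacent to any $p_{2k}$ at all, and the resulting configuration is not a clam --- it is a drum/extended-drum-type configuration, which requires a genuinely different target lemma (Lemma~\ref{lem:extendeddrum}).

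The paper resolves this by a pigeonhole argument \emph{before} applying Lemma~\ref{lem:reduceconnected}: among the $\mu(n)+\mu(2n+1)-1$ indices, there is a set $I$ of size at least $\mu(n)$ on which $h$ is anti-complete to $\{w_i:i\in I\}$, or a set of size at least $\mu(2n+1)$ on which $h$ is complete. In the anti-complete case one does not form a clam at all: the cycle $hv_1p_1p_2\cdots p_{2N}v_Nh$ together with the $w_i$ ($i\in I$) and $S$ gives a subdivision of an extended drum of order $\mu(n)$, and Lemma~\ref{lem:extendeddrum} finishes. Only in the complete case does one restrict to the $\{w_i:i\in I\}$, apply Lemma~\ref{lem:reduceconnected}, and obtain a bona fide \emph{simple} extended clam of order $2n+1$, to which Lemma~\ref{lem:extendedsimpleclam} applies. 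Note also that the threshold $\mu(n)+\mu(2n+1)-1$ is not ``more than enough'' slack as you suggest; it is exactly the pigeonhole threshold guaranteeing one of the two cases, and you cannot replace the pigeonhole with a post-hoc one after Lemma~\ref{lem:reduceconnected} without inflating the bound (you would need the order to be roughly $\mu(4n+1)$ rather than $\mu(n)+\mu(2n+1)-1$). So the missing idea is the case split on $h$'s adjacency to the $w_i$'s and the use of Lemma~\ref{lem:extendeddrum} in the anti-complete case.
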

\begin{proof}
Let $G$ be an extended clam or order $\mu(n)+\mu(2n+1)-1$.
There exists $I\subseteq \{1, \ldots, \mu(n)+\mu(2n+1)-1\}$ such that either 
\begin{itemize}
\item $\abs{I}\ge \mu(n)$ and $h$ is anti-complete to $\{w_i : i\in I\}$, or 
\item $\abs{I}\ge \mu(2n+1)$ and $h$ is complete to $\{w_i : i\in I\}$.
\end{itemize}
When $\abs{I}\ge \mu(n)$ and $h$ is anti-complete to $\{w_i : i\in I\}$, $G$ contains a subdivision of an extended drum of order $\mu(n)$, with the cycle $hv_1p_1p_2 ...p_{(2\mu(n)+2\mu(2n+1)-3)}v_{(\mu(n)+\mu(2n+1)-1)}h$. 
Then by Lemma~\ref{lem:extendeddrum}, it contains a vertex-minor isomorphic to $W_n$.
We may assume that $\abs{I}\ge \mu(2n+1)$ and $h$ is complete to $\{w_i : i\in I\}$. 
Let $G_1:=G-\{v_i,w_i: i\in \{1, 2, \ldots, \mu(n)+\mu(2n+1)-1\}\setminus I\}$, 
and let $A:=V(G_1)\setminus (\{w_i: i\in I\} \cup S)$.

By Lemma~\ref{lem:reduceconnected}, 
there exist $U\subseteq \{w_i : i\in I\}$ with $\abs{U}\ge 2n+1$ and $v\in S$ and a graph $G_2$ on
$A\cup U\cup \{v\}$ such that
\begin{itemize}
\item $G_2[A\cup U]=G[A\cup U]$,
\item $v$ is adjacent to all vertices in $U$ and has no neighbors in $A$ in $G_2$, 
\item $G_2$ is a vertex-minor of $G$.
\end{itemize}
Then $G_2$ is a subdivision of the simple extended clam of order $2n+1$, and therefore, $G$ contains a vertex-minor isomorphic to $W_{2n}$ or $W_{2n+1}$ by Lemma~\ref{lem:extendedsimpleclam}.
\end{proof}

\subsection{From extended hanging ladders}\label{subsec:extendedladder}

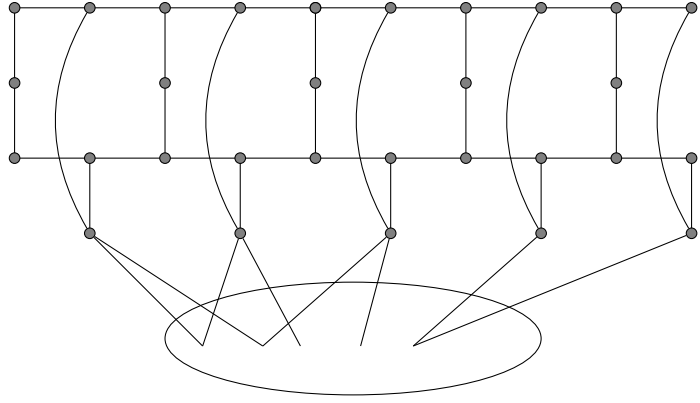
\begin{figure}
  \centering
  \begin{tikzpicture}
    \tikzstyle{every node}=[circle,draw,fill=black!50,inner sep=0pt,minimum width=4pt]

   \draw (1,0)--(10,0);
   \draw (1,2)--(10,2);
    \foreach \y in {1,...,5}{
    
	\draw (2*\y-1,2)--(2*\y-1,1) --(2*\y-1, 0);
	\draw (2*\y,-1) --(2*\y, 0);
      \draw (2*\y-1,2) node (f\y) {};
      \draw (2*\y-1,1) node () {};
      \draw (2*\y,2) node (g\y) {};
      \draw (2*\y,-1) node (e\y) {};
  \draw(e\y) [in=-120,out=120] to (g\y);
    }

    \foreach \y in {1,...,10}{
      \draw (\y,0) node (c\y) {};
    }
      \draw (5,2) node (h) {};

 \draw (e1)--(5.3-1, -2.5);
 \draw (e1)--(5.3-1.8, -2.5);
 \draw (e2)--(5.3-0.5, -2.5);
 \draw (e2)--(5.3-1.8, -2.5);
 \draw (e3)--(5.6, -2.5);
 \draw (e3)--(5.3-1, -2.5);
 \draw (e4)--(5.3+1, -2.5);
 \draw (e5)--(5.3+1, -2.5);
 
 \draw[yscale=0.3] (5.5,-8) circle (2.5);
 \node[draw opacity=0,fill opacity=0,text opacity=1] at (5.3-3, -2.5) {$S$};

  \end{tikzpicture}
  \caption{A simple extended hanging ladder of order $5$.}\label{fig:simplehangingladder}
\end{figure}

For integers $t, n\ge 2$, a  \emph{$t$-extended hanging ladder} of order $n$ is a graph $G$ on the vertex set 
\[\{p_1,p_2,\ldots,p_{2n},v_1, v_2, \ldots, v_n, w_1,w_2,\ldots,w_n\}\cup \{ q_1, q_2, \ldots, q_r\} \cup S\] for some $r$ such that
\begin{itemize}
\item $S$ and $\{p_1,p_2,\ldots,p_{2n},v_1, v_2, \ldots, v_n, w_1,w_2,\ldots,w_n\}\cup \{ q_1, q_2, \ldots, q_r\}$ are disjoint, 
\item $p_1p_2\cdots p_{2n}$, $q_1q_2\cdots q_r$ are induced paths, and $p_i$ is not adjacent to $q_j$,
\item $\{v_1, \ldots, v_n, w_1, \ldots, w_n\}$ is an independent set, 
\item $v_i$ is adjacent to $p_j$ if and only if $j=2i-1$, 
\item $w_i$ is adjacent to $p_j$ if and only if $j=2i$,
\item there exists 
a sequence $1\le b_1< b_2<\cdots< b_n<b_{n+1}=r+1$ such that 
for each $i\in \{1, \ldots, n\}$, 
$v_{i}$ is adjacent to $q_{b_i}$
and non-adjacent to $q_{x}$ for all $x\in \{1, \ldots, r\}\setminus\{b_i,b_i+1,b_i+2,\ldots,b_{i+1}-1\}$,
\item every $w_i$ has a neighbor in $S$, and has at most $t-1$ neighbors on $\{q_1, q_2, \ldots, q_r\}$,
\item there are no edges between $S$ and $\{p_1,p_2,\ldots,p_{2n},v_1, v_2, \ldots, v_n\} \cup \{q_1, q_2, \ldots, q_r\}$, 
\item there exists a vertex $w\in S$ where 
for every $v\in N_G(\{u_1, u_2, \ldots, u_n\})\cap S$, there is a path $P$ from $v$ to $w$ in $G[S]$ with $N_G(\{u_1, u_2, \ldots, u_n\})\cap V(P)=\{v\}$.
\end{itemize}
A \emph{simple} extended hanging ladder is a $t$-extended hanging
ladder for some $t\ge 2$ such that 
\begin{itemize}
\item $r=2n$, 
\item $v_i$ is adjacent to $q_j$ if and only if $j=2i-1$, 
\item $w_i$ is adjacent to $q_j$ if and only if $j=2i$.
\end{itemize} 
Note that the value $t$ is not important in a simple extended hanging ladder because every $w_i$ has exactly one neighbor on $\{q_1, q_2, \ldots, q_r\}$.
We depict a simple hanging ladder in Figure~\ref{fig:simplehangingladder}.

\begin{lemma}\label{lem:extendedhangingladder1}
  For an integer $n\ge 3$, a simple extended hanging ladder of order 
  $\mu(2n+2)$ contains $W_{4n}$ as a vertex-minor.
\end{lemma}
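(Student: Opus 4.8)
The plan is to reduce a simple extended hanging ladder to a subdivision of the hanging ladder on $6n+5$ vertices and then apply Lemma~\ref{lem:hangingladder}. Write $N=\mu(2n+2)$ for the order and let $G$ be a simple extended hanging ladder of order $N$, with vertices $p_1,\dots,p_{2N}$, $v_1,\dots,v_N$, $w_1,\dots,w_N$, $q_1,\dots,q_{2N}$, and $S$.

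First I would compress $S$ using Lemma~\ref{lem:reduceconnected}, applied with $U=\{w_1,\dots,w_N\}$ and $A=\{p_1,\dots,p_{2N},v_1,\dots,v_N,q_1,\dots,q_{2N}\}$; the defining properties of a simple extended hanging ladder supply exactly the hypotheses of that lemma (no edges between $A$ and $S$, $U$ independent, each $w_i$ with a neighbor in $S$, and the ``path to a fixed vertex of $S$'' condition). Since $\abs U=\mu(2n+2)$, we obtain a set $I$ with $\abs I\ge 2n+2$, a vertex $c\in S$, and a vertex-minor $G'$ of $G$ on $A\cup\{w_i:i\in I\}\cup\{c\}$ with $G'[A\cup\{w_i:i\in I\}]=G[A\cup\{w_i:i\in I\}]$ in which $c$ is adjacent to every $w_i$ with $i\in I$ and to nothing else. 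Next I would discard everything outside $I$: delete $v_i$ for $i\notin I$, then repeatedly smooth the (now degree at most two) vertices $p_{2i-1},p_{2i},q_{2i-1},q_{2i}$ for $i\notin I$, deleting instead of smoothing the degree-one vertices that turn up at the ends of the two paths. Writing $I=\{i_1<\dots<i_m\}$ with $m\ge 2n+2$ and relabeling, this leaves a graph $\hat G$ consisting of two disjoint induced paths $a_1a_2\cdots a_{2m}$ (the surviving $p$'s) and $b_1b_2\cdots b_{2m}$ (the surviving $q$'s) with no edges between them, vertices $x_1,\dots,x_m$ (with $x_k$ adjacent exactly to $a_{2k-1}$ and $b_{2k-1}$), vertices $y_1,\dots,y_m$ (with $y_k$ adjacent exactly to $a_{2k}$, $b_{2k}$, $c$), and $c$ (adjacent exactly to $y_1,\dots,y_m$).

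The core step is to cut a subdivided hanging ladder out of $\hat G$. Inside $\{1,\dots,2m\}$ I would fix ``rung positions'' $j_1<j_2<\dots<j_{3n+2}$ so that $j_t$ is odd exactly when $t\in\{1,3n+2\}$ or $t\equiv 0\pmod 3$, and even otherwise. A greedy choice (at each step take the smallest integer of the required parity that exceeds the previous choice) never uses a position larger than $4n+1$, and $4n+1\le 4n+4\le 2m$, so such positions exist. For each $k$ with $2k$ among the $j_t$, I would apply local complementation at $y_k$; these operations commute, because the $y_k$ form an independent set whose only common neighbor is $c$, and together they add exactly the edges $a_{2k}b_{2k}$, $ca_{2k}$, $cb_{2k}$ for each such $k$, leaving the paths and the $x_\ell$ untouched. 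I would then delete every $y_k$ and every $x_k$ whose position is not one of the $j_t$, and smooth away every $a_j$ or $b_j$ with $j\notin\{j_1,\dots,j_{3n+2}\}$. Checking the two kinds of surviving rung position --- an odd $j_t$ gives a length-two path $a_{j_t}\,x_k\,b_{j_t}$ with $c$ adjacent to neither end, and an even $j_t$ gives an edge $a_{j_t}b_{j_t}$ with $c$ adjacent to both ends --- shows that the result is a subdivision of the hanging ladder on $6n+5$ vertices, the two paths forming its two sides and the indices $t$ with $j_t$ odd being exactly the positions where the apex is nonadjacent. Since a graph is a vertex-minor of each of its subdivisions, Lemma~\ref{lem:hangingladder} and transitivity of the vertex-minor relation then give that $G$ has $W_{4n}$ as a vertex-minor.

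The step I expect to be the main obstacle is the verification in the last paragraph: making sure that every vertex scheduled for smoothing really does have two non-adjacent neighbors at the moment it is smoothed, that the local complementations at the $y_k$ do not disturb the paths or the other $x_\ell$, and that at the end $c$ is adjacent to precisely the $4n$ intended vertices; one must also pin down the greedy bound $j_{3n+2}\le 4n+1$ exactly. Once the target subdivision is fixed, the sequence of local complementations, deletions, and smoothings is routine.
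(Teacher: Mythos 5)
Your proof is correct and follows the paper's strategy at the same level: apply Lemma~\ref{lem:reduceconnected} to compress $S$ to a single apex adjacent to a subset of at least $2n+2$ of the $w_i$, then build a subdivision of the hanging ladder on $6n+5$ vertices and invoke Lemma~\ref{lem:hangingladder}. The paper simply asserts that a specific sequence of local complementations (at $w_{i_1},\dots,w_{i_{2n+1}}$ and then at $v_{i_1},v_{i_2},v_{i_4},\dots,v_{i_{2n+2}}$) yields an induced subgraph isomorphic to the desired subdivision, whereas you first clean up everything outside $I$, then apply local complementations only at selected $w$-vertices and keep the $v$-vertices as subdivision points on the rungs, with an explicit ``rung position'' scheme and a verified greedy bound $j_{3n+2}=4n+1\le 2m$. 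Your worked-out construction is more transparent than the paper's one-line claim, but it is the same decomposition, the same two key lemmas, and the same target.
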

\begin{proof}
Let $G$ be a simple extended hanging ladder of order $\mu(2n+2)$, and let $A:=V(G)\setminus (S\cup \{w_1, w_2, \ldots, w_{\mu(2n+2)}\})$.
By Lemma~\ref{lem:reduceconnected}, 
there exist $U\subseteq \{w_1, w_2, \ldots, w_{\mu(2n+2)}\}$ with $\abs{U}\ge 2n+2$ and $v\in S$ and a graph $G'$ on
$A\cup U\cup \{v\}$ such that
\begin{itemize}
\item $G'[A\cup U]=G[A\cup U]$,
\item $v$ is adjacent to all vertices in $U$ and has no neighbors in $A$ in $G'$, 
\item $G'$ is a vertex-minor of $G$.
\end{itemize}
 Let $U:=\{w_{i_1}, w_{i_2}, \ldots, w_{i_{2n+2}}\}$ where $i_1<i_2<\cdots <i_{2n+2}$.
Then 
\[G_1*w_{i_1}*w_{i_2}* \cdots *w_{i_{2n+1}} *v_{i_1}*v_{i_2}*v_{i_4}*v_{i_6}*\cdots *v_{i_{2n+2}}  \] 
contains an induced subgraph isomorphic to a subdivision of a hanging ladder on $6n+5$ vertices, 
and by Lemma~\ref{lem:hangingladder}, it contains a vertex-minor isomorphic to $W_{4n}$.
\end{proof}

\begin{lemma}\label{lem:extendedhangingladder2}
  For every integer $n\ge 3$, there exists an integer $L=L(n)$ such that an $n$-extended hanging ladder of order  $L$ contains $W_n$ as a vertex-minor.
\end{lemma}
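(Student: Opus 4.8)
The plan is to handle an $n$-extended hanging ladder $G$ of sufficiently large order $L$ by first applying Lemma~\ref{lem:reduceconnected} to the set $\{w_1,\dots,w_L\}$ together with $S$, exactly as in the proof of Lemma~\ref{lem:extendedhangingladder1}. Here $A:=V(G)\setminus(S\cup\{w_1,\dots,w_L\})$ plays the role of the set $A$ in Lemma~\ref{lem:reduceconnected}: there are no edges between $A$ and $S$, the $w_i$ form an independent set, each $w_i$ has a neighbor in $S$, and the path condition on $S$ is part of the definition of an extended hanging ladder. So provided $L\ge\mu(m)$ for an appropriate $m$ to be chosen, we obtain a vertex $v\in S$, a subset $U\subseteq\{w_1,\dots,w_L\}$ with $\abs{U}\ge m$, and a vertex-minor $G'$ of $G$ on $A\cup U\cup\{v\}$ in which $v$ is complete to $U$ and anti-complete to $A$, while $G'[A\cup U]=G[A\cup U]$.

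After this reduction, the difficulty is that, unlike in the simple case, the vertices $w_i\in U$ may each have up to $n-1$ neighbors on the path $q_1\cdots q_r$, with the neighborhoods arranged arbitrarily. The next step is therefore a clean-up on the $q$-path. I would restrict attention to the vertices $q_j$ that are actually used as a $b_i$ (i.e.\ the neighbors of the surviving $v_i$'s) together with, for each surviving $w_i\in U$, its $\le n-1$ neighbors on $\{q_1,\dots,q_r\}$, and smooth or delete the remaining degree-two vertices of the $q$-path, keeping the path structure intact (this is legitimate by the smoothing remark in Section~\ref{sec:prelim}, and Lemma~\ref{lem:fancontract}-style contractions can absorb longer runs). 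Then, because each $w_i$ has at most $n-1$ neighbors on the shortened $q$-path, I would apply Corollary~\ref{cor:regularpartition} (the regular partition lemma) to the sequence of neighborhood-sets of the $w_i$'s along the $q$-path: this produces a large sub-collection $U'\subseteq U$ and a partition of the $q$-path into at most $n-1$ subpaths, on each of which the $w_i\in U'$ either all have the identical neighborhood or have neighborhoods occurring in a strictly monotone (consecutive) order. One of these at most $n-1$ parts must contain a positive fraction of the ``action'', so by pigeonhole I can pass to a large $U''\subseteq U'$ whose members behave uniformly — either identically, or monotonically — on the relevant segment; the $v_i$-endpoints $q_{b_i}$ can be incorporated into this analysis since each $v_i$ sits in the interval $[b_i,b_{i+1}-1]$.

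The endgame is to recognize that, after these reductions, the graph on $A\cup U''\cup\{v\}$ contains as a vertex-minor one of the already-handled gadgets. Concretely: if the $w_i\in U''$ all have the identical neighborhood on the active segment, then the $p$-path, the $v_i$'s, the $w_i$'s, and $v$ form (a subdivision of) a drum or an extended drum, so Lemma~\ref{lem:drum} or Lemma~\ref{lem:extendeddrum} applies; if the neighborhoods are arranged in consecutive monotone order, then pivoting along the matched $p_iw_i$ (respectively $q_{b_i}$) edges produces a long induced cycle through the $p$- and $q$-vertices with $v$ adjacent to many of them, i.e.\ a subdivision of a hanging ladder, so Lemma~\ref{lem:hangingladder} (or Lemma~\ref{lem:extendedhangingladder1}) applies. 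Choosing $m$ — hence $L(n)$ — large enough (roughly $\mu$ of $N'(n-1,\cdot)$ of the various constants appearing in Lemmas~\ref{lem:drum}, \ref{lem:clam}, \ref{lem:hangingladder} and their extended versions, with extra slack for the $(n-1)$-fold pigeonhole) guarantees that whichever case occurs, the surviving gadget is big enough to contain $W_n$. The main obstacle I anticipate is bookkeeping the interaction between the $v_i$-edges to the $q$-path (which come with the prescribed interval structure $b_i\le x<b_{i+1}$) and the arbitrary $w_i$-edges: one has to apply the regular partition lemma to a combined neighborhood pattern and then argue that the monotone/identical dichotomy on the $w_i$'s is compatible with the fixed $v_i$-pattern, so that the pivots used to straighten out the $w_i$-chords do not destroy the cycle through the $v_i$'s — this is exactly the kind of ``which pivots are safe'' argument illustrated in Figure~\ref{fig:badcasenewway}.
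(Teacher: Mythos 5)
Your proposal is a genuinely different route from the paper's. The paper does \emph{not} use the regular partition lemma here and does \emph{not} begin by collapsing $S$: instead it first proves, via Claims~\ref{claim:nonnbrs}--\ref{claim:consecutivenbr2}, that every relevant $w_i$ has all of its $q$-neighbors confined to a small \emph{window} around $q_{b_i}$ (else one extracts a drum or extended drum directly), then contracts each window to a single $q$-vertex by Lemma~\ref{lem:fancontract}, arriving at a \emph{simple} extended hanging ladder, and only \emph{then} collapses $S$ inside Lemma~\ref{lem:extendedhangingladder1}. You instead collapse $S$ first via Lemma~\ref{lem:reduceconnected} and try to discipline the $w_i$-neighborhoods afterwards with Corollary~\ref{cor:regularpartition}.

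There is a real gap, and it is precisely what the paper's windowing argument is designed to avoid. The regular partition lemma tells you that, restricted to each part, the $w_i$-neighborhoods are either identical or \emph{monotone in one of two directions}. The $v_i$-neighborhoods on the $q$-path are by definition increasing (the sequence $b_1<b_2<\cdots$), and the $w_i$'s are rigidly matched to the $v_i$'s by the $p$-path ($v_i\sim p_{2i-1}$, $w_i\sim p_{2i}$). If the part that carries most of the $b_i$'s gives a \emph{decreasing} $w_i$-pattern, then the $v_i$- and $w_i$-attachments cross rather than alternate, and the resulting structure is not a subdivision of a hanging ladder; your Case B conclusion does not follow, and reversing the $q$-path or renumbering the $w_i$'s does not remove the crossing because it is anchored by the fixed $p$-path pairing. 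The paper's Claims~\ref{claim:consecutivenbr1} and~\ref{claim:consecutivenbr2} preclude this situation: any $w_i$ whose $q$-neighbors stray far from $b_i$ immediately yields a drum, so after those claims the decreasing case simply cannot arise; the regular partition lemma gives no such guarantee. Two further points are underexamined: (i) the $w_i$'s still have chords into the \emph{other} parts of the regular partition, and eliminating them requires the windowed contraction of Lemma~\ref{lem:fancontract} keyed to the $b_i$'s (which you do not have without Claim~\ref{claim:consecutivenbr2}); and (ii) in your identical case you claim the $p$-path, $v_i$'s, $w_i$'s, and $v$ already form a (sub)drum, but a drum needs a \emph{cycle}, which here must be closed through the $q$-path; one then has to delete the shared $q$-neighbors, argue by pigeonhole that a long $q$-segment with many $b_i$'s survives, and re-contract the $v_i$-intervals --- this is exactly the bookkeeping your ``$v_i$'s sit in $[b_i,b_{i+1}-1]$'' remark waves at but does not supply. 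To repair the approach you would essentially need to re-derive the paper's windowing claims before or alongside the regular partition step, at which point you recover the paper's argument.
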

\begin{proof}
Let 
\begin{itemize}
\item $m_1:=\mu(n)$, 
\item $m_2:=8n$, 
\item $m_3:=\mu(2n+2)$, 
\item $m_4:=m_1+2(m_2-1)(n-2)+4$, and 
\item $L:=(m_3-1)m_4+\frac{m_4+m_1}{2}$.
\end{itemize}
Let $G$ be an $n$-extended hanging ladder of order $L$. 
We claim that $G$ contains $W_n$ as a vertex-minor.

We first prove two special cases.

\begin{claim}\label{claim:nonnbrs}
Suppose there exists $i\in \{0, 1, \ldots, L-m_1\}$ such that
there are no edges between $\{w_{i+1}, w_{i+2}, \ldots, w_{i+m_1}\}$ and $\{q_{b_{i+1}}, q_{b_{i+1}+1}, q_{b_{i+1}+2}, \ldots, q_{b_{i+m_1+1}}\}$. 
Then $G$ contains a vertex-minor isomorphic to $W_n$.
\end{claim}
\begin{clproof}
Suppose there exists such an integer $i$. Let $i'$ be the maximum integer such that $v_{i+1}$ is adjacent to $q_{i'}$.
Note that $b_{i+1}\le i'<b_{i+2}$.
Then  
\[q_{i'}q_{i'+1} \cdots q_{b_{i+m_1+1}}v_{i+m_1+1}p_{2(i+m_1+1)-1}p_{2(i+m_1+1)-2} \cdots p_{2i+1}v_{i+1}q_{i'}\]
is an induced cycle.
Since there are no edges between $\{w_{i+j}:1\le j\le m_1\}$ and $\{q_{x}:b_{i+1}\le x\le b_{i+m_1+1}\}$, 
$G$ contains a subdivision of an extended  drum of order $m_1=\mu(n)$.
By Lemma~\ref{lem:extendeddrum}, $G$ contains a vertex-minor isomorphic to $W_n$.
\end{clproof}

\begin{claim}\label{claim:consecutivenbr1}  If there are $i, j_1, j_2\in \{1, \ldots, L\}$ with $j_2-j_1 \ge m_2$ such that
\begin{itemize}
\item $w_i$ is adjacent to $q_{x_1}$ for some $b_{j_1}\le x_1<b_{j_1+1}$ and adjacent to $q_{x_2}$ for some $b_{j_2}\le x_2<b_{j_2+1}$, 
\item $w_i$ is not adjacent to $q_x$ for all $x\in \{x_1+1, x_1+2, \ldots, x_2-1\}$,
\end{itemize}
then $G$ contains a vertex-minor isomorphic to $W_n$. 
\end{claim}

\begin{figure}
  \centering
  \begin{tikzpicture}[scale=0.55]
  \tikzstyle{w}=[circle,draw,fill=black!50,inner sep=0pt,minimum width=4pt]

   \draw (0,0)--(13,0);

 \foreach \z in {1,...,3}{
   \draw (\z*4-1,3)--(\z*4-1,4.5);
   \draw (\z*4-1+1,0)--(\z*4-1,3)--(\z*4-1-1,0);
      \draw (\z*4-1,3) node [w] (z\z) {};
     }
     \draw(3, 4.5)--(11, 4.5);
     
     \node at (4-1+1, 3) {$v_{j_1+2}$};
     \node at (8-1+1, 3) {$v_{j_1+4}$};
     \node at (12-1+1, 3) {$v_{j_1+6}$};

  	\node at (3, 5) {$p_{2j_1+3}$};
     \node at (11, 5) {$p_{2j_1+11}$};

        \node at (8-1+1, -3) {$w_i$};
           \draw (8-1,-3) node [w] (x) {};

               \node at (2, -.5) {$q_{b_{j_1+2}}$};
               \node at (6, -.5) {$q_{b_{j_1+4}}$};
               \node at (10, -.5) {$q_{b_{j_1+6}}$};
     
   \draw (1,0) node [w] (a1) {};
   \draw (13,0) node [w] (a13) {};
 \foreach \y in {0,...,12}{
      \draw (\y,0) node [w] (a\y) {};
     }
 \foreach \y in {3,...,11}{
      \draw (\y,4.5) node [w] (b\y) {};
     }
	\draw(0,0)--(x)--(a13);

	\draw[thick, blue, dashed](0.8, 0.5)--(0.8, 1)--(4.2,1)--(4.2,0.5);
	\draw[thick, blue, dashed](4.8, 0.5)--(4.8, 1)--(8.2,1)--(8.2,0.5);
	\draw[thick, blue, dashed](8.8, 0.5)--(8.8, 1)--(12.2,1)--(12.2,0.5);

   \end{tikzpicture}\qquad
     \begin{tikzpicture}[scale=0.55]
  \tikzstyle{w}=[circle,draw,fill=black!50,inner sep=0pt,minimum width=4pt]

   \draw (0,0)--(13,0);

 \foreach \z in {1,...,3}{
   \draw (\z*4-1,3)--(\z*4-1,4.5);
   \draw (\z*4-1,0)--(\z*4-1,3);
      \draw (\z*4-1,3) node [w] (z\z) {};
     }
     \draw(3, 4.5)--(11, 4.5);
     
     \node at (4-1+1, 3) {$v_{j_1+2}$};
     \node at (8-1+1, 3) {$v_{j_1+4}$};
     \node at (12-1+1, 3) {$v_{j_1+6}$};

        \node at (8-1+1, -3) {$w_i$};
           \draw (8-1,-3) node [w] (x) {};
   
          	\node at (3, 5) {$p_{2j_1+3}$};
     \node at (11, 5) {$p_{2j_1+11}$};

  \foreach \y in {0,3,7,11, 13}{
      \draw (\y,0) node [w] (a\y) {};
     }
 \foreach \y in {3,...,11}{
      \draw (\y,4.5) node [w] (b\y) {};
     }
	\draw(0,0)--(x)--(a13);

   \end{tikzpicture}

     \caption{An example of contractions in Claim~\ref{claim:consecutivenbr1} of Lemma~\ref{lem:extendedhangingladder2}. We will contract dashed parts in the first figure to obtain the second figure.}\label{fig:examplecontractionlem}
\end{figure}
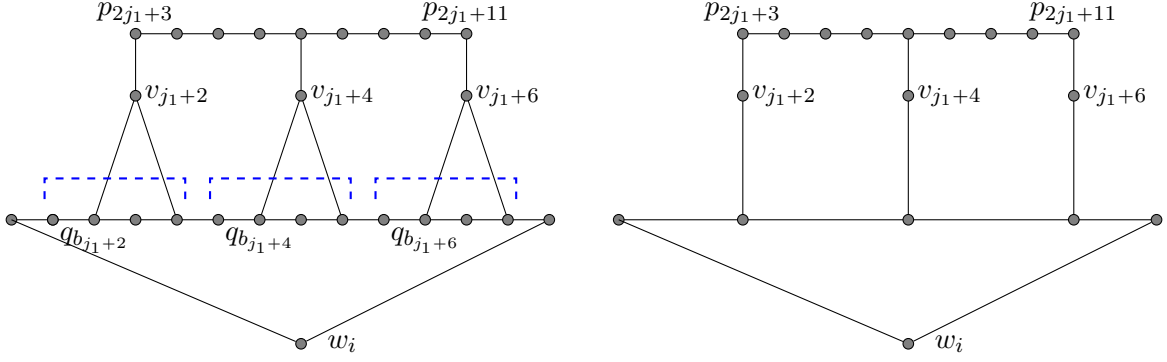

\begin{clproof}
Suppose there are such integers $i, j_1, j_2$. Then 
$ w_iq_{x_1}q_{x_1+1}q_{x_1+2} \cdots q_{x_2}w_i$
is an induced cycle. 

First assume that $i\le j_1$.
Let 
\[G_1:=G[\{w_i, q_{x_1}, q_{x_1+1},  \ldots,  q_{x_2}\}\cup \{v_{j_1+2}, v_{j_1+4}, v_{j_1+6}, \ldots, v_{j_2'}\} \cup \{p_{2j_1+3}, p_{2j_1+4},  \ldots, p_{2j_2'-1}\}],\]
where $j_2'=j_2-2$ if $j_2\equiv j_1 \pmod{2}$ and $j_2'=j_2-1$ otherwise.
We will contract paths from $G$ to obtain a drum on $\frac{3}{2}(m_2-2)\ge 3(2n-1)$ vertices.
See Figure~\ref{fig:examplecontractionlem} for an example case.
Observe that $p_{2j_1+3}p_{2j_1+4} \cdots p_{2j_2'-1}$ is a path such that each vertex of $v_{j_1+2}, v_{j_1+4}, v_{j_1+6}, \ldots, v_{j_2'}$ has a neighbor on this path.

Let $G_2$ be the graph obtained from $G_1$ by contracting 
\[\{q_{b_{j_1+(t-1)}}, q_{b_{j_1+(t-1)}+1}, \ldots,  q_{b_{j_1+(t+1)}-1}\}\] for each 
$t\in \{2, 4, 6, \ldots, j_2'-j_1\}$. 
By Lemma~\ref{lem:fancontract}, $G_2$ is isomorphic to a vertex-minor of $G_1$.
Moreover, $G_2$ contains a subdivision of a drum on $\frac{3}{2}(j_2-j_1-2)=\frac{3}{2}(m_2-2)\ge 3(2n-1)$ vertices, and 
by Lemma~\ref{lem:drum}, $G_2$ contains a vertex-minor isomorphic to $W_n$.
The case when $i\ge j_2$ is symmetric to the previous case. We may assume that $j_1<i<j_2$.
In this case, $w_i$ is adjacent to $p_{2i}$, and to avoid having this edge, we take a part that is larger and having no edges from $w_i$.

Since $j_2-j_1\ge m_2$, we have either $i-j_1\ge m_2/2$ or $j_2-i\ge m_2/2$.
So, by taking the longer path between two subpaths obtained from $p_{2j_1+3}p_{2j_1+4}\cdots p_{2j_2'-1}$ by removing $p_{2i}$, we can observe that $G$ contains a vertex-minor isomorphic to a drum on $\frac{3}{2}(\frac{m_2}{2}-2)$ vertices. 
Since $\frac{3}{2}(\frac{m_2}{2}-2)=3(2n-1)$, by Lemma~\ref{lem:drum}, $G$ contains a vertex-minor isomorphic to $W_n$.
\end{clproof}

From Claim~\ref{claim:consecutivenbr1}, 
we observe the following.

\begin{claim}\label{claim:consecutivenbr2}  If there are $i, j_1, j_2\in \{1, \ldots, L\}$ with $j_2-j_1 \ge (m_2-1)(n-2)+1$ such that
$w_i$ is adjacent to $q_{x_1}$ for some $b_{j_1}\le x_1<b_{j_1+1}$ and adjacent to $q_{x_2}$ for some $b_{j_2}\le x_2<b_{j_2+1}$, 
then $G$ contains a vertex-minor isomorphic to $W_n$. 
\end{claim}
\begin{clproof}
Since $j_2-j_1\ge (m_2-1)(n-2)+1$ and $w_i$ has at most $n-1$ neighbors in $\{q_1, q_2, \ldots, q_r\}$, there exist $j_3, j_4$ with $j_1<j_3<j_4<j_2$ such that 
\begin{itemize}
\item $j_4-j_3\ge m_2$, 
\item $w_i$ is adjacent to $q_{x_3}$ for some $b_{j_3}\le x_3<b_{j_3+1}$ and adjacent to $q_{x_4}$ for some $b_{j_4}\le x_4<b_{j_4+1}$, 
\item $w_i$ is not adjacent to $q_x$ for all $x_3<x<x_4$.
\end{itemize}
By Claim~\ref{claim:consecutivenbr1}, $G$ contains a vertex-minor isomorphic to $W_n$.
\end{clproof}

For each $i\in \{1, 2, \ldots, m_3\}$, we choose a vertex $w_{d_i}$ such that
\begin{itemize}
\item $d_i\in \{ im_4+1, im_4+2, \ldots, im_4+m_1\}$ and
\item $w_{d_i}$ is adjacent to $q_x$ for some $b_{im_4+1}\le x\le b_{im_4+m_1+1}$. 
\end{itemize}
If such a vertex does not exist for some $i$, then by Claim~\ref{claim:nonnbrs}, $G$ contains a vertex-minor isomorphic to $W_n$. 
We may assume that such a vertex exists for each $i\in \{1, 2, \ldots, m_3\}$.

Suppose $w_{d_i}$ is adjacent to $q_y$ for some $y\ge b_{im_4+\frac{m_4+m_1}{2}}$ or for some $y\le b_{im_4-\frac{m_4-m_1}{2}+3}-1$.
By the choice of $d_i$, $w_{d_i}$ is adjacent to $q_x$ for some $b_{im_4+1}\le x\le b_{im_4+m_1+1}$.
Since $\frac{m_4+m_1}{2}-(m_1+1)\ge (m_1-1)(n-2)+1$ and $1- (-\frac{m_4-m_1}{2}+2)\ge (m_1-1)(n-2)+1$, 
by Claim~\ref{claim:consecutivenbr2}, $G$ contains a vertex-minor isomorphic to $W_n$. 
We may assume that each $w_{d_i}$ is not adjacent to $q_y$ for all $y\in \{1,2, \ldots, r\}\setminus \{j: b_{im_4-\frac{m_4-m_1}{2}+3}\le j< b_{im_4+\frac{m_4+m_1}{2}}  \}$.
Note that $(i+1)m_4-\frac{m_4-m_1}{2}=im_4+\frac{m_4+m_1}{2}$.

\medskip 
Let $G_1$ be the subgraph of $G$ induced on 
\[ \{w_{d_i}: 1\le i\le m_3\} \cup \{v_{im_4-\frac{m_4-m_1}{2}+1}: 1\le i\le m_3 \} \cup \{p_1, \ldots, p_{2L}, q_1, \ldots, q_r\}\cup S. \] 
Let $G_2$ be the graph obtained from $G_1$ by contracting 
\[ \{q_{b_{im_4-\frac{m_4-m_1}{2}+2}}, q_{b_{im_4-\frac{m_4-m_1}{2}+2}+1}, \ldots, q_{b_{im_4+\frac{m_4+m_1}{2}}-1}\} \]
for each $i\in \{1, 2, \ldots, m_3\}$ and contracting
\[ \{q_{b_{im_4-\frac{m_4-m_1}{2}}}, q_{b_{im_4-\frac{m_4-m_1}{2}}+1}, \ldots, q_{b_{im_4-\frac{m_4-m_1}{2}+2}-1}\}  \]
for each $i\in \{1, 2, \ldots, m_3\}$.
By Lemma~\ref{lem:fancontract}, $G_2$ is isomorphic to a vertex-minor of $G$.
Also, $G_2$ contains a subdivision of a simple extended hanging ladder of order $m_3=\mu(2n+2)$.
By Lemma~\ref{lem:extendedhangingladder1}, $G_2$ contains a vertex-minor isomorphic to $W_{4n}$.
\end{proof}

\section{$(w, \ell)$-patched cycles}\label{sec:patchedpath}
Let $w, \ell$ be positive integers. A \emph{$(w, \ell)$-patched cycle} $(q_1q_2 \cdots q_mq_1, S_1, S_2, \ldots, S_w)$ is a graph $G$ on pairwise disjoint sets 
$\{q_1, q_2, \ldots, q_m\}$ and $S_i=\{s^i_1, s^i_2, \ldots, s^i_\ell\}$ for each $i\in \{1, \ldots, w\}$ 
satisfying the following.
\begin{enumerate}[(1)]
\item $q_1q_2 \cdots q_mq_1$ is an induced cycle.
\item There exists 
a sequence $1\le b_1< b_2<\cdots< b_{\ell}\le m$ such that 
for each $i\in \{1, \ldots, w\}$ and $j\in \{1,2,\ldots, \ell\}$, 
$s^i_j$ is adjacent to $q_{b_j}$
and non-adjacent to $q_{b_x}$ for all $x\in \{1, \ldots, \ell\}\setminus \{1, \ldots, j\}$.
\end{enumerate}
We call $w$ and $\ell$ the \emph{width} and \emph{length} respectively, of a $(w, \ell)$-patched cycle.
Note that $m\ge \ell$.
A $(w, \ell)$-patched cycle $(q_1q_2 \cdots q_mq_1, S_1, S_2, \ldots, S_w)$ is \emph{simple} if $S_1\cup S_2\cup \cdots \cup S_w$ is an independent set.

In Subsection~\ref{subsec:fromtwoellpatchedpath}, we show that 
for every $n$, there exists $M$ such that every graph $G$ obtained from a simple $(2, M)$-patched cycle $(q_1q_2 \cdots q_mq_1, S_1, S_2)$ by 
adding two disjoint vertex sets $T_1$ and $T_2$ such that 
\begin{itemize}
\item there are no edges between $\{q_1, q_2, \ldots, q_m\}$ and $T_1\cup T_2$, 
\item there are no edges between $S_1$ and $T_2$,
\item for each $i\in \{1,2\}$, every vertex in $S_i$ has a neighbor in $T_i$, 
\item for each $i\in \{1,2\}$, 
there exists a vertex $r_i\in T_i$ where 
for every $v\in N_G(S_i)\cap T_i$, there is a path $P$ from $v$ to $r_i$ in $G[T_i]$ with $N_G(S_i)\cap V(P)=\{v\}$,
\end{itemize}
contains a vertex-minor isomorphic to $W_n$. This is the motivation for introducing $(w, \ell)$-patched cycles.
In Subsection~\ref{subsec:obtainsimplepatchedpath}, 
we show that for every $n$, a simple $(2, n)$-patched cycle can be obtained from  a patched cycle with sufficiently large width and large length using the rectangular Ramsey lemma developed in Section~\ref{sec:ramsey}.
In Subsection~\ref{subsec:obtainhugepatchedpath}, 
we discuss how to obtain a huge patched cycle from a structure that can be naturally extracted from a graph with bounded clique number and sufficiently large chromatic number.

\subsection{From a simple $(2, M)$-patched cycle with attached connected subgraphs}\label{subsec:fromtwoellpatchedpath}

\begin{proposition}\label{prop:finalconfig2}
For every integer $n\ge3$, there exists an integer $M=M(n)$
satisfying the following property:
If $G$ is the graph obtained from a simple $(2, M)$-patched cycle $(q_1q_2 \cdots q_mq_1, S_1, S_2)$ by adding disjoint vertex sets $T_1$ and $T_2$ such that 
\begin{itemize}
\item there are no edges between $\{q_1, q_2,  \ldots,  q_m\}$ and $T_1\cup T_2$,
\item there are no edges between $S_1$ and $T_2$,
\item for each $i\in \{1,2\}$, every vertex in $S_i$ has a neighbor in $T_i$,
\item for each $i\in \{1,2\}$, 
there exists a vertex $r_i\in T_i$ where 
for every $v\in N_G(S_i)\cap T_i$, there is a path $P$ from $v$ to $r_i$ in $G[T_i]$ with $N_G(S_i)\cap V(P)=\{v\}$,
\end{itemize}
then $G$ contains a vertex-minor isomorphic to $W_n$.
\end{proposition}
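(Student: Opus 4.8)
The plan is to choose $M=M(n)$ large enough that, after a controlled sequence of vertex-minor operations (passing to induced subgraphs, smoothing degree-$2$ vertices, contracting fan-paths via Lemma~\ref{lem:fancontract}, and local complementations and pivots), $G$ is exhibited to contain, as a vertex-minor, one of the three gadgets whose $W_n$-containment has already been established: an extended drum of order $\mu(n)$, an extended clam of order $\mu(n)+\mu(2n+1)-1$, or an $n$-extended hanging ladder of order $L(n)$. Then Lemma~\ref{lem:extendeddrum}, Lemma~\ref{lem:extendedclam}, or Lemma~\ref{lem:extendedhangingladder2} finishes the proof; note these three lemmas already absorb, through Lemma~\ref{lem:reduceconnected}, the connectivity and path structure that $T_1$ and $T_2$ carry, so we never need to collapse $T_1$ or $T_2$ by hand.

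First take $M\ge n+3$, so that, since $m\ge \ell=M$, the induced cycle $C=q_1q_2\cdots q_m$ has length at least $n+3$. We may then assume every $s^i_j$ has at most $n-1$ neighbours on $C$: if some $s^i_j$ has at least $n$ of them, then $G[V(C)\cup\{s^i_j\}]$ is an induced cycle of length at least $n+3$ together with a vertex of degree at least $n$, so Lemma~\ref{lem:fromlargerwheel} already yields a $W_n$ vertex-minor. Next, delete one vertex of $C$ to view it as an induced path and identify its vertices with reals in an interval $I$; for $i\in\{1,2\}$ and each $j$ let $A^i_j\subseteq I$ record the positions of $N_G(s^i_j)\cap V(C)$, a set of at most $n-1$ reals. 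Apply Corollary~\ref{cor:regularpartition} to $(A^1_1,\ldots,A^1_M)$, then apply it again to the subsequence of the $A^2_j$'s picked out by the first application, and take the common refinement of the two regular partitions obtained. This produces an index set $J\subseteq\{1,\ldots,M\}$, of size as large as we please provided $M$ is large, and a partition of $I$ into at most $2n-3$ intervals such that, for each interval $I_t$ and each of the two families $\{s^1_j:j\in J\}$ and $\{s^2_j:j\in J\}$, either all members have one and the same (possibly empty) neighbourhood within $I_t$, or their neighbourhoods within $I_t$ have a common positive size and occur in strictly monotone order as $j$ increases.

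Now we run the case analysis dictated by this pattern. If, for one of the two families --- say $\{s^1_j:j\in J\}$ --- there is a large $J'\subseteq J$ on which every $s^1_j$ has exactly one neighbour on $C$, then, restricting to the induced subgraph on $V(C)\cup\{s^1_j:j\in J'\}\cup T_1$ and smoothing every vertex of $C$ that is not a neighbour of some such $s^1_j$, we are left with an extended drum of order $|J'|$, and Lemma~\ref{lem:extendeddrum} applies once $|J'|\ge\mu(n)$ (and symmetrically with $S_2,T_2$). Otherwise both families, on some relevant interval, have neighbourhoods of size at least two; working in the induced subgraph on $V(C)\cup S_1\cup S_2\cup T_1\cup T_2$, cutting the cycle open at a carefully chosen vertex, smoothing the unused cycle vertices, and contracting the fan-paths between consecutive attachment points via Lemma~\ref{lem:fancontract}, one checks that --- distinguishing according to whether the two families' neighbourhoods interleave along $C$ or are separated --- the result contains a subdivision of an extended clam or of an $n$-extended hanging ladder of arbitrarily large order, whence Lemma~\ref{lem:extendedclam} or Lemma~\ref{lem:extendedhangingladder2} gives $W_n$. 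Propagating the largest bound required through all of these reductions pins down the value of $M(n)$.

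The step I expect to be the main obstacle is this last case analysis: a priori the regular partition leaves several qualitatively different adjacency patterns, and in each branch one must verify that the cycle can be cut open and the superfluous cycle vertices and fan-paths contracted so that precisely the extended drum, clam, or hanging ladder template emerges --- in particular, when both families have several neighbours on $C$, that the ``spoke'' vertices coming from $S_1$ and those coming from $S_2$ can be arranged into the alternating $v_i$/$w_i$ pattern required by an extended clam or an extended hanging ladder, and that the chords that would otherwise be created survive only as the rungs that the pivots inside Lemmas~\ref{lem:extendedclam} and~\ref{lem:extendedhangingladder2} are built to delete.
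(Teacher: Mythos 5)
Your outline has all the right ingredients---two successive applications of Corollary~\ref{cor:regularpartition}, a reduction to the three gadgets, and the lemmas \ref{lem:extendeddrum}, \ref{lem:extendedclam}, \ref{lem:extendedhangingladder2}---and this is exactly the skeleton of the paper's proof. But the case analysis, which you yourself flag as ``the main obstacle,'' is not just hard to verify: the dichotomy you propose for organizing it is not the one that the regular partition lemma actually produces, and this is where the proof would break.

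You split on whether a large subfamily of $\{s^1_j\}$ has exactly one neighbour on $C$ (drum) versus ``both families have neighbourhoods of size at least two on some interval'' (clam/ladder). This is not exhaustive in the useful sense, and ``size at least two'' is not what drives the clam and ladder constructions. What the two applications of Corollary~\ref{cor:regularpartition} actually yield is a small interval $I_2$ containing the attachment points $b_{u_j}$, inside which both $N^1_{u_j}$ and $N^2_{u_j}$ are localised and slide monotonically with $j$ (so an $s^i_{u_j}$ may well have $n-1$ neighbours near $q_{b_{u_j}}$ and still land in the ``drum'' branch, because the in-between cycle segments get contracted by Lemma~\ref{lem:fancontract}). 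The genuine branching is on whether the first regular partition $\mathcal{I}_1$ consists of a single part or of several. If a single part, everything is localised and a drum emerges. If there is a second part $J$, then every $s^1_{u_j}$ has at least one neighbour far away in $J$, and the three subcases of the regular partition on $J$ (identical / increasing / decreasing) are what give the extended clam and the extended hanging ladder. Crucially, that far part $J$ is where the clam's apex $h=q_w$ and the ladder's second path $q_1q_2\cdots q_r$ come from---they are cycle vertices, not anything in $T_1$ or $T_2$---and the construction uses only $T_2$ as the extension $S$ of the clam/ladder, with the $S_1$-spokes playing the role of the $v_i$'s precisely so that the hypothesis ``no edges between $S_1$ and $T_2$'' guarantees the clam/ladder axiom that the $v_i$'s (and $h$, resp.\ the path $q_1\cdots q_r$) are non-adjacent to $S$. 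Your write-up never identifies where $h$ or the second path comes from, and never exploits the asymmetric hypothesis on $S_1$ and $T_2$, so as stated the reduction to those gadgets cannot be checked, and I do not see how to complete it along the lines you describe without first recasting the case split around $\mathcal{I}_1$'s parts.
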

\begin{proof}
We recall that $\mu(n)=(n-1)(R(n,n)^{2n-3}+1)$ for $n\ge 3$, $N'$ is the function defined in Corollary~\ref{cor:regularpartition}, and $L$ is the function defined in Lemma~\ref{lem:extendedhangingladder2}.
We note that $L(n)\ge \mu(n)+\mu(2n+1)-1$.
Let 
\begin{itemize}
\item $M_1:=(n-1)(4L(n)+6)$,
\item $M_2:=(n-1)N'(n-1, M_1)$, 
\item $M:=N'(n-1, M_2)$.
\end{itemize}
For each $i\in \{1, 2\}$, let $S_i:=\{s^i_1, \ldots, s^i_M\}$, and
let $b_1, \ldots, b_{M}$ be a sequence such that 
\begin{itemize}
\item $1\le b_1< b_2<\cdots< b_{M}\le m$ and
\item for each $i\in \{1, 2\}$ and $j\in \{1,2,\ldots, M\}$, 
$s^i_j$ is adjacent to $q_{b_j}$
and non-adjacent to $q_{b_x}$ for all $x\in \{1, \ldots, M\}\setminus \{1, \ldots, j\}$.
\end{itemize}
Such a sequence exists by the definition of a $(2, M)$-patched cycle.
Let $Q:=\{q_1, q_2, \ldots, q_m\}$, and 
for each $i\in \{1,2\}$ and $j\in \{1, \ldots, M\}$, let $N^i_j=\{k:
q_k\in N_G(s^i_j), 1\le k\le M\}$.

Note that $m\ge M\ge n+3$. If there is a vertex in $S_1\cup S_2$ having at least $n$ neighbors on
$Q$, then $G$ contains a vertex-minor isomorphic to $W_n$ by Lemma~\ref{lem:fromlargerwheel}.
We may assume that each vertex in $S_1\cup S_2$ has at most $n-1$ neighbors in $Q$.
In other words, for each $i\in \{1,2\}$ and $j\in \{1, \ldots, M\}$, $\abs{N^i_j}\le n-1$.

We apply Corollary~\ref{cor:regularpartition} to $(N^1_1, \ldots, N^1_{M})$.
Then 
there exist a sequence $1\le c_1<c_2<\cdots<c_{M_2} \le M$ and a
regular partition $\mathcal{I}_1$ of $\mathbb R$ with respect to 
$(N^1_{c_1}, N^1_{c_2}, \ldots, N^1_{c_{M_2}})$
such that $\mathcal{I}_1$ has order at most $n-1$.
Since $\mathcal{I}_1$ has order at most $n-1$, there exists a part $I_1$ of
$\mathcal{I}_1$ that contains at least $\frac{M_2}{n-1}$ integers in
$\{b_{c_1}, b_{c_2}, \ldots,b_{c_{M_2}}\}$.
Let $x$ be the minimum such that $x\ge 1$ and $b_{c_x}\in I_1$,
and let $y$ be the maximum such that $y\le M_2$ and $b_{c_y}\in I_1$.
Then $y-x+1\ge \frac{M_2}{n-1}=N'(n-1, M_1)$.

We apply Corollary~\ref{cor:regularpartition} again to $(N^2_1\cap I_1, \ldots, N^2_{M}\cap I_1)$.
Then 
there exist a subsequence $d_1<d_2<\cdots<d_{M_1} $ of $c_x,c_{x+1},\ldots,c_{y}$ and a
regular partition $\mathcal{I}_2$ of $I_1$ with respect to 
$(N^2_{d_1}\cap I_1, N^2_{d_2}\cap I_1, \ldots, N^2_{d_{M_1}}\cap I_1)$
such that $\mathcal{I}_2$ has order at most $n-1$. 
There exists
a part $I_2$ of $\mathcal{I}_2$ 
that contains at least $\frac{M_1}{n-1}$ integers in 
$\{b_{d_1},b_{d_2},b_{d_3},\ldots,b_{d_{M_1}}\}$.
Let $x'$ be the minimum such that $x'\ge 1$ and $b_{d_{x'}}\in I_2$
and 
let $y'$ be the maximum such that $y'\le M_1$ and $b_{d_{y'}}\in I_2$.
Let $a=y'-x'+1$. 
Then $a=y'-x'+1\ge \frac{M_1}{n-1}=4L(n)+6$.
Let $u_1=d_{x'}$, $u_2=d_{x'+1}$, $\ldots$, $u_{a}=d_{y'}$.

By the definition of a $(2, M)$-patched cycle,  for each $i\in \{1,2\}$
and $1\le j\le a$, 
$s^i_{u_j}$ is adjacent to $q_{b_{u_j}}$ but non-adjacent to vertices in $\{q_{b_{u_{j+1}}},q_{b_{u_{j+2}}},\ldots,q_{b_{u_{a}}}\}$.
Therefore, $N^i_{u_j}\cap I_2$ is not identical for $1\le j\le a$ and moreover, minimal intervals containing $N^i_{u_1}\cap
I_2, N^i_{u_2}\cap I_2, \ldots, N^i_{u_a}\cap I_2$ appear in the same order as $u_1, u_2, \ldots, u_a$. 
In other words, for each $i\in \{1,2\}$,
\begin{itemize}
\item $N^i_{u_1}\subseteq (-\infty,b_{u_{2}})\cap I_2$, 
\item $N^i_{u_a}\subseteq (b_{u_{a-1}},\infty)\cap I_2$, 
\item for $j\in \{2,3,\ldots,a-1\}$, 
   $N^i_{u_j}\subseteq (b_{u_{j-1}},b_{u_{j+1}})$.
\end{itemize}
Let $t=4L(n)+4$.

We first deal with the case when $\mathcal{I}_1$ consists of one part.

\begin{claim}\label{claim:onepart}
  If $\mathcal{I}_1$ consists of one part, then $G$ contains a vertex-minor isomorphic to $W_n$.
\end{claim}
\begin{clproof}
Let $G_1$ be the subgraph of $G$ induced on $Q \cup \{s^1_{u_2},s^1_{u_4},s^1_{u_6},\ldots,s^1_{u_t}\}\cup T_1 $.
We obtain a graph $G_2$ from $G_1$ by contracting $\{q_x: b_{u_{j-1}}\le x<b_{u_{j+1}}\}$ for each $j\in\{2,4,\ldots,t\}$ to a vertex. 
By Lemma~\ref{lem:fancontract}, $G_2$ is isomorphic to a vertex-minor of $G_1$. 
Then $G_2$  is a
subdivision of an extended drum of order $t/2\ge L(n)\ge \mu(n)$.
By Lemma~\ref{lem:extendeddrum}, 
$G$ contains a vertex-minor isomorphic to $W_n$.
\end{clproof}

By Claim~\ref{claim:onepart}, we may assume that $\mathcal{I}_1$ consists of at least two parts.
Let $J$ be a part of $\mathcal{I}_1$ other than $I_1$. Clearly, $J$ is disjoint from $I_2$.
By the definition of a regular partition,  either 
\begin{enumerate}[(1)]
\item $N^1_{u_2}\cap J=N^1_{u_4}\cap J=\cdots =N^1_{u_{t}}\cap J\neq \emptyset$, or
\item $\abs{N^1_{u_2}\cap J}=\abs{N^1_{u_4}\cap J}=\cdots =\abs{N^1_{u_{t}}\cap J}>0$ and for all $i,j\in \{2,4, \ldots, t \}$ with $i<j$, $\max(N^1_{u_i}\cap J)<\min (N^1_{u_j}\cap J)$, or
\item $\abs{N^1_{u_2}\cap J}=\abs{N^1_{u_4}\cap J}=\cdots =\abs{N^1_{u_{t}}\cap J}>0$ and for all $i,j\in \{2,4, \ldots, t \}$ with $i<j$, $\max (N^1_{u_j}\cap J)< \min (N^1_{u_i}\cap J)$. 
\end{enumerate}
When (1) appears, we will find an extended clam of large order, and when (2) or (3) appears, we will find an extended hanging ladder of large order.

\medskip
\noindent\textbf{Case 1.} $N^1_{u_2}\cap J=N^1_{u_4}\cap J=\cdots =N^1_{u_{t}}\cap J\neq \emptyset$.

Let $w\in N^1_{u_2}\cap J$. 
Let $Q_1$ be the connected component of $G[Q]-q_{b_{u_1}}-q_{b_{u_a}}$ containing
$q_{b_{u_2}}$.
We observe that $q_w\notin V(Q_1)$ and
$q_{b_{u_4}}, q_{b_{u_6}}, \ldots, q_{b_{u_{t-2}}}\in V(Q_1)$.
Let $G_1$ be the subgraph of $G$ induced on
\begin{align*}
\{q_w\}&\cup  V(Q_1)\cup T_2 \\
&\cup \{s^1_{u_j}: 4\le j\le t-2, j\equiv 0 \pmod 4 \} \cup
  \{s^2_{u_j}: 4\le j\le t-2, j\equiv 2 \pmod 4\}.  
\end{align*}
We obtain a graph $G_2$ from $G_1$ by contracting $\{q_x:
b_{u_{j-1}}\le x<b_{u_{j+1}}\}$ for each  $j\in\{4,6,8,\ldots,t-2\}$ to a vertex. 
By Lemma~\ref{lem:fancontract}, $G_2$ is isomorphic to a vertex-minor of $G_1$.
Note that there are no edges between $T_2$ and $S_1$.
Thus, $G_2$ contains a subdivision of an extended clam of order $t/4 -1=L(n)\ge \mu(n)+\mu(2n+1)-1$, and by Lemma~\ref{lem:extendedclam}, $G$ contains a vertex-minor isomorphic to $W_n$.

\medskip
\noindent\textbf{Case 2.} $\abs{N^1_{u_2}\cap J}=\abs{N^1_{u_4}\cap J}=\cdots =\abs{N^1_{u_{t}}\cap J}>0$ and for all $i,j\in \{2,4, \ldots, t \}$ with $i<j$, $\max(N^1_{u_i}\cap J)<\min (N^1_{u_j}\cap J)$.

Let $Q_1$ be the connected component of $G[Q]-q_{b_{u_1}}-q_{b_{u_a}}$ containing
$q_{b_{u_2}}$.
Let $Q_2$ be the path on $\{q_i:i\in J\}$.
Note that there are no edges between $Q_1$ and $Q_2$.
Let $G_1$ be the subgraph of $G$ induced on
\begin{align*}
V(Q_1)&\cup  V(Q_2)\cup T_2 \\
&\cup \{s^1_{u_j}: 4\le j\le t-2, j\equiv 0 \pmod 4 \} \cup
  \{s^2_{u_j}: 4\le j\le t-2, j\equiv 2 \pmod 4\}.  
\end{align*}
We obtain a graph $G_2$ from $G_1$ by contracting $\{q_x:
b_{u_{j-1}}\le x<b_{u_{j+1}}\}$ for each  $j\in\{4,6,8,\ldots,t-2\}$. 
By Lemma~\ref{lem:fancontract}, $G_2$ is isomorphic to a vertex-minor of $G_1$.
Note that there are no edges between $T_2$ and $S_1$.
Thus, $G_2$ contains a subdivision of an $n$-extended hanging ladder of order $t/4-1\ge L(n)$, and by Lemma~\ref{lem:extendedhangingladder2}, $G$ contains a vertex-minor isomorphic to $W_n$.

\medskip
\noindent\textbf{Case 3.} For all $i,j\in \{2,4, \ldots, t \}$ with $i<j$, $\abs{N^1_{u_i}\cap J}=\abs{N^1_{u_j}\cap J}>0$ and  $\max (N^1_{u_j}\cap J)< \min (N^1_{u_i}\cap J)$.

This case is symmetric to \textbf{Case 2}.

\medskip
This completes the proof of the proposition.
 \end{proof}

\subsection{Obtaining a simple patched cycle}\label{subsec:obtainsimplepatchedpath}

\begin{proposition}\label{prop:intermediatesimple}
Let $R_1$, $R_2$ be the functions defined in Proposition~\ref{prop:recramsey}.
For all positive integers $a$, $b$, and $k$, 
if  $M=R_1(a,b,k)$ and $N=R_2(a,b,k)$, then 
every $(M, N)$-patched cycle $(q_1q_2 \cdots q_mq_1, S_1, S_2, \ldots, S_{M})$ contains either a clique of size $k$ or a simple $(a, b)$-patched cycle $(q_1q_2 \cdots q_mq_1, T_1, T_2, \ldots, T_a)$ where $T_1, \ldots, T_a$ are contained in pairwise distinct sets of $S_1, \ldots, S_{M}$.
\end{proposition}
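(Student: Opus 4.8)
The plan is to package the vertices $\bigcup_{i} S_i$ as a combinatorial grid, apply the rectangular Ramsey lemma (Proposition~\ref{prop:recramsey}), and then observe that restricting the index sequence $b_1<\cdots<b_N$ to a subsequence automatically inherits the staircase adjacency pattern that defines a patched cycle.

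First I would fix notation: write $S_i=\{s^i_1,\ldots,s^i_N\}$ and let $1\le b_1<b_2<\cdots<b_N\le m$ be a sequence witnessing that $(q_1q_2\cdots q_mq_1, S_1,\ldots,S_M)$ is an $(M,N)$-patched cycle, so that $s^i_j$ is adjacent to $q_{b_j}$ and non-adjacent to $q_{b_x}$ for every $x\in\{1,\ldots,N\}\setminus\{1,\ldots,j\}$. Since the sets $\{q_1,\ldots,q_m\},S_1,\ldots,S_M$ are pairwise disjoint and $\abs{S_i}=N$, the vertices $s^i_j$ with $(i,j)\in\{1,\ldots,M\}\times\{1,\ldots,N\}$ are pairwise distinct. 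Let $H$ be the graph on $\{1,\ldots,M\}\times\{1,\ldots,N\}$ in which $(i,j)$ and $(i',j')$ are adjacent if and only if $s^i_js^{i'}_{j'}\in E(G)$; thus $H$ is isomorphic to the induced subgraph $G[\,\bigcup_i S_i\,]$.

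Next I would apply Proposition~\ref{prop:recramsey} to $H$ with the given parameters $a,b,k$, using $M=R_1(a,b,k)$ and $N=R_2(a,b,k)$. If $H$ has a clique of $k$ vertices, then so does $G$, and we are done. Otherwise we obtain $X\subseteq\{1,\ldots,M\}$ and $Y\subseteq\{1,\ldots,N\}$ with $\abs{X}=a$, $\abs{Y}=b$, and $X\times Y$ independent in $H$; equivalently, $\{s^i_j: i\in X,\ j\in Y\}$ is an independent set in $G$. Write $X=\{x_1<x_2<\cdots<x_a\}$ and $Y=\{j_1<j_2<\cdots<j_b\}$, put $t^p_q:=s^{x_p}_{j_q}$ and $T_p:=\{t^p_1,t^p_2,\ldots,t^p_b\}$ for $p\in\{1,\ldots,a\}$, and set $b'_q:=b_{j_q}$, so that $1\le b'_1<b'_2<\cdots<b'_b\le m$. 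Each $T_p\subseteq S_{x_p}$, and $S_{x_1},\ldots,S_{x_a}$ are pairwise distinct.

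It remains to check that $(q_1q_2\cdots q_mq_1, T_1,\ldots,T_a)$ is a simple $(a,b)$-patched cycle. The induced cycle $q_1q_2\cdots q_mq_1$ is unchanged. For every $p\in\{1,\ldots,a\}$ and $q\in\{1,\ldots,b\}$, the vertex $t^p_q=s^{x_p}_{j_q}$ is adjacent to $q_{b_{j_q}}=q_{b'_q}$; and if $z\in\{1,\ldots,b\}\setminus\{1,\ldots,q\}$ then $j_z>j_q$, so $j_z\in\{1,\ldots,N\}\setminus\{1,\ldots,j_q\}$, whence $s^{x_p}_{j_q}$ is non-adjacent to $q_{b_{j_z}}=q_{b'_z}$ by the defining property of the original patched cycle. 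This is exactly condition~(2) in the definition of a $(a,b)$-patched cycle, so $(q_1q_2\cdots q_mq_1, T_1,\ldots,T_a)$ is one; it is simple because $T_1\cup\cdots\cup T_a=\{s^i_j:i\in X,\ j\in Y\}$ is independent in $G$. There is no real obstacle in this argument: the combinatorial content is entirely carried by Proposition~\ref{prop:recramsey}, and the only point requiring care is the last verification, namely that passing to a subsequence $b_{j_1}<\cdots<b_{j_b}$ of $b_1<\cdots<b_N$ preserves the ``non-adjacent to everything strictly to the right'' condition, which holds because $z\mapsto j_z$ maps $\{1,\ldots,b\}\setminus\{1,\ldots,q\}$ into $\{1,\ldots,N\}\setminus\{1,\ldots,j_q\}$.
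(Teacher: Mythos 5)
Your proposal is correct and matches the paper's proof essentially step for step: apply Proposition~\ref{prop:recramsey} to the induced subgraph on $\bigcup_i S_i$ (viewed as a graph on the grid $\{1,\ldots,M\}\times\{1,\ldots,N\}$), obtain either a $k$-clique or an independent $a\times b$ subgrid, and take $T_p$ to be the corresponding rows restricted to the column set $Y$. The only difference is that you spell out the final verification of condition~(2), which the paper leaves as ``easy to verify.''
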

\begin{proof}
For each $i\in \{1, \ldots, M\}$, let $S_i:=\{s^i_1, s^i_2, \ldots, s^i_N\}$ and let $1\le b_1<b_2< \cdots <b_N\le m$ be a sequence such that
\begin{itemize}
\item for each $i\in \{1, \ldots, M\}$ and each $j\in \{1, \ldots, N\}$, 
$s^i_j$ is adjacent to $q_{b_j}$ and non-adjacent to $q_{b_x}$ for all $x>j$.
\end{itemize}
By Proposition~\ref{prop:recramsey}, either $G$ has a clique of $k$ vertices or
 there exist $X\subseteq \{1, 2, \ldots, M\}$ and $Y\subseteq \{1, 2, \ldots, N\}$ such that
 $\{s^i_j:i\in X, j\in Y\}$ is an independent set and $\abs{X}=a$, $\abs{Y}=b$.
 In the latter case, let $X=\{x_1,x_2,\ldots,x_a\}$
and $T_i:=\{s^{x_i}_j:j\in Y\}$ for each $i=1,2,\ldots,a$.
 It is easy to verify that $(q_1q_2 \cdots q_mq_1,T_1,T_2,\ldots,T_a)$ is a simple $(a,b)$-patched cycle.
\end{proof}

\subsection{Obtaining a patched cycle with large width and length}\label{subsec:obtainhugepatchedpath}

We prove the following.

\begin{proposition}\label{prop:intermediate}
Let $k>0$, $\ell>0$, $n\ge 2$ be  integers and let $M:=\ell n^k$.
Let $G$ be a graph on the vertex set $\{q_1, q_2, \ldots, q_{M}\}\cup V_1\cup V_2\cup \ldots \cup V_k$ such that
\begin{itemize}
\item $\{q_1, q_2, \ldots, q_{M}\}$, $V_1$, $V_2$, $\ldots$, $V_k$ are pairwise disjoint,
\item $q_1q_2q_3 \cdots q_{M}q_1$ is an induced cycle, 
\item for each $i\in \{1, 2, \ldots, M\}$ and each $j\in \{1, \ldots, k\}$, $q_i$ has a neighbor in $V_j$,
\item for each vertex $v\in V(G)\setminus \{q_1, q_2, \ldots, q_M\}$, $v$ has at most $n-1$ neighbors in $\{q_1, q_2, \ldots, q_M\}$. 
\end{itemize}
Then $G$ contains a $(k, \ell)$-patched cycle $(q_1q_2 \cdots q_Mq_1, S_1, S_2, \ldots, S_k)$ such that
$S_i\subseteq V_i$ for each $i\in \{1, \ldots, k\}$.
\end{proposition}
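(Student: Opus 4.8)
## Proof Proposal

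The plan is to build the sequence $b_1 < b_2 < \cdots < b_\ell$ together with the sets $S_i \subseteq V_i$ greedily, processing the indices $b_j$ one at a time from $j=\ell$ down to $j=1$ while repeatedly restricting to a large "surviving" arc of the cycle. The key quantitative bookkeeping is that we must choose $b_j$ so that, for \emph{every} $i \in \{1,\dots,k\}$ simultaneously, some vertex of $V_i$ is adjacent to $q_{b_j}$ but to none of $q_{b_{j+1}},\dots,q_{b_\ell}$; and we must do this in a way that leaves enough of the cycle untouched to continue.

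Here is the structure I would use. Maintain an index set $A \subseteq \{1,\dots,M\}$ forming a contiguous arc of the cycle (an interval of consecutive indices, none of which is one of the already-chosen $b_{j+1},\dots,b_\ell$), together with the already-selected vertices $s^i_{j+1},\dots,s^i_\ell \in V_i$ for each $i$. The invariant is that none of these already-selected vertices has a neighbor among $\{q_a : a \in A\}$ — so any vertex we later pick from inside $A$ is automatically non-adjacent to all later $q_{b_x}$'s, giving clause (2) of the patched-cycle definition. Initially $A = \{1,\dots,M\}$ and nothing is selected. At a generic step, $|A| \ge \ell n^{k-\ell+j}$ or so (the exact bound is $M$ divided by roughly $n$ for each already-processed level, times whatever we discard). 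Pick the middle index $a^\star$ of $A$ and set $b_j := a^\star$. Since each $q_{b_j}$ has a neighbor in every $V_i$, for each $i$ choose $s^i_j \in V_i$ adjacent to $q_{b_j}$. Now the only obstacle is that $s^i_j$ might be adjacent to some $q_a$ with $a \in A$: but by hypothesis $s^i_j$ has at most $n-1$ neighbors among all the $q$'s, so it has at most $n-1$ neighbors in $A$. Delete from $A$ the vertex $a^\star$ together with, for each $i$, the at most $n-1$ neighbors of $s^i_j$ in $A$; this kills at most $k(n-1)+1$ indices, but more importantly it may split $A$ into up to $k(n-1)+1$ subarcs — take the largest one as the new $A$, shrinking $|A|$ by a factor of at most $k(n-1)+1$. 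Wait — this factor is too big; the factor needs to be $n$, not $kn$, for $M = \ell n^k$ to suffice.

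The fix, and the part I expect to be the real obstacle, is to process the coordinates $i=1,\dots,k$ in separate outer rounds rather than all at once. In round $i$ we already have $b_1 < \cdots < b_\ell$ fixed from the first $i-1$ rounds having \emph{also} been required for coordinates $1,\dots,i-1$; now within the surviving data we want to additionally choose $s^i_1,\dots,s^i_\ell \in V_i$. Actually the cleaner route: do the construction one level $j$ at a time but at level $j$, pick $b_j$ so that it has the right neighbors in \emph{one} $V_i$ at a time, iterating a sub-selection over $i=1,\dots,k$; each of the $k$ sub-steps within level $j$ costs a factor $n$ (discard the $\le n-1$ bad neighbors in the current arc plus the chosen vertex, keep the largest of $\le n$ pieces), so level $j$ costs a factor $n^k$... which again overshoots. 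Reconcile via: we have $\ell$ levels and $k$ coordinates, total $k\ell$ selection steps, each costing factor $n$, so we'd need $M \ge \ell n^{k\ell}$, not $\ell n^k$. So the intended argument must be subtler: presumably one observes that the $\ell n^k$ bound comes from iterating over coordinates $i$ as the outer loop and, for each fixed $i$, the vertices of $V_i$ chosen across all $\ell$ levels are forced by a single pigeonhole/greedy pass costing only a factor $n$ total per coordinate. Concretely: for coordinate $i$ alone, given the cycle, greedily pick $b_\ell, b_{\ell-1}, \dots$ where at each step we need only that $q_{b_j}$ has a neighbor in $V_i$ avoiding the arc to its right — and crucially the non-adjacency constraint is only "to the right", so a left-to-right sweep lets us pick all $\ell$ points of this coordinate while discarding at most $(n-1)$ indices per point in a way that affects only that coordinate. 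Iterating this over $i=1,\dots,k$, each coordinate's sweep multiplies the required length by $n^{??}$ and the $M = \ell n^k$ arithmetic works out to one factor of $n$ per coordinate times something. I would therefore carry out the proof by induction on $k$: the case $k=1$ is a clean left-to-right greedy sweep on the cycle using the $(n-1)$-bounded-degree hypothesis (pick neighbors of $q_{b_j}$ in $V_1$, discard their $\le n-1$ cycle-neighbors, keep a large arc), and the inductive step peels off $V_k$, applies $k=1$ reasoning to get a long sub-arc where coordinate $k$ is already arranged, then applies induction with $k-1$ on that sub-arc (whose length is $M/n^{\Theta(1)} \ge \ell n^{k-1}$).

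Thus the proof outline is: (i) reduce to $k=1$, proving it directly by a greedy left-to-right selection of $b_1 < \cdots < b_\ell$ and associated $s^1_j \in V_1$, maintaining a shrinking interval of valid indices and discarding at each step the chosen index plus $\le n-1$ neighbors of the newly chosen $s^1_j$; verify $\ell n$ (hence $\ell n^k$ for $k=1$) indices suffice and that the non-adjacency-to-the-right condition (2) holds automatically because we sweep rightward; (ii) for the inductive step, apply the $k=1$ argument to $V_k$ to extract an arc $q_{c_1}\cdots q_{c_{M'}}$ with $M' \ge M/n \ge \ell n^{k-1}$ on which suitable $S_k \subseteq V_k$ and indices are selected, note this arc still satisfies all four hypotheses with $k-1$ (each $q_{c_t}$ still has a neighbor in each of $V_1,\dots,V_{k-1}$, bounded degree is inherited), and invoke the inductive hypothesis to get $S_1,\dots,S_{k-1}$; (iii) check the combined indexing $b_1 < \cdots < b_\ell$ can be taken common to all levels because each level's constraint is only a rightward non-adjacency, which is preserved under passing to a subsequence of indices. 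The main obstacle is getting the quantitative pigeonhole exactly right so that one factor of $n$ per coordinate genuinely suffices — in particular, ensuring that the $\le n-1$ discarded neighbors at each of the $\ell$ steps within a single coordinate's sweep can be absorbed into a \emph{single} factor of $n$ on the interval length rather than $n^\ell$, which works precisely because after choosing $q_{b_j}$ we only ever move rightward, so the total discarded set has size at most $(n-1)\ell + \ell$ and we need the final arc to have length $\ge \ell$, giving the requirement $M \ge \ell n$ (not $\ell n^\ell$) for one coordinate.
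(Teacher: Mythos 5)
Your final outline converges to essentially the paper's proof: induction on $k$, with the base case $k=1$ handled by a left‑to‑right greedy sweep that exploits the degree bound to show each new selection costs at most $n-1$ additional forbidden indices, so that $M=\ell n$ suffices for one coordinate, and with the inductive step applying one more greedy sweep to the extra coordinate on a subsequence of the already‑chosen indices $b_1<\cdots<b_{\ell'}$, relying on the observation that the ``non‑adjacent to all later $q_{b_x}$'' condition is preserved under passing to any subsequence. That observation is exactly the crux, and you identify it correctly in item (iii).

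The one place your outline deviates is the order of the two operations inside the inductive step, and this creates a technical snag you do not address. You propose to process $V_k$ first (on the full cycle) to extract a length‑$\ell n^{k-1}$ subsequence, and then ``invoke the inductive hypothesis'' on that subsequence. But the proposition's hypotheses require that the $q_i$'s form an \emph{induced cycle}, and the subsequence $q_{b_1},\ldots,q_{b_{\ell n^{k-1}}}$ produced by your sweep is neither a cycle nor even a contiguous arc — calling it an ``arc'' in step (ii) is incorrect, and a contiguous arc is not what the $k=1$ sweep produces. So the inductive hypothesis, as stated, does not literally apply to what you feed it. The paper sidesteps this entirely by reversing the order: first invoke the inductive hypothesis on the \emph{original} cycle to get a $(k-1,\ell n)$-patched cycle with index sequence $b_1<\cdots<b_{\ell n}$, and only then run the greedy sweep for the single new coordinate $V_k$ on that index sequence — a step which never needs any cycle structure, only the linear order, and which refines to a common subsequence $b_{c_1}<\cdots<b_{c_\ell}$ that works simultaneously for all $k$ coordinates by your rightward‑preservation observation. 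Your version can be salvaged by first reformulating the proposition for an arbitrary linearly ordered set of vertices $q_1,\ldots,q_M$ (the cycle hypothesis is not used in the proof of this proposition), but as written there is a genuine, if small, gap at exactly this join.

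Two smaller imprecisions worth flagging: your phrase ``keep a large arc'' in the $k=1$ description suggests discarding whole pieces and retaining the largest, which would cost a multiplicative factor $n$ \emph{per step} ($n^\ell$ in total) rather than the additive $(n-1)$ forbidden indices per step that the counting argument actually incurs; the paper's version does not maintain an interval but rather counts total neighbors of the already‑chosen $s_1,\ldots,s_i$ and concludes that some $q_j$ with $b_i<j\le(n-1)i+1$ is unblocked. And the long preamble exploring the middle‑index and per‑level‑over‑all‑coordinates variants, while honest about your process, correctly diagnoses that those fail but could be cut once the right structure (one coordinate per factor of $n$, via the rightward‑only constraint) is in hand.
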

\begin{proof}
We prove the statement by induction on $k$.

First assume that $k=1$.
Let  $s_1$  be a neighbor of $q_1$ in $V_1$, and let $b_1:=1$. 
Let $i$ be the maximum integer satisfying the following: there exist distinct vertices $s_1,s_2,\ldots,s_i$ of $V_1$ 
and a sequence $b_1<b_2<\cdots<b_i$ where for all $x\in \{1, \ldots, i\}$, 
$s_x$ is adjacent to $q_{b_x}$
and when $x>1$, 
\begin{itemize}
\item $b_x$ is the minimum integer such that $b_x>b_{x-1}$ and $q_{b_x}$ has no neighbors in $\{s_1, \ldots, s_{x-1}\}$. 
\end{itemize}
Such $i$ exists, because $i=1$ satisfies the conditions.
Suppose that $i<\ell$. 
Note that every vertex $q_j$ for $1\le j\le b_i$ has a neighbor in
$\{s_1, \ldots, s_i\}$, otherwise, let $j'$ be the smallest integer such that
$b_{j'}>j$ and we may replace $b_{j'}$ with $j$, contradicting our assumption
on $b_{j'}$.
Therefore, vertices in $\{s_1, \ldots, s_i\}$ may have at most $(n-1)i-b_i$ neighbors $q_j$ for $j>b_i$.
It implies that there exists $j$ with $b_i<j\le (n-1)i+1\le \ell n$
such that $q_j$ has no neighbors in $\{s_1, \ldots, s_i\}$.
So, we can extend the sequence by taking  $b_{i+1}:=j$ and a neighbor
of $q_{b_{i+1}}$ in $V_1$ as $s_{i+1}$, contradicting to the maximality of $i$. Thus, we have $i\ge \ell$.
Note that by the choice of $b_1,\ldots, b_i$, this sequence satisfies the property that 
\begin{itemize}
\item for each $x\in \{1, \ldots, i\}$, $s_x$ is adjacent to $q_{b_x}$ and non-adjacent to $q_{b_y}$ for all $y>x$.
\end{itemize}
We conclude $G$ contains a $(1, \ell)$-patched cycle $(q_1q_2 \cdots q_Mq_1, S_1)$
with $S_1\subseteq V_1$.

Now, suppose $k>1$.
By the induction hypothesis, $G$ contains a $(k-1, \ell n)$-patched cycle $(q_1q_2 \cdots q_Mq_1, T_1, \ldots, T_{k-1})$ such that $T_i\subseteq V_i$ for each $i\in \{1, \ldots, k-1\}$.
Let $T_i=\{t^i_1, t^i_2, \ldots, t^i_{\ell n}\}$ for each $i\in \{1, \ldots, k-1\}$ and  
let $1\le b_1< b_2<\cdots< b_{\ell n}\le M$ be the sequence such that 
\begin{itemize}
\item for each $i\in \{1, \ldots, k-1\}$ and $j\in \{1,2,\ldots, \ell n\}$, 
$t^i_j$ is adjacent to $q_{b_j}$
and non-adjacent to $q_{b_x}$ for all $x\in \{j+1, \ldots, \ell n\}$.
\end{itemize}
For each $i\in \{1, \ldots, k-1\}$, let $f_i:\{b_1, \ldots, b_{\ell (n-1)+1}\}\rightarrow T_i$ be the bijection such that $f_i(b_j)=t^i_j$.

Let  $s^k_1\in V_k$  be a neighbor of $q_{b_1}$, and let $c_1:=1$. 
Let $i$ be the maximum integer satisfying the following: there exist distinct vertices $s^k_1,s^k_2,\ldots,s^k_i$ of $V_k$ 
and a sequence $c_1<c_2<\cdots<c_i$ where 
for all $x\in \{1, \ldots, i\}$, $q_{b_{c_x}}$ is adjacent to $s^k_x$, and when $x>1$, 
\begin{itemize}
\item 
$c_x$ is the minimum integer such that $c_x>c_{x-1}$ and $q_{b_{c_x}}$ has no neighbors in $\{s^k_1, \ldots, s^k_{x-1}\}$, 
\end{itemize}
Such $i$ exists, because $i=1$ satisfies the conditions.
Suppose that $i<\ell$. 
Note that every vertex $q_{b_j}$ in $\{q_{b_1}, q_{b_2}, \ldots,
q_{b_{{c_i}}}\}$ has a neighbor in $\{s^k_1, \ldots, s^k_i\}$,
otherwise, let $j'$ be the smallest integer such that $c_{j'}>j$ and
we may replace $c_{j'}$ by $j$, contradicting our assumption on $c_{j'}$.
Therefore, vertices in $\{s^k_1, \ldots, s^k_i\}$ may have at most
$(n-1)i-c_i$ neighbors
in  $\{ q_{b_j}: c_i<j\le \ell n\}$.
It implies that there exists $j$ with $c_i<j\le (n-1)i+1\le \ell n$
such that $q_{b_{j}}$ has no neighbors in $\{s^k_1, \ldots, s^k_i\}$.
So, we can extend the sequence by taking  $c_{i+1}:=j$ and a neighbor
of $q_{c_{i+1}}$ in $V_k$ as $s^k_{i+1}$, contradicting to the maximality of $i$. Thus, we have $i\ge \ell$.
Note that by the choice of $c_1,\ldots, c_i$, this sequence satisfies the property that 
\begin{itemize}
\item for each $x\in \{1, \ldots, i\}$, $s^k_x$ is adjacent to $q_{b_{c_x}}$ and non-adjacent to $q_{b_{c_y}}$ for all $y>x$.
\end{itemize}
For each $i\in \{1, \ldots, k-1\}$ and $j\in \{1, \ldots, \ell\}$, let $s^i_j$ be the vertex $f_i(c_j)$.
Then 
\[(q_1q_2 \cdots q_Mq_1, \{s^1_1, \ldots, s^1_{\ell}\}, \ldots, \{s^k_1, \ldots, s^k_{\ell}\})\] is a $(k, \ell)$-patched cycle
such that  $\{s^i_1, \ldots, s^i_{\ell}\}\subseteq V_i$ for each $i\in \{1, \ldots, k\}$.
\end{proof}

\section{Main theorem}\label{sec:mainthm}

We use the following theorem.

\begin{theorem}[Chudnovsky, Scott, and Seymour~\cite{ChudnovskySS2016}]\label{thm:longcycle}
 For every integer $n\ge 3$, the class of graphs having no induced cycle of length at least $n$ is $\chi$-bounded.
\end{theorem}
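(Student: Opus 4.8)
The plan is to reduce the statement to a bound on chromatic number for a fixed clique number: by the definition of $\chi$-boundedness, and since the class is hereditary, it suffices to show that for each fixed $q$ there is a constant $f_n(q)$ such that every graph $G$ with $\omega(G)\le q$ and with no induced cycle of length at least $n$ satisfies $\chi(G)\le f_n(q)$. We may assume $n\ge 4$, since for $n=3$ the graphs in question are forests, and we may assume $G$ is connected, indeed $2$-connected, since $\chi(G)$ equals the maximum of $\chi(B)$ over the blocks $B$ of $G$, each block is $2$-connected or a single edge, and each block inherits the absence of a long induced cycle.

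The engine is a breadth-first levelling. Fix $v\in V(G)$ and let $L_i$ be the set of vertices at distance exactly $i$ from $v$; there is no edge between $L_i$ and $L_j$ whenever $\abs{i-j}\ge 2$, so colouring the even-indexed levels from one palette of size $t:=\max_i\chi(G[L_i])$ and the odd-indexed levels from a disjoint palette of the same size gives $\chi(G)\le 2t$. Hence some level $L:=L_s$ satisfies $\chi(G[L])\ge\chi(G)/2$, and $s\ge 1$ once $\chi(G)\ge 3$. The hypothesis enters through the following mechanism: a sufficiently long induced path contained in the single level $L_s$ can be closed into an induced cycle of length at least that of the path, by routing between its two endpoints through the lower levels --- every vertex of $L_s$ has a neighbour in $L_{s-1}$, and the breadth-first tree joins any two vertices of $L_{s-1}$ through a path inside $L_0\cup\cdots\cup L_{s-1}$. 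If this closing route could always be kept chord-free, then no level of any such $G$ could contain a long induced path, so by Gy\'{a}rf\'{a}s's theorem that the class of graphs with no induced path on $\ell$ vertices is $\chi$-bounded~\cite{Gyarfas1987} each $G[L_s]$ would have bounded chromatic number, contradicting the choice of $L_s$ once $\chi(G)$ is large.

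The real content of the theorem is that the closing route need not be chord-free: a vertex of $L_{s-1}$ on the route --- in particular either of the two neighbours in $L_{s-1}$ chosen for the endpoints --- may be adjacent to interior vertices of the path and destroy inducedness. Controlling this forces the levelling to be applied not once but recursively: pass to a component of $G[L_s]$ of undiminished chromatic number, re-level inside it, and iterate, each step appending a new ``connector'' layer beneath the current working set --- a layer carrying a controlled connectivity structure of exactly the kind described in Section~\ref{sec:overview}. After a number of iterations depending only on $n$, the accumulated connector layers supply enough room to choose a long induced path together with a closing route that avoids every potential chord, so that a long induced cycle appears and contradicts the hypothesis; and if the construction stalls, it does so only because a clique of size $q+1$ has been exposed, which contradicts $\omega(G)\le q$. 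Tracking the loss in chromatic number over the boundedly many levelling steps then yields $f_n(q)$. The step I expect to absorb the bulk of the work is exactly this chord control --- upgrading ``some level has large chromatic number, hence a long induced path'' to ``a long induced cycle'' without the lower-level routing introducing chords --- which is why a single pass of Gy\'{a}rf\'{a}s's path argument does not suffice and the full recursive levelling machinery of Chudnovsky, Scott, and Seymour is needed.
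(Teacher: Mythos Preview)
The paper does not prove this statement: Theorem~\ref{thm:longcycle} is quoted from Chudnovsky, Scott, and Seymour~\cite{ChudnovskySS2016} and used purely as a black box in the proof of Theorem~\ref{thm:main}. There is therefore no ``paper's own proof'' to compare your proposal against.

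Your write-up is not a proof either, and you say as much in its last sentence: after the standard reductions and the first BFS pass you correctly identify the obstacle (the closing route through lower levels need not be chord-free, so a long induced path in a level does not immediately give a long hole), and then you defer the resolution to ``the full recursive levelling machinery of Chudnovsky, Scott, and Seymour.'' That is exactly the step that carries all the content, and your description of how the recursion terminates --- ``the accumulated connector layers supply enough room \dots\ and if the construction stalls, it does so only because a clique of size $q+1$ has been exposed'' --- is a placeholder rather than an argument; in particular, the termination mechanism in~\cite{ChudnovskySS2016} is not simply ``a large clique appears,'' and nothing you have written explains why boundedly many iterations suffice or how the chord control is actually achieved. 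As a motivational sketch of why the theorem is nontrivial this is fine, and it is consistent with the paper's own treatment (cite and move on); but it should not be labelled a proof.
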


\begin{theorem}\label{thm:main}
 For every integer $n\ge 3$, the class of graphs with no $W_n$ vertex-minor is $\chi$-bounded. 
\end{theorem}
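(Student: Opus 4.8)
The plan is to combine the leveling argument sketched in Section~\ref{sec:overview} with the structural lemmas of Sections~\ref{sec:manufacturingwheel} and~\ref{sec:patchedpath}. Fix $n\ge 3$ and $q\ge 1$; we must bound $\chi(G)$ for every graph $G$ with no $W_n$ vertex-minor and $\omega(G)\le q$. First I would define a tower of constants top-down: set $M=M(n)$ from Proposition~\ref{prop:finalconfig2}, then $(M_0,N_0)=(R_1(2,M,q+1),R_2(2,M,q+1))$ from Proposition~\ref{prop:recramsey}, then an $\ell^*=\max(N_0,M(n),n+3)$ and apply Proposition~\ref{prop:intermediate} with $k=2$, $\ell=\ell^*$, $n$ to get that a $(2,\ell^*)$-patched cycle appears once we have an induced cycle of length $M_0\cdot\ell^*\cdot n^2$ together with two ``feeding'' vertex sets $V_1,V_2$ each having a neighbor on every cycle vertex and each of whose vertices sees at most $n-1$ cycle vertices; finally invoke Theorem~\ref{thm:longcycle} to choose $\chi_0=\chi_0(n,q)$ so large that any graph with no $W_n$ vertex-minor, clique number $\le q$, and no induced cycle of length $\ge M_0\ell^* n^2$ already has chromatic number below a known bound. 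The $\chi$-bounding function will be (roughly) $2^{i^*}\cdot\chi_0$ where $i^*=2$ suffices — actually the recursion on the number of $Y/Z$ layers terminates at a fixed depth because Proposition~\ref{prop:intermediate} only needs $k=2$ feeding sets, so the leveling recursion need only be iterated twice.

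The core argument runs as follows. Assume $\chi(G)$ exceeds the chosen threshold. Take any connected component of largest chromatic number, pick a root, and form the distance leveling $L_0,L_1,\dots,L_m$; since edges only join consecutive levels, some $L_t$ has chromatic number at least $\chi(G)/2$. If $t=1$ then $L_1$ together with the root already contains (after the connectivity reduction of Lemma~\ref{lem:reduceconnected} / the partial-wheel Lemma~\ref{lem:fromlargerwheel}) a long induced cycle with a universal apex, hence a $W_n$ vertex-minor — contradiction. If $t\ge 2$, set $X_1=L_t$, $Y_1=L_{t-1}$, $Z_1=L_0\cup\cdots\cup L_{t-2}$; every vertex of $X_1$ has a neighbor in $Y_1$, none in $Z_1$, every vertex of $Y_1$ has a neighbor in $Z_1$, and $Z_1$ is connected with the root $r_1$ reachable through $Y_1$-private paths — exactly the hypotheses needed later. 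Recurse inside $G[X_1]$ to extract $X_2,Y_2,Z_2$ with the analogous properties and with no edges between $Z_1$ and $Z_2\cup Y_2$ (automatic since $Z_1$ is anti-complete to $X_1\supseteq X_2\cup Y_2\cup Z_2$). Now $\chi(G[X_2])$ is still huge, so by Theorem~\ref{thm:longcycle} it contains an induced cycle $C=q_1\cdots q_{m'}q_1$ of the required length $M_0\ell^* n^2$. Every vertex of $C$ lies in $X_2\subseteq X_1$, hence has a neighbor in $Y_1$ and in $Y_2$; and every vertex of $Y_1\cup Y_2$ has at most $n-1$ neighbors on $C$, for otherwise Lemma~\ref{lem:fromlargerwheel} gives a $W_n$ vertex-minor directly. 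Apply Proposition~\ref{prop:intermediate} (with $V_1=Y_1$, $V_2=Y_2$) to obtain a $(2,\ell^*)$-patched cycle $(C,S_1,S_2)$ with $S_j\subseteq Y_j$; then Proposition~\ref{prop:intermediatesimple}/Proposition~\ref{prop:recramsey} turns it into a \emph{simple} $(2,M)$-patched cycle (the clique-of-size-$q+1$ alternative is excluded by $\omega(G)\le q$), with the two parts coming from $Y_1$ and $Y_2$. Set $T_1=Z_1$, $T_2=Z_2$: there are no edges between $C$ and $T_1\cup T_2$ (as $C\subseteq X_2$ and $X_1,X_2$ are anti-complete to $Z_1$, $Z_2$ resp.), no edges between $S_1\subseteq Y_1$ and $T_2=Z_2$ (since $Z_2\subseteq X_1$ is anti-complete to $Y_1$... wait — rather: $Y_1$ is anti-complete to $X_1\supseteq Z_2$), each vertex of $S_i$ has a neighbor in $T_i$, and the private-path condition to $r_i$ holds by construction of the leveling. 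This is precisely the configuration of Proposition~\ref{prop:finalconfig2}, which yields a $W_n$ vertex-minor — contradiction. Hence $\chi(G)$ is bounded by the threshold, which is a function of $n$ and $q=\omega(G)$ only.

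The bookkeeping of adjacencies between the layers $X_i,Y_i,Z_i$ is where I expect the real care to be needed: one must be precise that $Z_1$ is anti-complete not just to $X_1$ but to everything carved out of $X_1$ later (all of $X_2,Y_2,Z_2,S_1,S_2$ and the cycle $C$), that $Y_1$ is anti-complete to $Z_2$, and that the ``private path to $r_x$'' property survives restriction to the smaller vertex sets used in Proposition~\ref{prop:finalconfig2}. The second delicate point is verifying the numerical hypotheses of each black box chain up: Proposition~\ref{prop:intermediate} needs a cycle of length $\ell^* n^2$ \emph{times} the width parameter $M_0$ coming from Proposition~\ref{prop:recramsey}, which in turn needs length $N_0=R_2(2,M,q+1)$ and width $M_0=R_1(2,M,q+1)$, so the induced-cycle length demanded from Theorem~\ref{thm:longcycle} is $M_0\cdot N_0\cdot n^2$ (with $N_0\ge M(n)$ and $N_0\ge n+3$ to feed Proposition~\ref{prop:finalconfig2} and Lemma~\ref{lem:fromlargerwheel}); one then picks $\chi_0$ from Theorem~\ref{thm:longcycle} accordingly and the final $\chi$-bounding function is $4\chi_0(n,q)$ because the leveling is applied twice, each time losing a factor $2$.
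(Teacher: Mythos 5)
Your high-level outline is faithful to the paper's approach: leveling, long induced cycle via Theorem~\ref{thm:longcycle}, patched cycles, rectangular Ramsey, then Proposition~\ref{prop:finalconfig2}. But there is a genuine gap at the heart of your plan: you claim that \emph{``the leveling recursion need only be iterated twice''} because \emph{``Proposition~\ref{prop:intermediate} only needs $k=2$ feeding sets.''} This is wrong. Proposition~\ref{prop:finalconfig2} does indeed only consume a \emph{simple} $(2, M(n))$-patched cycle, but the step that \emph{produces} a simple patched cycle --- Proposition~\ref{prop:intermediatesimple}, which uses Proposition~\ref{prop:recramsey} --- requires as input an $(M,N)$-patched cycle of \emph{width} $M = R_1(2, M(n), q+1)$, not width~$2$. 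The rectangular Ramsey argument searches over $M$ ``columns'' $S_1,\dots,S_M$ and $N$ ``rows'' in order to locate two columns and many rows forming an independent rectangle; a width-$2$ patched cycle $(C,S_1,S_2)$ offers no slack in the column direction, and you cannot in general extract a simple $(2,M(n))$-patched cycle from it. Concretely: if $s^1_j$ is adjacent to $s^2_j$ for every $j$ and there are no other edges among $S_1 \cup S_2$, then no choice of row set $J$ makes $\{s^1_j, s^2_j : j \in J\}$ independent, yet $\omega \le 2$, so the clique alternative in Proposition~\ref{prop:recramsey} does not fire either. Hence the leveling must be iterated $R_1 = R_1(2, M(n), q+1)$ times (to produce $Y_1,\dots,Y_{R_1}$), Proposition~\ref{prop:intermediate} must be applied with $k = R_1$ (so the cycle needs length $R_2\,n^{R_1}$, exponential in $R_1$, not your polynomial $M_0 N_0 n^2$), and the resulting $\chi$-bounding function acquires a factor $2^{R_1}$, not your constant $4$.

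A secondary error worth flagging is in your anti-completeness check: you write that ``$Y_1$ is anti-complete to $X_1\supseteq Z_2$,'' which is false --- every vertex of $X_1$ has a neighbor in $Y_1$ by the leveling construction. The correct assignment (which the paper also needs) pairs the \emph{later}-indexed patch with the \emph{earlier}-indexed $Z$-set: with $a<b$, the invariant gives ``no edges between $Z_a$ and $Y_b \cup Z_b$,'' so one sets $S_1 \subseteq Y_b$, $T_1 = Z_b$, $S_2 \subseteq Y_a$, $T_2 = Z_a$, and then ``no edges between $S_1$ and $T_2$'' holds. In your two-layer notation that means $S_1 \subseteq Y_2$, $T_2 = Z_1$. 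This mis-ordering is repairable, but the width-$2$ shortcut is not: without $R_1$ layers you simply do not have the raw material the Ramsey step needs.
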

\begin{proof}
We recall that $R_1, R_2$ are the functions defined in Proposition~\ref{prop:recramsey}, and 
$M$ is the function defined in Proposition~\ref{prop:finalconfig2}.
Let $g_k$ be the $\chi$-bounding function of
Theorem~\ref{thm:longcycle} such that for every graph $G$ having no
induced cycle of length at least $k$ and all induced subgraphs $H$ of
$G$, $\chi(H)\le g_k(\omega(H))$.

Let $G$ be a graph such that
$\omega(G)\le q$ for some positive integer $q$ and it has no vertex-minor isomorphic to $W_n$.
Let $R_1:=R_1(2, M(n), q+1)$, $R_2:=R_2(2, M(n), q+1)$, and $r:=R_2n^{R_1}$.
We claim that $\chi(G) \le g_r(q)\cdot 2^{R_1}$.
Suppose not.
We may assume that $G$ is connected as we can color each connected component separately.

We will find a simple $(2, M(n))$-patched cycle with additional vertex
sets described in Proposition~\ref{prop:finalconfig2}. 

Let $v_1$ be a vertex of $G$ and for $i\ge 0$, let $L^1_i$ be the set of all vertices of $G$ whose distance to $v_1$ is $i$ in $G$. 
If each $L^1_j$ is $g_r(q)\cdot 2^{R_1-1}$-colorable, then $G$ is $g_r(q)\cdot 2^{R_1}$-colorable.  
Therefore there exists a level $L^1_t$ such that  $\chi(G[L^1_t])  >
g_r(q)\cdot 2^{R_1-1}\ge g_r(q)$.
Thus $G[L^1_t]$ contains an induced cycle of length at least $r$ by Theorem~\ref{thm:longcycle}.
Since $r\ge n+3$ and $G$ has no vertex-minor isomorphic to $W_n$, by Lemma~\ref{lem:fromlargerwheel}, we have $t\ge 2$.
Let $X_1:=L^1_t$,  $Y_1:=L^1_{t-1}$,  $Z_1:=L^1_0\cup L^1_1\cup \cdots \cup L^1_{t-2}$, and 
$r_1$ be the vertex in $L^1_0$.
We note that
for every $v\in N_G(Y_1)\cap Z_1$, there is a path $P=p_0p_1p_2 \cdots p_{t-2}$ where $p_0=r_1$, $p_{t-2}=v$ and for each $i\in \{0, \ldots, t-2\}$, $p_i\in L^1_{i}$. This path satisfies that $N_G(Y_1)\cap V(P)=\{v\}$.

Let $i$ be the maximum integer in $\{1, 2, \ldots, R_1\}$ such that
there exist disjoint vertex sets $X_i$ and $Y_1, \ldots, Y_i$ and $Z_1, \ldots, Z_i$ such that   
\begin{itemize}
\item  $\chi(G[X_i])>g_r(q)\cdot 2^{R_1-i}$,
\item for each vertex $v\in X_i$ and each $x\in \{1, \ldots, i\}$, $v$ has a neighbor in $Y_x$ and no neighbors in $Z_x$, 
\item for each $x\in \{1, \ldots, i\}$,  every vertex in $Y_x$ has a neighbor in $Z_x$, 
\item for each $x\in \{1, \ldots, i\}$, 
there exists a vertex $r_x\in Z_x$ where 
for every $v\in N_G(Y_x)\cap Z_x$, there is a path $P$ from $v$ to $r_x$ in $G[Z_x]$ with $N_G(Y_x)\cap V(P)=\{v\}$,
\item for distinct integers $x,y\in \{1, \ldots, i\}$ with $x<y$, there are no edges between $Z_x$ and $Y_y\cup Z_y$. 
\end{itemize}
Such $i$ exists, because $(X_1, Y_1, Z_1)$ satisfies these conditions. We claim that $i=R_1$.

Suppose that $i<R_1$. We choose a connected component $H$ of $G[X_i]$ with chromatic number more than $g_r(q)\cdot 2^{R_1-i}$ and let $v$ be a vertex in $H$.
For $j\ge 0$, let $L_j$ be the set of all vertices of $H$ whose distance to $v$ is $j$ in $H$. 
Since $H$ cannot be colored with $g_r(q)\cdot 2^{R_1-i}$ colors, 
there exists $t>0$ such that $\chi(H[L_t])> g_r(q)\cdot 2^{R_1-(i+1)}\ge g_r(q)$.
Since $H[L_t]$ has chromatic number at least $g_r(q)$, by Theorem~\ref{thm:longcycle}, it contains an
induced cycle of length at least $r$. 
Since $r\ge n+3$, by Lemma~\ref{lem:fromlargerwheel}, we have $t\ge 2$.
Let $X_{i+1}:=L_t$, $Y_{i+1}:=L_{t-1}$, $Z_{i+1}:=L_0\cup L_1\cup \cdots \cup L_{t-2}$, and let $r_{i+1}$ be the vertex in $L_0$.
Then $X_{i+1}$ and $Y_1, \ldots, Y_{i+1}$ and $Z_1, \ldots, Z_{i+1}$ satisfy these conditions, and it contradicts to the choice of $i$. 
Therefore we have $i=R_1$. 

Since $\chi(G[X_{R_1 }])>g_r(q)$, $G[X_{R_1 }]$ contains an induced cycle $q_1q_2 \cdots q_mq_1$ with $m\ge r$.
We apply Proposition~\ref{prop:intermediate} to the subgraph of $G$ induced on $\{q_1, q_2, \ldots, q_m\}\cup Y_1\cup \cdots \cup Y_{R_1}$.
Since $m\ge r=R_2 n^{R_1}$ vertices, 
by Proposition~\ref{prop:intermediate},
$G$ contains an $(R_1, R_2)$-patched cycle $(q_1q_2 \cdots q_mq_1, S_1, \ldots, S_{R_1})$ such that
for each $j\in \{1, \ldots, R_1  \}$, $S_j\subseteq Y_j$.
Furthermore, since $\omega(G)\le q$, by Proposition~\ref{prop:intermediatesimple}, 
$G$ contains a simple $(2, M(n))$-patched cycle $(q_1q_2 \cdots q_mq_1, S_a', S_b')$ such that 
$S_a'\subseteq S_a$ and $S_b'\subseteq S_b$ for some $a$ and $b$ with $1\le a<b\le
R_1$.
Note that 
\begin{itemize}
\item there are no edges between $\{q_1, q_2, \ldots, q_m\}$ and $Z_a\cup Z_b$, 
\item there are no edges between $Z_b$ and $S_a'$,
\item for each $x\in \{a,b\}$, every vertex of $S_x'$ has a neighbor in $Z_x$,
\item for each $x\in \{a,b\}$ and each vertex $v\in N_G(S_x')\cap Z_x$, there is a path $P$ from $v$ to $r_x$ in $G[Z_x]$ with $N_G(S_x')\cap V(P)=\{v\}$.
\end{itemize}
Therefore, by Proposition~\ref{prop:finalconfig2}, $G$ contains a vertex-minor isomorphic to $W_n$, which is contradiction.
\end{proof}

\end{document}